\numberwithin{equation}{section}
\newcommand{\DD}{\mathbb{D}}
\newcommand{\EE}{\mathbb{E}}
\newcommand{\FF}{\mathbb{F}}
\newcommand{\GG}{\mathbb{G}}
\newcommand{\HH}{\mathbb{H}}
\newcommand{\II}{\mathbb{I}}
\newcommand{\LL}{\mathbb{L}}
\newcommand{\NN}{\mathbb{N}}
\newcommand{\PP}{\mathbb{P}}
\newcommand{\QQ}{\mathbb{Q}}
\newcommand{\RR}{\mathbb{R}}
\newcommand{\SSS}{\mathbb{S}}
\newcommand{\TT}{\mathbb{T}}
\newcommand{\YY}{\mathbb{Y}}
\newcommand{\cB}{\mathcal{B}}
\newcommand{\cC}{\mathcal{C}}
\newcommand{\cD}{\mathcal{D}}
\newcommand{\cE}{\mathcal{E}}
\newcommand{\cF}{\mathcal{F}}
\newcommand{\cG}{\mathcal{G}}
\newcommand{\cH}{\mathcal{H}}
\newcommand{\cI}{\mathcal{I}}
\newcommand{\cL}{\mathcal{L}}
\newcommand{\cM}{\mathcal{M}}
\newcommand{\cN}{\mathcal{N}}
\newcommand{\cP}{\mathcal{P}}
\newcommand{\cQ}{\mathcal{Q}}
\newcommand{\cS}{\mathcal{S}}
\newcommand{\cT}{\mathcal{T}}
\newcommand{\cY}{\mathcal{Y}}
\newcommand{\cZ}{\mathcal{Z}}
\def\no{\noindent}
\def\ms{\medskip}
\def\q{\quad}
\def\dbD{\mathbb{D}}
\def\dbE{\mathbb{E}}
\def\dbF{\mathbb{F}}
\def\dbH{\mathbb{H}}
\def\dbN{\mathbb{N}}
\def\dbP{\mathbb{P}}
\def\dbR{\mathbb{R}}
\def\dbS{\mathbb{S}}
\def\dbQ{\mathbb{Q}}
\def\b{\beta}
\def\d{\delta}
\def\e{\varepsilon}
\def\om{\omega}
\newcommand{\ba}{\begin{array}} %%%MR : togliere gli array
\newcommand{\ea}{\end{array}}
\newcommand{\be}{\begin{equation}}
\newcommand{\ee}{\end{equation}}
\newcommand{\bea}{\begin{eqnarray}}
\newcommand{\eea}{\end{eqnarray}}
\newcommand{\beaa}{\begin{eqnarray*}}
\newcommand{\eeaa}{\end{eqnarray*}}
\newcommand{\Ind}{\mathbf{1}}
\DeclareMathOperator\sgn{\mathrm{sgn}}
\DeclareMathOperator*{\esssup}{ess\,sup}
\DeclareMathOperator*{\essinf}{ess\,inf}
\newtheorem{theorem}{Theorem}[section] 
\newtheorem{assumption}[theorem]{Assumption}
\newtheorem{corollary}[theorem]{Corollary}
\newtheorem{definition}[theorem]{Definition}
\newtheorem{lemma}[theorem]{Lemma}
\newtheorem{proposition}[theorem]{Proposition}
\theoremstyle{definition}
\newtheorem{remark}[theorem]{Remark}
\begin{document}

\title{Nonlinear predictable representation and $\LL^1$-solutions of backward SDEs and second-order backward SDEs}

\author{Zhenjie REN\footnote{CEREMADE, Universit\'e Paris Dauphine, F-75775 Paris Cedex 16, France, {\tt ren@ceremade.dauphine.fr}. }
        \and  Nizar TOUZI\footnote{CMAP, \'Ecole Polytechnique, F-91128 Palaiseau Cedex, France, {\tt nizar.touzi@polytechnique.edu}.}
        \and Junjian YANG\footnote{FAM, Fakult\"at f\"ur Mathematik und Geoinformation, Vienna University of Technology, A-1040 Vienna, Austria, {\tt junjian.yang@tuwien.ac.at}.}
       }

\date{\today}

\maketitle

\begin{abstract} 
The theory of backward SDEs extends the predictable representation property of Brownian motion to the nonlinear framework, thus providing a path-dependent analog of fully nonlinear parabolic PDEs. 
In this paper, we consider backward SDEs, their reflected version, and their second-order extension, in the context where the final data and the generator satisfy $\LL^1$-integrability condition. 
Our main objective is to provide the corresponding existence and uniqueness results for general Lipschitz generators. 
The uniqueness holds in the so-called Doob class of processes, simultaneously under an appropriate class of measures. 
We emphasize that the previous literature only deals with backward SDEs, and requires either that the generator is separable in $(y,z)$, see Peng \cite{Peng97}, or strictly sublinear in the gradient variable $z$, see Briand, Delyon, Hu, Pardoux \& Stoica  \cite{BDHPS03}, or that the final data satisfies an $L\ln{L}$-integrability condition, see Hu \& Tang \cite{HT18}. 
We bypass these conditions by defining $\LL^1$-integrability under the nonlinear expectation operator induced by the previously mentioned class of measures.

\vspace{3mm}

\begin{center}
	\textbf{Résumé}
\end{center}

La th\'eorie des \'equations diff\'erentielles stochastiques r\'etrogrades \'etend la propri\'et\'e de repr\'esentation pr\'evisible du mouvement brownien au cadre non lin\'eaire, offrant ainsi un analogue non-markovien aux \'equations aux d\'eriv\'ees partielles paraboliques compl\`etement non lin\'eaires. 
Dans ce papier, nous consid\'erons les EDS r\'etrogrades, leur version r\'efl\'echies et son extension au second ordre, dans le contexte d'une donn\'ee terminale et d'un g\'en\'erateur $\mathbb{L}^1$-int\'egrables. 
Notre objectif est d'\'etablir un r\'esultat d'existence et d'unicit\'e pour un g\'en\'erateur Lipschitzien. Nous montrons que l'unicit\'e a lieu dans la classe des processus de Doob, simultan\'ement sous une classe appropri\'ees de mesures de probabilit\'e sur l'espace des trajectoires. 
Notons que ce r\'esultat est nouveau, m\^eme dans le cas particulier des EDS r\'etrogrades, o\`u la litt\'erature pr\'ec\'edente \'etablit l'unicit\'e pour des g\'en\'erateurs, soit s\'eparables en $(y,z)$ (Peng \cite{Peng97}), soit strictement sous-lin\'eaires en la variable de gradient $z$, (Briand, Delyon, Hu, Pardoux \& Stoica \cite{BDHPS03}), ou alors sous une condition d'int\'egrabilit\'e $L\ln{L}$ (Hu \& Tang \cite{HT18}). 
Nous \'evitons de recourir \`a de telles conditions en introduisant l'int\'egrabilit\'e $\mathbb{L}^1$ sous l'op\'erateur d'esp\'erance non lin\'eaire induit par la classe ci-dessus de mesures de probabilit\'e.

\end{abstract}

\noindent
\textbf{MSC 2010 Subject Classification:} 60H10 \newline
\vspace{-0.2cm}\newline
\noindent
% \textbf{JEL Classification Codes:} G11, C61\newline
% \vspace{-0.2cm}\newline
\noindent
\textbf{Key words:} Backward SDE, second-order backward SDE, nonlinear expectation, nondominated probability measures.  

%\tableofcontents

\section{Introduction}

Backward stochastic differential equations extend the martingale representation theorem to the nonlinear setting. It is well-known that the martingale representation theorem is the path-dependent counterpart of the heat equation. 
Similarly, it has been proved in the seminal paper of Pardoux and Peng \cite{PP90} that backward SDEs provide a path-dependent substitute to semilinear PDEs. Finally, the path-dependent counterpart of parabolic fully nonlinear parabolic PDEs was obtained by Soner, Touzi \& Zhang \cite{STZ12} and later by Hu, Ji, Peng \& Song \cite{HJPS14a, HJPS14b}. 
The standard case of a Lipschitz nonlinearity (or generator), has been studied extensively in the literature, the solution is defined on an appropriate $\LL^p$-space for some $p>1$, and wellposedness is guaranteed whenever the final data and the generator are $\LL^p$-integrable. 

In this paper, our interest is on the limiting $\LL^1$-case. 
It is well-known that the martingale representation, which is first proved for square integrable random variables, holds also in $\LL^1$ by a density argument. 
This is closely related to the connexion with the conditional expectation operator. 

The first attempt for an $\LL^1$-theory of backward SDEs is by Peng \cite{Peng97} in the context of a separable nonlinearity $f_1(t,y)+f_2(t,z)$, Lipschitz in $(y,z)$, with $f_1(t,0)=0$, $f_2(t,0)\geq 0$, and final data $\xi\geq 0$. 
The wellposedness result of this paper is specific to the scalar case, and follows the lines of the extension of the expectation operator to $\LL^1$.

Afterwards, Briand, Delyon, Hu, Pardoux \& Stoica \cite{BDHPS03} consider the case of multi-dimensional backward SDEs, and obtain a wellposedness result in $\LL^1$ by using a truncation technique leading to a Cauchy sequence. 
This approach is extended by Rozkosz \& S{\l}omi{\'n}ski \cite{RS12} and Klimsiak \cite{kli12} to the context of reflected backward SDEs.
However, the main result of these papers requires the nonlinearity to be strictly sublinear in the gradient variable. In particular, this does not cover the linear case, whose unique solution is immediately obtained by a change of measure. More generally, the last restriction excludes the nonlinearities generated by stochastic control problems (with uncontrolled diffusion), which is a substantial field of application of backward SDEs, see El Karoui, Peng \& Quenez \cite{EKPQ97} and Cvitani\'c, Possama\"{\i} \& Touzi \cite{CPT18}. 

We finally refer to the recent work series by Buckdahn, Fan, Hu and Tang \cite{HT18,BHT18,FH19} who provide an $L\exp(\mu\sqrt{2\log(1+L)})$-integrability condition which guarantees the wellposedness in $\LL^1$ of the backward SDE for a Lipschitz nonlinearity.

In this paper, we consider an alternative integrability class for the solution of the backward SDE by requiring $\LL^1$-integrability under a nonlinear expectation induced by an appropriate family of probability measures. In the context of a Lipschitz nonlinearity, the first main result of this paper provides wellposedness of the backward SDE for a final condition and a nonlinearity satisfying a uniform integrability type of condition under the same nonlinear expectation. This result is obtained by appropriately adapting the arguments of \cite{BDHPS03}. Although all of our results are stated in the one-dimensional framework, we emphasize that the arguments used for the last wellposedness results are unchanged in the multi-dimensional context. 

We also provide a similar wellposedness result for (scalar) reflected backward SDEs, under the same conditions as for the corresponding backward SDE, with an obstacle process whose positive value satisfies the same type of uniform integrability under nonlinear expectation. This improves the existence and uniqueness results of \cite{RS12, kli12}.

Our third main result is the wellposedness of second order backward SDEs in $\LL^1$. Here again, the $\LL^1$-integrability is in the sense of a nonlinear expectation induced by a family of measures. In the present setting, unlike the case of backward SDEs and their reflected version, the family of measures is non-dominated as in Soner, Touzi \& Zhang \cite{STZ12} and Possama\"{\i}, Tan and Zhou \cite{PTZ18}.
 
 \vspace{3mm}
 
The paper is organized as follows. Section \ref{sect:preliminaries} introduces the notations used throughout the paper. Our main results are contained in Section \ref{sect:mainresults}, with proofs postponed in the rest of the paper. Section \ref{sect:RBSDE} contains the proofs related to (reflected) backward SDEs, and Sections \ref{sect:2BSDEuniqueness} and \ref{sect:2BSDEexistence} focus on the uniqueness and the existence, respectively, for the second-order backward SDEs.

\section{Preliminaries}
\label{sect:preliminaries}

\subsection{Canonical space}

For a given fixed maturity $T>0$ and $d\in\NN$, we denote by 
  $$ \Omega := \left\{\omega\in\cC\big([0,T];\RR^d\big)\,:\, \omega_0=\bf 0 \right\} $$
 the canonical space equipped with the norm of uniform convergence $\|\omega\|_{\infty}:=\sup_{0\leq t\leq T}|\omega_t|$ and by $B$ the canonical process.
Let $\cM_1$ be the collection of all probability measures on $(\Omega,\cF)$, equipped with the topology of weak convergence. 
Let $\FF:=\{\cF_t\}_{0\leq t\leq T}$ be the raw filtration generated by the canonical process $B$, and $\FF^{+}:=\big\{\cF^{+}_t\big\}_{0\leq t\leq T}$ the right limit of $\FF$. 
For each $\PP\in\cM_1$, we denote by $\FF^{+,\PP}$ the augmented filtration of $\FF^+$ under $\PP$.
The filtration $\FF^{+,\PP}$ is the coarsest filtration satisfying the usual conditions. 
Let $\dbP_0$ be the Wiener measure. 
% Finally, for $0\le s\le t\le T$, we denote by $\cT_{s,t}$ the collection of all $[s,t]$-valued $\dbF-$stopping times.
Moreover, for $\cP\subseteq\cM_1$, we introduce the following filtration $\FF^U:=(\cF^U_t)_{0\leq t\leq T}$, $\FF^\cP:=(\cF_t^\cP)_{0\leq t\leq T}$, and $\FF^{+,\cP}:=\big(\cF_t^{+,\cP}\big)_{0\leq t\leq T}$, 
  defined as follows
  $$ \cF^U_t:= \bigcap_{\PP\in\cM_1}\cF_t^\PP, \,\quad  \cF^\cP_t:= \bigcap_{\PP\in\cP}\cF_t^\PP, \quad \cF_t^{+,\cP}:=\cF^{\cP}_{t+}, \,\, t\in[0,T), \, \mbox{ and }\,\, \cF_T^{+,\cP}:=\cF_T^\cP, $$
  where $\cF_t^\dbP$ denotes the $\dbP$-completion of $\cF_t$. 
The filtration $\FF^U$ is called the universally completed filtration.
For any family $\cP\subseteq\cM_1$, we say that a property holds $\cP$-quasi-surely, abbreviated as $\cP$-q.s., if it holds $\dbP$-a.s.~for all $\dbP\in\cP$.
Finally, for $0\le s\le t\le T$, we denote by $\cT_{s,t}$ the collection of all $[s,t]$-valued $\dbF$-stopping times.

\subsection{Local martingale measures}

We say a probability measure $\dbP$ is a local martingale measure if the canonical process $B$ is a local martingale under $\dbP$. 
By \cite{Kar95}, we can pathwisely define a $d\times d$-matrix-valued $\FF$-progressively measurable process $\langle B\rangle$ which coincides with the cross-variation of $B$ under each local martingale measure $\PP$, 
  and its density by 
\beaa
 \widehat{a}_t:=\limsup_{\varepsilon\searrow 0}\frac{\langle B\rangle_t - \langle B\rangle_{t-\varepsilon}}{\varepsilon},
%  &\mbox{so that}&
%  \langle B\rangle_t = \int_0^t\widehat{a}_sds, ~t\in[0,T],~\PP\mbox{-a.s.}, ~\mbox{for all}~\PP.
\eeaa
Let $\overline{\cP}_W$ denote the set of all local martingale measures $\dbP$ 
  such that $\langle B\rangle_t$ is absolutely continuous in $t$ and $\widehat{a}$ takes values in $\SSS_d^{>0}$ (the set of $d\times d$ real valued symmetric positive-definite matrices) $\dbP$-a.s.
We note that, for different $\dbP_1$, $\dbP_2\in\overline{\cP}_W$, in general $\dbP_1$ and $\dbP_2$ are mutually singular. See \cite[Example 2.1]{STZ12} for an example. 
For any $\dbP\in\overline{\cP}_W$, it follows from the L\'evy characterization that the process defined by the following It\^o's integral 
   $$ W^\dbP_t:=\int_0^t\widehat{a}_s^{-1/2}dB_s, \quad t\in[0,T],\quad \dbP\mbox{-a.s.} $$
   is a Brownian motion under $\dbP$. 
We define a subset $\overline{\cP}_S\subseteq\overline{\cP}_W$ consisting of all probability measures 
  $$ \dbP^\alpha:=\dbP_0\circ(X^\alpha)^{-1}, \quad \mbox{where}~~~X^\alpha_t:=\int_0^t\alpha_s^{1/2}dB_s,~~t\in[0,T],~~\dbP_0\mbox{-}a.s. $$
  for some $\dbF$-progressively measurable process $\alpha$ taking values in $\SSS_d^{>0}$ with $\int_0^T|\alpha_s|dt<\infty$, $\dbP_0$-a.s. 
It follows from \cite{STZ11a} that 
  $$ \overline{\cP}_S = \left\{\dbP\in\overline{\cP}_W: \overline{\dbF^{W^\dbP}}^\dbP=\overline{\dbF}^\dbP\right\}, $$
  where $\overline{\dbF}^\dbP$ ($\overline{\dbF^{W^\dbP}}^\dbP$, respectively) denotes the $\dbP$-augmentation of the right-limit filtration generated by $B$ (by $W^\dbP$, respectively). 
Moreover, every $\dbP\in\overline{\cP}_S$ satisfies the Blumenthal zero-one law and the martingale representation property. 

We also recall that, for $\dbP\in\overline{\cP}_S$, it follows from the Blumenthal zero-one law that 
 $\dbE^\dbP[\xi|\cF_t]=\dbE^\dbP[\xi|\cF_t^+]$, $\dbP$-a.s., for any $t\in[0,T]$ and $\dbP$-integrable $\xi$. In particular, any $\cF_t^+$-measurable random variable has an $\cF_t$-measurable $\dbP$-modification. 
We refer the reader to \cite[Section 6]{STZ11a} for more details about $\overline{\cP}_S$ and $\overline{\cP}_W$. 

\subsection{Spaces and norms}\label{sec:space}

\textbf{(i)} \textit{One-measure integrability classes}: 
For a probability measure $\PP$ and $p>0$, we denote: 
\begin{itemize}
 \item $\LL^p(\PP)$ is the space of $\RR$-valued and $\cF^{+,\PP}_T$-measurable random variables $\xi$, such that 
                $$ \|\xi\|_{\LL^p(\PP)} := \EE^{\PP}\left[|\xi|^p\right]^{1\wedge\frac{1}{p}} <\infty. $$
 \item $\SSS^p(\PP)$ is the space of $\RR$-valued, $\FF^{+,\PP}$-adapted processes $Y$ with c\`adl\`ag paths, such that 
                $$ \|Y\|_{\SSS^p(\PP)} := \EE^{\PP}\left[\sup_{0\leq t\leq T}|Y_t|^p\right]^{1\wedge\frac{1}{p}} <\infty. $$
 \item $\HH^p(\PP)$ is the space of $\RR^d$-valued, $\FF^{+,\PP}$-progressively measurable processes $Z$ such that 
                $$ \|Z\|_{\HH^p(\PP)}:= \EE^{\PP}\left[\left(\int_0^T\left|\widehat\sigma_s^{\intercal}Z_s\right|^2ds\right)^{\frac{p}{2}}\right]^{1\wedge\frac{1}{p}} <\infty. $$
 \item $\II^p(\PP)$ is the set of $\RR$-valued, $\FF^{+,\PP}$-predictable processes $K$ of bounded variation with c\`adl\`ag nondecreasing paths, such that
           $$ \|K\|_{\II^p(\PP)} := \EE^{\PP}\left[K_T^p\right]^{1\wedge\frac{1}{p}} <\infty. $$
 \end{itemize}

\no The spaces above are Banach spaces with corresponding norms for $p\geq 1$ and complete metric spaces with corresponding metrics if $p\in(0,1)$. 
A process $Y$ is said to be of class $\DD(\PP)$ if the family $\{Y_\tau, \,\tau\in\cT_{0,T}\}$ is uniformly integrable under $\PP$. 
% Here, we denote by $\cT_{0,T}$ the set of all $[0,T]$-valued stopping times. 
We define the norm 
 \begin{equation*}
   \|Y\|_{\DD(\PP)}:=\sup_{\tau\in\cT_{0,T}}\EE^\PP[|Y_\tau|]. 
 \end{equation*}
The space of progressive measurable c\`adl\`ag processes which belong to class $\DD(\PP)$ is complete under this norm. 
See Theorem \cite[VI Theorem 22, Page 83]{DM82}.

\vspace{4mm}

\noindent \textbf{(ii)} \textit{Integrability classes under dominated nonlinear expectation}: 
Let us denote by $\widehat\sigma$ the measurable square root of $\widehat a_t$, i.e., $\widehat\sigma_t:=\widehat a_t^{1/2}$. We recall that, for each $\dbP\in\overline{\cP}_S$, 
  $ W^\dbP_t := \int_0^t\widehat{\sigma}_s^{-1}dB_s $
  is a $\dbP$-Brownian motion. 
Denote by $\cQ_{L}(\PP)$ the set of all probability measures $\QQ^{\lambda}$ such that
   $$ \frac{d\QQ^\lambda}{d\PP}\bigg|_{\cF_t} = G^\lambda_t := \exp\left\{\int_0^t\lambda_s\cdot dW^\dbP_s - \frac{1}{2}\int_0^t|\lambda_s|^2ds\right\}, \quad t\in[0,T],  $$
   for some $\FF^{+,\PP}$-progressively measurable process $(\lambda_t)_{0\leq t\leq T}$ bounded uniformly by $L$.
It is straightforward to check that the set $\cQ_L(\PP)$ is stable under concatenation, i.e., 
 for $\QQ_1$, $\QQ_2\in\cQ_L(\PP)$, $\tau\in\cT_{0,T}$, we have $\QQ_1\otimes_\tau\QQ_2\in\cQ_L(\PP)$, where 
   \begin{align*}
     \QQ_1\otimes_\tau\QQ_2(A):= \EE^{\QQ_1}\Big[\EE^{\QQ_2}\big[\Ind_A|\cF^{+,\dbP}_\tau\big]\Big], \quad A\in\cF^{+,\dbP}_T.
   \end{align*}
It is clear from Girsanov's Theorem that under a measure $\QQ^\lambda\in\cQ_{L}(\PP)$, the process $W_t^{\lambda}:=W_t^\dbP-\int_0^{t}\lambda_sds$ is a Brownian motion under $\QQ^\lambda$.
Thus, $B_t^\lambda:=B_t-\int_0^t\widehat{\sigma}_t\lambda_tdt$ is a $\QQ^\lambda$-martingale. 
%It is straightforward to check that the set $\cQ_{L}(\PP)$ is stable under concatenation. 
Given a $\PP\in\overline{\cP}_S$, we denote 
   $$ \cE^{\PP}[\xi]:= \sup_{\QQ\in\cQ_L(\PP)}\EE^\QQ[\xi], $$
  and introduce the space $\cL^p(\PP)=\bigcap_{\QQ\in\cQ_L(\PP)}\LL^p(\QQ)$ of $\cF_T^{+,\dbP}$-measurable random variables $\xi$ such that 
    $$ \|\xi\|_{\cL^p(\PP)} := \cE^{\PP}\left[|\xi|^p\right]^{1\wedge\frac{1}{p}} <\infty.  $$
We define similarly the subspaces $\cS^p(\PP)$, $\cH^p(\PP)$ and the subsets $\cI^p(\PP)$.

A  process $Y$ belongs to $\cD(\PP)$ if $Y$ is progressive measurable and c\`adl\`ag, and  the family $\{Y_\tau, \,\tau\in\cT_{0,T}\}$ is uniformly integrable under $\cQ_L(\PP)$, i.e., 
 $ \lim_{N\to\infty}\sup_{\tau\in\cT_{0,T}}\cE^{\PP}\left[|Y_\tau|\Ind_{\{|Y_\tau|\geq N\}}\right] = 0. $
We define the norm 
\beaa
\|Y\|_{\cD(\PP)}:=\sup_{\tau\in\cT_{0,T}}\cE^\PP[|Y_\tau|]. 
\eeaa
Note that $ \|Y\|_{\cD(\PP)}<\infty$ does not imply $Y\in \cD(\PP)$. However, the space $\cD(\PP)$ is complete under this norm. See Theorem \ref{AppThm1}.

\vspace{4mm}

\noindent \textbf{(iii)} \textit{Integrability classes under non-dominated nonlinear expectation}: 
 Let $\cP\subseteq\overline{\cP}_S$ be a subset of probability measures, and denote
  $$ \cE^\cP[\xi]:=\sup_{\PP\in\cP}\cE^\PP[\xi].  $$ 
%  We define the subspace $\cL^p(\cP, \cF_T)$ as the collection of all $\cF_T$-measurable $\mathbb{R}$-valued random variables $\xi$, such that 
%      $$ \|\xi\|_{\cL^p(\cP)} := \cE^{\cP}\left[|\xi|^p\right]^{1\wedge\frac{1}{p}} <\infty. $$
% We define similarly the subspaces $\cS^p(\cP, \dbF^+)$ and $\cH^p(\cP, \dbF^+)$ by replacing $\mathbb{F}^{\PP}$ with $\mathbb{dbF}^+$.
% Similarly, we denote by $\cD(\cP,\dbF^+)$ the space of $\RR$-valued, $\dbF^+$-adapted processes $Y$ with c\`adl\`ag paths, such that 
%      $ \lim_{N\to\infty}\sup_{\tau\in\cT_{0,T}}\cE^{\cP}\big[|Y_\tau|\Ind_{\{|Y_\tau|\geq N\}}\big] = 0. $
% {\color{red}(Ici, on va verifier la filtration. Pas de completeness ? A cause de la continuit\'e ?)}
% 
 Let $\GG:=\{\cG_t\}_{0\leq t\leq T}$ be a filtration with $\cG_t\supseteq\cF_t$ for all $0\leq t\leq T$. 
 We define the subspace $\cL^p(\cP, \cG_T)$ as the collection of all $\cG_T$-measurable $\mathbb{R}$-valued random variables $\xi$, such that 
     $$ \|\xi\|_{\cL^p(\cP)} := \cE^{\cP}\left[|\xi|^p\right]^{1\wedge\frac{1}{p}} <\infty. $$
 We define similarly the subspaces $\cS^p(\cP, \GG)$ and $\cH^p(\cP, \GG)$ by replacing $\mathbb{F}^{\PP}$ with $\mathbb{G}$.
 Similarly, we denote by $\cD(\cP,\GG)$ the space of $\RR$-valued, $\GG$-adapted processes $Y$ with c\`adl\`ag paths, such that 
                $$ \lim_{N\to\infty}\sup_{\tau\in\cT_{0,T}}\cE^{\cP}\big[|Y_\tau|\Ind_{\{|Y_\tau|\geq N\}}\big] = 0. $$

\subsection{Shifted space}

 Here, we introduce the following notations:
 \begin{itemize}
  \item For $t\in[0,T]$ and $\omega$, $\omega'\in\Omega$, define the concatenation path $\omega\otimes_t\omega'\in\Omega$ by 
          $$ (\omega\otimes_t\omega')_s:=\omega_s\Ind_{[0,t)}(s) + (\omega_t+\omega'_{s-t})\Ind_{[t,T]}(s), \quad s\in[0,T]. $$
  \item For $(t,\omega)\in\Omega\times[0,T]$, we define the shifted random variable 
          $$ \xi^{t,\omega}(\omega'):=\xi(\omega\otimes_t\omega'), \quad \forall \omega'\in\Omega. $$
        If $\xi$ is $\cF_{t+s}$-measurable, then $\xi^{t,\omega}$ is $\cF_s$-measurable. 
  \item For an $\FF$-stopping time $\tau$ and $(t,\omega)\in\llbracket 0,\tau\llbracket$, define $\theta:=\tau^{t,\omega}-t.$
        One may show that $\theta$ is a $\FF$-stopping time.
  \item Define the shifted process 
          $$ Y^{t,\omega}_s(\omega'):= Y_{t+s}(\omega\otimes_t\omega'). $$
        In particular, for the canonical process $X$, we have 
          $$ X^{t,\omega}_s(\omega')= X_{t+s}(\omega\otimes_t\omega') = (\omega\otimes_t\omega')_{t+s} = \omega_t + \omega'_s, \quad s\in[0,T-t]. $$
  \item For $\FF$-stopping time $\tau$, we define 
          $$ \omega\otimes_{\tau}\omega':=\omega\otimes_{\tau(\omega)}\omega', \quad \xi^{\tau,\omega}:=\xi^{\tau(\omega),\omega}, \quad Y^{\tau,\omega}:=Y^{\tau(\omega),\omega}. $$ 
 \end{itemize}

 For every probability measure $\PP$ on $\Omega$ and $\FF$-stopping time $\tau$, there exists a family of regular conditional probability distribution (for short r.c.p.d.) $(\PP^\tau_\omega)_{\omega\in\Omega}$ satisfying 
 \begin{itemize}
  \item For each $\omega\in\Omega$, $\PP^\tau_\omega$ is a probability measure on $(\Omega,\cF_T)$.
  \item For each $A\in\cF_T$, the mapping $\omega\mapsto\PP^\tau_\omega(A)$ is $\cF_\tau$-measurable. 
  \item For $\PP$-a.e.~$\omega\in\Omega$, $\PP^\tau_\omega$ is the conditional probability measure of $\PP$ on $\cF_\tau$, i.e., for every integrable $\cF_T$ random variable $\xi$ we have 
          $$ \EE^\PP[\xi|\cF_\tau](\omega)=\EE^{\PP^\tau_\omega}[\xi], \quad \mbox{ for  }\PP\mbox{-a.e. } \omega\in\Omega. $$
  \item For every $\omega\in\Omega$, $\PP^\tau_\omega(\Omega^\omega_\tau)=1$, where 
          $$ \Omega^\omega_\tau := \left\{\overline\omega\in\Omega\,|\,\overline{\omega}_s = \omega_s, \, 0\leq s\leq\tau(\omega)\right\}. $$
 \end{itemize}
 
The r.c.p.d.~$\PP^{\tau}_\omega$ induces naturally a probability measure $\PP^{\tau,\omega}$ on $(\Omega,\cF_T)$ defined by 
  $$ \PP^{\tau,\omega}(A):=\PP_\omega^\tau(\omega\otimes_\tau A), \quad A\in\cF_T, \quad \mbox{where }\, \omega\otimes_\tau A:=\{\omega\otimes_\tau\omega'\,:\,\omega'\in A\}. $$
It is clear that for every $\cF_T$-measurable random variable $\xi$, we have
  $$ \EE^{\PP^{\tau}_\omega}[\xi] = \EE^{\PP^{\tau,\omega}}[\xi^{\tau,\omega}]. $$

\section{Main results}
\label{sect:mainresults}

Throughout this paper, we fix a finite time horizon $0<T<\infty$. Let $\xi$ be an $\cF_T$-measurable random variable, and $F:[0,T]\times\Omega\times\RR\times\RR^d\times\SSS^{d}\to\RR$, a Prog$\otimes\cB(\RR)\otimes\cB(\RR^d)\otimes\cB\big(\SSS^{d}\big)$-measurable map,\footnote{We denote by Prog the $\sigma$-algebra generated by progressively measurable processes.} called generator, and denote
   $$ 
   f_t(\omega,y,z)
   := 
   F_t\big(\omega,y,z,\widehat{\sigma}_t(\omega)\big), 
   \quad (t,\omega,y,z)\in[0,T]\times\Omega\times\RR\times\RR^d.
   $$
By freezing the pair $(y,z)$ to $0$, we set $f^0_t=f_t(0,0)$. 

\begin{assumption}\label{assum:BSDE.Lipschitz}
  The coefficient $F$ is uniformly Lipschitz in $(y, z)$ in the following sense: there exist constants $L_y$, $L_z\geq 0$, such that for all $(y_1, z_1)$, $(y_2, z_2)\in \mathbb{R}\times\RR^d$ and $\sigma\in\SSS^{d}$,
          $$ \big|F_s(y_1, z_1,\sigma)-F_s(y_2, z_2,\sigma)\big|\leq L_y|y_1-y_2|+L_z\big|\sigma^{\intercal}(z_1-z_2)\big|, \quad ds\otimes d\mathbb{P}\mbox{-a.s.} $$
\end{assumption}

\begin{remark} \label{rem:monotonicity}
Without loss of generality, we may assume that $F$ is nonincreasing in $y$. Indeed, we may always reduce to this context by using the standard change of variable $(\widetilde Y_t,\widetilde Z_t):=e^{at}(Y_t,Z_t)$ for sufficiently large $a$. 
\end{remark}
 
\subsection{$\LL^1$-solution of backward SDE} \label{subsection:BSDE}

For a probability measure $\PP\in\overline{\cP}_S$, consider the following backward stochastic differential equation (BSDE): 
   \begin{equation} \label{eq:BSDE}
     Y_t = \xi+\int_{t}^T f_s(Y_s,Z_s)ds - \int_{t}^TZ_s\cdot dB_s, \quad t\in[0,T],\quad \PP\mbox{-a.s.}
   \end{equation}
Here, $Y$ is a c\`adl\`ag adapted $\RR$-valued process and $Z$ is a progressively measurable $\RR^d$-valued process. 
Recall that $dB_s=\widehat{\sigma}_sdW_s^\dbP$, $\PP$-a.s.~for some $\PP$-Brownian motion $W^\dbP$.

We shall use the Lipschitz constant $L_z$ of Assumption \ref{assum:BSDE.Lipschitz} as the bound of the coefficients of the Girsanov  transformations introduced in Section \ref{sec:space} (ii). In particular, we denote $$ \cE^\PP[\xi]:= \sup_{\QQ\in\cQ_{L_z}(\PP)}\EE^\QQ[\xi]. $$

\begin{assumption} \label{assum:BSDExi}
	Suppose that 
 $$   \lim_{n\to\infty}\cE^{\PP}\left[|\xi|\Ind_{\{|\xi|\geq n\}} + \int_0^T\big|f_s^0\big|\Ind_{\{|f^0_s|\geq n\}}ds\right] = 0, $$
  where $\cE^{\PP}$ is defined above with some Lipschitz constant for $F$ in $z$. 
\end{assumption}

\begin{remark}
 Note that for the linear expectation, the integrability of a random variable $\xi$ is equivalent to $\lim_{n\to\infty}\dbE\big[|\xi|\Ind_{\{\xi\geq n\}}\big]=0$. 
 However, this equivalence does not hold true any more for nonlinear expectations, see Proposition \ref{AppProp1}, where we need the condition $(b)$ in order to get the equivalence. 
 Such an assumption is usual in the nonlinear expectation framework.
\end{remark}
 
\begin{remark}[Comparison with Hu \& Tang \cite{HT18} and Buckdahn, Hu \& Tang \cite{BHT18}]
 Under the previous Lipschitz condition, and for $\mu>L_z\sqrt{T}$, let $\psi(x):= x\exp\big(\mu\sqrt{2\log(1+x)}\big)$ for $x\in[0,\infty)$, and assume that 
  $\psi\Big(|\xi|+\int_0^T|f^0_s|ds\Big)\in \LL^1(\dbP)$. 
 Then, \cite{HT18} and \cite{BHT18} proved that the BSDE \eqref{eq:BSDE} has a unique solution with $\{\psi(|Y_t|),0\leq t\leq T\}\in\dbD(\dbP)$. 
 They show in \cite[Remark 1.2]{HT18} that the $\psi$-integrability is stronger than $\LL^1$, weaker than $\LL^p$ for any $p>1$. 
 We now show our integrability assumption is weaker than their, i.e., $\psi(|\xi|)\in\LL^1(\dbP)$ implies $\lim_{n\to\infty}\cE^\dbP\big[|\xi|\Ind_{\{|\xi|\geq n\}}\big]=0$. 
%  
%  
%  Recently, Hu and Tang \cite{HT18} and Buckdahn, Hu, and Tang \cite{BHT18} studied the existence and uniqueness of solution to scalar BSDEs with $L\exp(\mu\sqrt{2\log(1+L)})$-integrable terminal values. 
%  More precisely, define the function $\psi$
%   $$ \psi(x,\mu) := x\exp\left(\mu\sqrt{2\log(1+x)}\right), \quad (x,\mu)\in[0,\infty)\times[0,\infty). $$
%  Under the usual Lipschitz condition with parameters $L_y$ and $L_z$ and the $\psi$-integrability condition for $\xi$ and $f^0_s$, i.e., 
%   $$ \psi\left(|\xi|+\int_0^T|f^0_s|ds,\mu\right)\in L^1(\dbP), \quad \mbox{with}~~\mu> L_z\sqrt{T}=:\mu_0, $$
%   the BSDE 
%    $$ Y_t = \xi + \int_t^Tf_s(Y_s,Z_s)ds - \int_t^TZ_s\cdot dW_s, \quad t\in[0,T], $$
%    admit a unique solution $(Y,Z)$ such that $(\psi(|Y_t|,\mu))$ belongs to class (D). 
%  They show in \cite[Remark 1.2]{HT18} that the $\psi$-integrability is stronger than $\LL^1$, weaker than $\LL^p$ for any $p>1$.
%  In the present paper, Assumption \ref{assum:BSDExi} is also a condition between the classical $\LL^1$ and $\LL^p$-integrability conditions. 
%  We may actually show that $\psi$-integrability of $\xi$ implies Assumption \ref{assum:BSDExi} for $\xi$. 
 
 To see this, let $\dbQ^\lambda\in\cQ_L(\dbP)$ be arbitrary. By the inequality,  
  $e^xy\leq e^{\frac{x^2}{2\mu^2}}+e^{2\mu^2}\psi(y)$ with $x,y\geq 0$, see \cite[Lemma 2.4]{HT18},
%    $$ e^xy\leq e^{\frac{x^2}{2\mu^2}}+e^{2\mu^2}\psi(y,\mu), \quad(x,\mu)\in[0,\infty)\times[0,\infty), $$
   we obtain 
    \begin{align*}
      \dbE^{\dbQ^\lambda}\left[|\xi|\Ind_{\{|\xi|\geq N\}}\right] 
         &= \dbE\left[|\xi|\Ind_{\{|\xi|\geq N\}}e^{\int_0^T\lambda_s\cdot dW_s -\frac{1}{2}\int_0^T|\lambda_s|^2ds}\right] \leq \dbE\left[|\xi|\Ind_{\{|\xi|\geq N\}}e^{\int_0^T\lambda_s\cdot dW_s}\right] \\
         &\leq \dbE\left[\Ind_{\{|\xi|\geq N\}}e^{\frac{1}{2\mu^2}(\int_0^T\lambda_s\cdot dW_s)^2}\right] + e^{2\mu^2}\dbE\left[\Ind_{\{|\xi|\geq N\}}\psi(|\xi|)\right].
    \end{align*}
 The second term is independent of $\dbQ^\lambda$ and converges to $0$ as $N\to\infty$, by the $\psi$-integrabiliy of $\xi$. 
 For the first term, following the proof of \cite[Lemma 2.6, Theorem 2.7]{HT18}, we obtain that
   \begin{align*}
     \dbE\left[\Ind_{\{|\xi|\geq N\}}e^{\frac{1}{2\mu^2}(\int_0^T\lambda_s\cdot dW_s)^2}\right]
       & = \frac{1}{\sqrt{2\pi}}\int_{-\infty}^\infty \dbE\left[\Ind_{\{|\xi|\geq N\}}e^{\frac{y}{\mu}\int_0^T\lambda_s\cdot dW_s}\right]e^{-\frac{y^2}{2}}dy \\
       &\leq \frac{1}{\sqrt{2\pi}}\int_{-\infty}^\infty \dbE\left[\Ind_{\{|\xi|\geq N\}}^q\right]^{\frac{1}{q}}\dbE\left[e^{p\frac{y}{\mu}\int_0^T\lambda_s\cdot dW_s}\right]^{\frac{1}{p}}e^{-\frac{y^2}{2}}dy \\
       &\leq \dbE\left[\Ind_{\{|\xi|\geq N\}}\right]^{\frac{1}{q}} \frac{1}{\sqrt{2\pi}}\int_{-\infty}^\infty e^{\frac{1}{2}p\frac{y^2}{\mu^2}L^2_z T}e^{-\frac{y^2}{2}}dy \\
       &= \dbE\left[\Ind_{\{|\xi|\geq N\}}\right]^{\frac{1}{q}}\frac{1}{\sqrt{1-p\frac{L_z^2}{\mu^2}T}}, 
   \end{align*}
   which converges to $0$ as $N\to\infty$, with some $p>1$ close enough to 1 so that $\mu>pL_z\sqrt{T}$. 
 The assertion follows from the fact that the both terms on the right-hand side are independent of $\dbQ^\lambda$.
\end{remark}

We would also like to point out that Assumption \ref{assum:BSDExi} is closed to the condition (A4) in \cite{Bahlali2020} and the one in \cite[Proposition 6.1]{EH2011} which are used to establish the existence result of solutions to quadratic BSDEs.  

\begin{theorem} \label{thm:ExistenceUniquenessBSDE}
 Let Assumptions \ref{assum:BSDE.Lipschitz} and \ref{assum:BSDExi} hold true. 
 Then, the BSDE \eqref{eq:BSDE} has a unique solution $(Y,Z)\in \big(\cS^\beta(\PP)\cap \cD(\PP)\big) \times \cH^\beta(\PP)$ for all $\beta\in(0,1)$, with
 \begin{align} 
   \|Y\|_{\cD(\PP)}
      &\leq \cE^{\PP}\left[|\xi|+\int_0^T\big|f^0_s\big|ds\right], \label{BSDEEstimationEq1}\\
   \|Y\|_{\cS^\b(\PP)} + \|Z\|_{\cH^\b(\PP)}  
      &\leq C_{\beta,L,T}\left(\cE^{\PP}\big[|\xi|\big]^\beta+\cE^{\PP}\left[\int_0^T\big|f^0_s\big|ds \right]^{\beta}\right). \label{BSDEEstimationEq2}
  \end{align}
 for some constant $C_{\beta,L,T}$.
\end{theorem}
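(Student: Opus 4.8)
The overall plan is to transport the $\LL^1$-approximation method of \cite{BDHPS03} into the nonlinear-expectation setting, exploiting that the Lipschitz constant $L_z$ in the gradient variable is exactly the radius of the Girsanov family $\cQ_{L_z}(\PP)$ defining $\cE^\PP$. By Remark~\ref{rem:monotonicity} I assume $F$ nonincreasing in $y$. The cornerstone is a \emph{linearisation along the solution}: for any solution $(Y,Z)$ I write
\[ f_s(Y_s,Z_s)-f^0_s=\gamma_sY_s+\beta_s\cdot\widehat\sigma_s^{\bf T}Z_s, \]
with measurable difference quotients satisfying $\gamma_s\le0$, $|\gamma_s|\le L_y$ and $|\beta_s|\le L_z$. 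The measure $\QQ^\beta\in\cQ_{L_z}(\PP)$ built from $\lambda=\beta$ absorbs the $Z$-drift, so that, setting $\Gamma_{t,s}:=\exp(\int_t^s\gamma_u\,du)\in(0,1]$ and $\zeta:=|\xi|+\int_0^T|f^0_s|\,ds$,
\[ Y_t=\EE^{\QQ^\beta}\Big[\Gamma_{t,T}\,\xi+\int_t^T\Gamma_{t,s}\,f^0_s\,ds\ \Big|\ \cF_t\Big],\qquad\text{whence}\qquad |Y_\tau|\le\cE^\PP\big[\zeta\,\big|\,\cF_\tau\big] \]
for every $\tau\in\cT_{0,T}$, where $\cE^\PP[\,\cdot\,|\cF_\tau]:=\esssup_{\QQ\in\cQ_{L_z}(\PP)}\EE^\QQ[\,\cdot\,|\cF_\tau]$. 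Taking $\cE^\PP$ gives \eqref{BSDEEstimationEq1}.

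To obtain \eqref{BSDEEstimationEq2} I decouple the $Y$- and $Z$-bounds. For $Y$, the essential observation is that, by the stability of $\cQ_{L_z}(\PP)$ under concatenation, the envelope $t\mapsto\cE^\PP[\zeta\,|\,\cF_t]$ is a nonnegative $\QQ$-supermartingale \emph{simultaneously for every} $\QQ\in\cQ_{L_z}(\PP)$, with deterministic initial value $\cE^\PP[\zeta]$ (Blumenthal $0$--$1$ law). Combining the domination $|Y_t|\le\cE^\PP[\zeta\,|\,\cF_t]$ with the maximal inequality for nonnegative supermartingales at the exponent $\beta\in(0,1)$ gives, for each $\QQ$,
\[ \EE^\QQ\Big[\sup_{0\le t\le T}|Y_t|^\beta\Big]\le\frac{1}{1-\beta}\,\cE^\PP[\zeta]^\beta, \]
and a supremum over $\QQ$ bounds $\|Y\|_{\cS^\beta(\PP)}$ by the right-hand side of \eqref{BSDEEstimationEq2} (subadditivity of $x\mapsto x^\beta$). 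This bound is free of $Z$. For $Z$, I apply It\^o's formula to $|Y|^2$, use the splitting above together with Young's inequality to absorb $\tfrac14\int_0^T|\widehat\sigma_s^{\bf T}Z_s|^2ds$, raise to the power $\beta/2<1$ and take $\EE^\QQ$; the Burkholder--Davis--Gundy inequality applied to the $\QQ$-martingale part and one further Young step control the stochastic integral, yielding
\[ \|Z\|_{\cH^\beta(\PP)}\le C_{\beta,L,T}\Big(\cE^\PP\big[|\xi|^\beta\big]+\|Y\|_{\cS^\beta(\PP)}+\cE^\PP\Big[\Big(\int_0^T|f^0_s|\,ds\Big)^\beta\Big]\Big). \]
Inserting the $\cS^\beta$-bound and $\EE^\QQ[|\xi|^\beta]\le\cE^\PP[|\xi|]^\beta$ closes \eqref{BSDEEstimationEq2}.

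Existence follows by truncation. I set $\xi_n:=(\xi\wedge n)\vee(-n)$ and $f^{0,n}:=(f^0\wedge n)\vee(-n)$, keeping the same Lipschitz generator; the classical Lipschitz theory under the single measure $\PP\in\overline{\cP}_S$ (which has the martingale representation property) yields $(Y^n,Z^n)\in\SSS^2(\PP)\times\HH^2(\PP)$, and boundedness of the Girsanov densities places them in $\SSS^2(\QQ)\times\HH^2(\QQ)$ for all $\QQ\in\cQ_{L_z}(\PP)$, justifying the manipulations above. For $n\ge m$ the difference solves a BSDE with the same structure, terminal datum $\xi_n-\xi_m$ and free term $f^{0,n}-f^{0,m}$; since $|\xi_n-\xi_m|\le|\xi|\Ind_{\{|\xi|\ge m\}}$ and $|f^{0,n}_s-f^{0,m}_s|\le|f^0_s|\Ind_{\{|f^0_s|\ge m\}}$, estimates \eqref{BSDEEstimationEq1}--\eqref{BSDEEstimationEq2} together with Assumption~\ref{assum:BSDExi} show that $(Y^n)$ is Cauchy in $\cS^\beta(\PP)\cap\cD(\PP)$ and $(Z^n)$ is Cauchy in $\cH^\beta(\PP)$. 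By completeness of these spaces (Theorem~\ref{AppThm1} for $\cD(\PP)$) the limit $(Y,Z)$ exists, solves \eqref{eq:BSDE} after passing to the limit termwise, and inherits \eqref{BSDEEstimationEq1}--\eqref{BSDEEstimationEq2}. Uniqueness is immediate: the difference of two solutions in $(\cS^\beta(\PP)\cap\cD(\PP))\times\cH^\beta(\PP)$ solves a BSDE with zero data, so \eqref{BSDEEstimationEq2} forces it to vanish.

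The main obstacle is precisely the $\cS^\beta$-estimate. The naive It\^o/BDG route only delivers the coupled pair $\|Y\|_{\cS^\beta}\lesssim(\text{data})+\|Z\|_{\cH^\beta}$ and $\|Z\|_{\cH^\beta}\lesssim(\text{data})+\|Y\|_{\cS^\beta}$, which no absorption can decouple. The fix is to control $\sup_t|Y_t|$ through the supermartingale domination by $\cE^\PP[\zeta\,|\,\cF_\cdot]$ rather than through $Z$; the delicate point is that this envelope must be a supermartingale under \emph{every} $\QQ\in\cQ_{L_z}(\PP)$ at once, which is exactly where the concatenation-stability of the Girsanov family---hence the tower property of $\cE^\PP$---is indispensable, and what upgrades a single-measure estimate to the $\cS^\beta(\PP)=\sup_\QQ$ norm. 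A secondary care is that $Z\in\cH^\beta$ need not lie in $\cH^1$, so $\int Z\,dB$ is a genuine local martingale handled only through its $\beta/2$-moments; the It\^o computation must therefore be run along localising stopping times before passing to the limit.
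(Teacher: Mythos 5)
Your proposal is correct and, at its core, follows the same route as the paper: a priori estimates obtained by linearising the generator along the solution and absorbing the gradient term through a Girsanov change of measure inside $\cQ_{L_z}(\PP)$; a Doob-type maximal inequality at exponent $\beta<1$ whose uniformity over all $\QQ\in\cQ_{L_z}(\PP)$ rests on stability under concatenation; existence by truncating $(\xi,f^0)$, solving in $\LL^2$, and running the same estimates on differences to produce Cauchy sequences in $\cD(\PP)$, $\cS^\beta(\PP)$, $\cH^\beta(\PP)$; uniqueness from the stability estimate applied to the difference of two solutions. The differences are cosmetic: the paper linearises via Tanaka's formula and $\sgn(Y)$ rather than difference quotients; it packages the pasting argument as Lemmas \ref{techLemma1} and \ref{supDoob} (domination by a conditional expectation under a \emph{single} $\widehat\QQ\in\cQ_{L_z}(\PP)$, then concatenation), whereas you invoke the supermartingale property, under every $\QQ$ simultaneously, of the envelope $\cE^\PP[\zeta\,|\,\cF_\cdot]$ --- the same fact in different clothing; and the paper derives the BSDE case from its reflected BSDE theorem with $S=-\infty$ rather than directly.

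One justification needs repair: the Girsanov densities $G^\lambda_T$ are \emph{not} bounded (their exponent contains a stochastic integral), so ``boundedness of the Girsanov densities'' does not place $(Y^n,Z^n)$ in $\SSS^2(\QQ)\times\HH^2(\QQ)$; membership in $\SSS^2(\QQ)$ may in fact fail. Two correct substitutes, either of which suffices for the martingale and class-$\DD$ manipulations you perform: (i) since $\lambda$ is bounded, $G^\lambda_T$ has moments of every order, and H\"older gives $(Y^n,Z^n)\in\SSS^p(\QQ)\times\HH^p(\QQ)$ for every $p<2$ and every $\QQ\in\cQ_{L_z}(\PP)$; or (ii) because the truncated data are bounded by $n$, the linearised representation yields $\|Y^n\|_\infty\leq e^{L_yT}n(1+T)$, after which It\^o's formula and Burkholder--Davis--Gundy give $Z^n\in\HH^2(\QQ)$ for every $\QQ$. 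With this correction your argument is complete.
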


We also have the following comparison and stability results, which are direct consequences of the forthcoming Theorem \ref{thm:RBSDE.stability.comparison} and the estimates \eqref{BSDEEstimationEq1}-\eqref{BSDEEstimationEq2} of Theorem \ref{thm:ExistenceUniquenessBSDE}. 

\begin{theorem} \label{thm:BSDE.Stab.Comp}
 Let $(f,\xi)$ and $(f',\xi')$ satisfy the assumptions of Theorem \ref{thm:ExistenceUniquenessBSDE}, and $(Y,Z)$ and $(Y',Z')$ be the corresponding solutions.  
 \begin{enumerate}[$(i)$]
  \item \textnormal{Stability:} Denoting $\delta Y:=Y'-Y$, $\delta Y:=Z'-Z$ and $\delta\xi:=\xi'-\xi$, $\delta f:=f'-f$, we have for all $\beta\in(0,1)$, and some constant $C_{\beta,L,T}$:
  \begin{align*}
    \|\d Y\|_{\cD(\dbP)} 
       &\leq
       \cE^{\PP}\left[|\delta \xi|+\int_0^T\big|\delta f_s(Y_s,Z_s)\big|ds\right], \\
    \|\d Y\|_{\cS^\b(\PP)} + \| \d Z\|_{\cH^\b(\PP)} 
       &\leq
        C_{\beta,L,T}\left(\cE^{\PP}\big[|\delta\xi|\big]^\beta
                                     +\cE^{\PP}\left[\int_0^T|\delta f_s(Y_s,Z_s)|ds \right]^{\beta}
                     \right).       
  \end{align*}
  \item \textnormal{Comparison:} Suppose that $\xi\leq \xi'$, $\PP$-a.s., and $f(y,z)\leq f'(y,z)$, $dt\otimes\PP$-a.e., for all $(y,z)\in\RR\times\RR^d$.
                                 Then, $Y_\tau\leq Y'_\tau$, $\PP$-a.s., for all $\tau\in\cT_{0,T}$. 
 \end{enumerate}
\end{theorem}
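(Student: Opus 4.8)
The plan is to reduce both assertions to results already available, by recognising that the difference of the two solutions is itself the solution of a BSDE with a Lipschitz generator. Subtracting the two equations \eqref{eq:BSDE}, the pair $(\delta Y,\delta Z)$ solves the BSDE with terminal value $\delta\xi$ and generator $g_s(y,z):=f'_s(Y_s+y,Z_s+z)-f_s(Y_s,Z_s)$, since $g_s(\delta Y_s,\delta Z_s)=f'_s(Y'_s,Z'_s)-f_s(Y_s,Z_s)$ is exactly the integrand in the difference of the two equations. This $g$ inherits the Lipschitz constants $L_y,L_z$ from $f'$ by Assumption \ref{assum:BSDE.Lipschitz}, and its frozen value is $g^0_s=g_s(0,0)=\delta f_s(Y_s,Z_s)$. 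Because $(Y,Z)$ and $(Y',Z')$ already belong to the relevant spaces, $(\delta Y,\delta Z)$ is a bona fide solution, and I would simply apply the a priori estimates \eqref{BSDEEstimationEq1}--\eqref{BSDEEstimationEq2} of Theorem \ref{thm:ExistenceUniquenessBSDE} to this BSDE, with $(\xi,f^0)$ replaced by $(\delta\xi,\delta f(Y,Z))$. This delivers the two stability inequalities at once, the bounds being vacuous when the right-hand sides are infinite.

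For the comparison assertion I would invoke Theorem \ref{thm:RBSDE.stability.comparison}: a BSDE is the degenerate reflected BSDE whose obstacle is absent (the nondecreasing process $K\equiv 0$), so the reflected comparison principle specialises immediately, and $\xi\le\xi'$ together with $f(y,z)\le f'(y,z)$ yields $Y_\tau\le Y'_\tau$, $\PP$-a.s., for every $\tau\in\cT_{0,T}$. The underlying mechanism, which I would spell out if a self-contained argument were preferred, is linearisation: write $g_s(\delta Y_s,\delta Z_s)=a_s\delta Y_s+b_s\cdot\widehat\sigma_s^{\bf T}\delta Z_s+\delta f_s(Y_s,Z_s)$ with $|a_s|\le L_y$, $|b_s|\le L_z$ and, after the reduction of Remark \ref{rem:monotonicity}, $a_s\le 0$; absorb the $b$-term by the Girsanov change of measure $\QQ^b\in\cQ_{L_z}(\PP)$ under which $W^b:=W^\PP-\int_0^\cdot b_s\,ds$ is a Brownian motion; and use the bounded, positive integrating factor $\Gamma_s:=\exp(\int_0^s a_u\,du)$ to obtain the linear representation
\[ \Gamma_\tau\,\delta Y_\tau=\EE^{\QQ^b}\Big[\Gamma_T\,\delta\xi+\int_\tau^T\Gamma_s\,\delta f_s(Y_s,Z_s)\,ds\,\Big|\,\cF^{+,\PP}_\tau\Big]. \]
Since $\delta\xi\ge 0$ and $\delta f(Y,Z)\ge 0$ while $\Gamma>0$, the right-hand side is nonnegative, whence $\delta Y_\tau\ge 0$.

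The only genuine difficulty is the $\LL^1$-nature of the data: one must ensure that the local martingale underlying the representation is a true $\QQ^b$-martingale, so that conditioning is legitimate, and that the a priori estimates transfer to $(\delta Y,\delta Z)$ without re-imposing Assumption \ref{assum:BSDExi} on $(\delta\xi,\delta f(Y,Z))$. Both are handled within the class-$\cD(\PP)$ framework: $\delta Y\in\cD(\PP)$ as a difference of two class-$\cD(\PP)$ processes, and $\Gamma$ and $b$ are bounded, which together furnish the uniform integrability needed to pass from a localised martingale identity to the displayed representation. Consequently no new integrability hypothesis is required, and both conclusions follow from the cited results.
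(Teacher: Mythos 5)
Your proposal is correct and follows essentially the paper's own route: the paper obtains Theorem \ref{thm:BSDE.Stab.Comp} as a direct consequence of Theorem \ref{thm:RBSDE.stability.comparison} (specialized to the obstacle-free case, which is exactly your comparison step) together with the a priori estimates \eqref{BSDEEstimationEq1}--\eqref{BSDEEstimationEq2} applied through the difference equation with generator $g_s(y,z)=f'_s(Y_s+y,Z_s+z)-f_s(Y_s,Z_s)$, which is exactly your stability step. Your key observation---that these estimates are genuine a priori bounds valid for any class-$\cD(\PP)$ solution, so that Assumption \ref{assum:BSDExi} need not be re-imposed on $(\delta\xi,\delta f(Y,Z))$---is precisely what makes the reduction legitimate, and it rests on the same mechanism (Tanaka/sgn linearization, Girsanov with bounded kernel, and Lemmas \ref{techLemma1} and \ref{supDoob}) used in the paper's proof of Theorem \ref{thm:RBSDE.stability.comparison}$(i)$.
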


\subsection{$\LL^1$-solution of reflected backward SDE}

Consider the following reflected backward stochastic differential equation (RBSDE) 
  \begin{eqnarray} \label{eq:RBSDE}
    Y_{t} =\xi + \int_{t}^T f_s(Y_s,Z_s)ds - \int_{t}^T Z_s\cdot dB_s + \int_{t}^T dK_s, \quad t\in[0,T], \quad \PP-a.s.  
  \end{eqnarray}
 where $Z$ is a progressively measurable $\RR^d$-valued process, $K$ is a nondecreasing process starting from $K_0=0$, and $Y$ is a scalar c\`adl\`ag adapted process satisfying the following Skorokhod condition with c\`adl\`ag obstacle $(S_t)_{0\leq t\leq T}$:
      \beaa
      Y_t ~\ge~ S_t,
      &t\in[0,T],~~\mbox{and}&
      \int_0^T (Y_{t-}-S_{t-})dK_t = 0, 
      ~~\PP\mbox{-a.s.}
      \eeaa 

Our second wellposedness result is the following.

\begin{theorem}  \label{RBSDEmainthm}
 Let Assumptions \ref{assum:BSDE.Lipschitz} and \ref{assum:BSDExi} hold true. Assume that $S^+\in\cD(\PP)$. Then, the RBSDE \eqref{eq:RBSDE} has a unique solution $(Y,Z,K)\in \big(\cS^\beta(\PP)\cap \cD(\PP)\big) \times \cH^\beta(\PP)\times \cI^\beta(\PP)$ for all $\beta\in(0,1)$. 
\end{theorem}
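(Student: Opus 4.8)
The plan is to establish uniqueness by a linearization argument and existence by penalization, thereby reducing the reflected equation to a monotone family of backward SDEs to which Theorem \ref{thm:ExistenceUniquenessBSDE} and the comparison principle of Theorem \ref{thm:BSDE.Stab.Comp} can be applied directly. The nonlinear expectation $\cE^\PP$ and the Girsanov family $\cQ_{L_z}(\PP)$ play the same role as in the backward case, while the obstacle enters only through the Skorokhod sign condition.

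For uniqueness, let $(Y,Z,K)$ and $(Y',Z',K')$ be two solutions and write $\delta Y:=Y-Y'$, $\delta Z:=Z-Z'$, $\delta K:=K-K'$. Using the Lipschitz property I linearize the generator as $f_s(Y_s,Z_s)-f_s(Y'_s,Z'_s)=\alpha_s\,\delta Y_s+\beta_s\cdot\widehat\sigma_s^{\bf T}\delta Z_s$, with $\alpha_s\le 0$ (after the monotonicity normalization of Remark \ref{rem:monotonicity}) and $|\beta_s|\le L_z$. The process $\beta$ generates a Girsanov density with parameter bounded by $L_z$, so the associated measure $\QQ$ lies in $\cQ_{L_z}(\PP)$ and $B^\beta$ is a $\QQ$-martingale. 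The key is the Skorokhod sign condition: since $Y_{s-}\ge S_{s-}$, $Y'_{s-}\ge S_{s-}$ and $\int_0^T(Y_{s-}-S_{s-})\,dK_s=\int_0^T(Y'_{s-}-S_{s-})\,dK'_s=0$, one checks $\sgn(\delta Y_s)\,d\delta K_s\le 0$. Applying the Tanaka--Meyer formula to $|\delta Y|$ under $\QQ$, the drift contributed by $\alpha$, the reflection integral $\int_t^T\sgn(\delta Y_s)\,d\delta K_s$ and the local-time term are each nonpositive; taking $\QQ$-expectation, which is legitimate because $\delta Y$ is of class $\cD$ under $\cQ_{L_z}(\PP)$, yields $\EE^\QQ[|\delta Y_t|]\le 0$. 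Hence $\delta Y\equiv 0$, and then $\delta Z\equiv 0$ and $\delta K\equiv 0$.

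For existence I would penalize. Set $f^n_s(y,z):=f_s(y,z)+n(y-S_s)^-$, which is again Lipschitz with the same constant $L_z$ and nonincreasing in $y$, and whose frozen value is $f^{n,0}_s=f^0_s+nS_s^+$. Since $S^+\in\cD(\PP)$, the pair $(\xi,f^n)$ satisfies Assumption \ref{assum:BSDExi} for each fixed $n$, so Theorem \ref{thm:ExistenceUniquenessBSDE} furnishes a unique $(Y^n,Z^n)\in(\cS^\beta(\PP)\cap\cD(\PP))\times\cH^\beta(\PP)$, and I set $K^n_t:=\int_0^t n(Y^n_s-S_s)^-\,ds$. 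By the comparison principle (Theorem \ref{thm:BSDE.Stab.Comp}(ii)) the sequence $Y^n$ is nondecreasing, while an upper bound $Y^n\le\cY$ is supplied by any supersolution built from $\xi$, $f$ and the obstacle $S^+\in\cD(\PP)$, so that $Y^n\nearrow Y$ with $\sup_n\|Y^n\|_{\cD(\PP)}<\infty$.

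The heart of the argument is the uniform control of the increasing processes: writing $K^n_T=Y^n_0-\xi-\int_0^T f_s(Y^n_s,Z^n_s)\,ds+\int_0^T Z^n_s\cdot dB_s$ and taking nonlinear expectations, one must combine the Lipschitz bound on $f$, the $\cD(\PP)$-bounds on $Y^n$ and the local-martingale property of $\int Z^n\,dB$ to produce a bound on $\sup_n\cE^\PP[(K^n_T)^\beta]$. A $\beta$-energy estimate then controls $\|Z^n\|_{\cH^\beta(\PP)}$, and one extracts limits $Z^n\to Z$ in $\cH^\beta(\PP)$ and $K^n\to K$ in $\cI^\beta(\PP)$; passing to the limit in the penalized equation identifies $(Y,Z,K)$ as a candidate solution. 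It remains to verify the Skorokhod condition, namely $Y\ge S$ (which follows since $(Y^n-S)^-\to 0$) and $\int_0^T(Y_{t-}-S_{t-})\,dK_t=0$ (obtained from the penalized expression by a monotone-limit argument). I expect the main obstacle to be exactly this uniform a priori bound on $K^n_T$ under the sublinear expectation $\cE^\PP$, since only $\beta<1$ moments are available and the supremum runs over the whole family $\cQ_{L_z}(\PP)$; closely related is the final step of upgrading the weak limits to convergence strong enough in $\cS^\beta(\PP)$, $\cH^\beta(\PP)$ and $\cI^\beta(\PP)$ to preserve the minimality condition in the $\LL^1$-setting.
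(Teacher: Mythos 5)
Your uniqueness argument is essentially the paper's own: Tanaka/linearization with $\sgn(\delta Y)$, the Skorokhod sign inequality $\sgn(\delta Y_{s-})\,d\delta K_s\le 0$ (valid because both solutions share the same obstacle), a Girsanov change of measure with parameter bounded by $L_z$, and the class-$\cD$ property to justify taking expectations after localization. That part is sound.

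The existence part is where the proposal has genuine gaps, and they sit exactly at the steps you flag but do not resolve. The paper does not penalize: it first truncates the data ($\xi^n=q_n(\xi)$, $f^n=f-f^0+q_n(f^0)$) and invokes the known $\LL^2$ wellposedness of reflected BSDEs for a square-integrable obstacle, proving the approximating solutions are Cauchy in $\cD$, $\cS^\beta$, $\cH^\beta$; it then truncates the obstacle, $S^n=S\wedge n$, and proves Cauchy estimates again, using a stopping time $\tau^\eps_\sigma$ before which $K^n$ is flat, so that only $S^+$ evaluated at a \emph{single} stopping time, never its running supremum, enters the bounds --- this is precisely what makes the hypothesis $S^+\in\cD(\PP)$ sufficient. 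Your penalization route runs into three obstructions. First, the uniform upper bound $Y^n\le\cY$ and the uniform control of $K^n_T$, $Z^n$: the classical penalization estimates are quadratic and pass through $\int_0^T Y^n_s\,dK^n_s\le \sup_t S^+_t\,K^n_T$ plus Young's inequality, hence require $\EE\big[\sup_t (S^+_t)^2\big]<\infty$. Here only $S^+\in\cD(\PP)$ is assumed; class $\cD$ gives no integrability of the running supremum beyond the $\beta<1$ moments of Lemma \ref{ELbetaleqsupEL}, so this entire chain of estimates is unavailable, and no alternative construction of the dominating supersolution $\cY$ is given. (Note also that the per-$n$ estimates of Theorem \ref{thm:ExistenceUniquenessBSDE} applied to $f^n$ carry constants depending on the $y$-Lipschitz constant $L_y+n$, so they cannot be summed or made uniform as stated.) Second, for $\beta<1$ the spaces $\cH^\beta(\PP)$ and $\cI^\beta(\PP)$ are complete metric spaces that are not reflexive, indeed not locally convex, so there is no weak-compactness principle permitting you to ``extract limits $Z^n\to Z$ in $\cH^\beta(\PP)$ and $K^n\to K$ in $\cI^\beta(\PP)$''; one must prove strong Cauchy estimates, which is exactly the content of the paper's two-stage truncation and is absent from your sketch. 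Third, without these convergences neither $(Y^n-S)^-\to 0$ nor the Skorokhod condition $\int_0^T(Y_{t-}-S_{t-})\,dK_t=0$ can be verified; this last point is delicate here in any case, since $S$ is only c\`adl\`ag, so the limit $K$ may require predictable jumps while every $K^n$ produced by penalization is absolutely continuous (the paper handles this via a Snell-envelope/Doob--Meyer argument). As written, the existence proof is therefore incomplete at its core, and the missing steps are not routine in this $\LL^1$, nonlinear-expectation setting.
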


\begin{remark}
 Unlike BSDEs in Subsection \ref{subsection:BSDE}, we are only able to provide a partial a priori estimate, without an estimate for $Y$, due to the presence of the term $K$ and the exponent $\beta<1$, see Proposition \ref{EstimationZNK}. 
 We also note that a complete a priori estimate with $Y$ is possible if the generator $f$ does not depend on $z$ and the barrier is continuous, see e.g.~\cite[Proposition 5.1]{RS12}. 
\end{remark}

We also have the following stability and comparison results. 

\begin{theorem} \label{thm:RBSDE.stability.comparison}
  Let $(f,\xi,S)$ and $(f',\xi', S')$ satisfy the assumptions of Theorem \ref{RBSDEmainthm} with corresponding solutions $(Y,Z,K)$ and $(Y',Z',K')$.
  \begin{enumerate}[$(i)$]
   \item \textnormal{Stability:} Let $S=S'$. With $\delta Y:=Y'-Y$, $\delta Z:=Z'-Z$, $\delta K:=K'-K$, $\delta\xi:=\xi'-\xi$ and $\delta f:=f'-f$, we have
     \begin{align*}
       \|\d Y\|_{\cD(\PP)}
        &\leq
        \cE^{\PP}\left[|\delta\xi|+\int_0^T\big|\delta f_s(Y_s,Z_s)\big|ds\right],
     \end{align*}
     and for all $\beta\in(0,1)$, there exists a constant $C=C_{\beta,L,T}$ such that 
       \begin{align*}
        \|\d Y\|_{\cS^\b(\PP)} + \|\d Z\|_{\cH^\b(\PP)} + \|\delta K\|_{\cS^\beta(\PP)} 
        &\leq C\left(\cE^{\PP}\left[|\delta\xi|\right]^\beta + \cE^{\PP}\left[\int_0^T|\delta f_s(Y_s,Z_s)|ds\right]^\beta\right).
       \end{align*}
           %:= \|Y'\|_{\cS^\b(\PP)}+ \|Y'\|^\beta_{\cD(\PP)}  + \cE^\PP\Big[\int_0^T\big|f'^0_s\big|ds\Big]^\beta$.      
   \item \textnormal{Comparison:} Suppose that $\xi\leq\xi'$, $\PP$-a.s.; $f(y,z)\leq f'(y,z)$, $dt\otimes\PP$-a.e., for all $y,z\in\RR\times\RR^d$; 
                                    and $S\leq S'$, $dt\otimes\PP$-a.e.
                                  Then, $Y_\tau\leq Y'_\tau$, for all $\tau\in\cT_{0,T}$.  
  \end{enumerate}
\end{theorem}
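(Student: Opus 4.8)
The plan is to establish the comparison result $(ii)$ first, since the stability estimates in $(i)$ will follow by feeding the very same linearization into the a priori estimates already available for the RBSDE. Throughout I invoke Remark \ref{rem:monotonicity} to assume, without loss of generality, that $F$ is nonincreasing in $y$, so that the linearized $y$-coefficient has a definite sign.

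For the comparison, I would subtract the two equations \eqref{eq:RBSDE} for $(Y,Z,K)$ and $(Y',Z',K')$ and linearize the generator difference. Writing $\delta Y=Y'-Y$, $\delta Z=Z'-Z$, $\delta K=K'-K$, I decompose
\[
f'_s(Y'_s,Z'_s)-f_s(Y_s,Z_s)
=\delta f_s(Y'_s,Z'_s)+\big(f_s(Y'_s,Z'_s)-f_s(Y_s,Z'_s)\big)+\big(f_s(Y_s,Z'_s)-f_s(Y_s,Z_s)\big),
\]
and represent the last two brackets as $a_s\,\delta Y_s$ and $b_s\cdot\widehat\sigma_s^{\bf T}\delta Z_s$ with $a_s\le 0$ (by monotonicity) and $|b_s|\le L_z$ (by Assumption \ref{assum:BSDE.Lipschitz}). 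Since $b$ is bounded by $L_z$, the associated Girsanov density defines a measure $\QQ\in\cQ_{L_z}(\PP)$ under which the $\delta Z$-drift is removed and $B^\lambda$ is a martingale; this is exactly why $L_z$ was chosen as the Girsanov bound, so that every bound obtained under such a $\QQ$ is dominated by $\cE^{\PP}=\sup_{\QQ\in\cQ_{L_z}(\PP)}\EE^{\QQ}$.

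The core of the argument is a sign analysis of the reflection term. On the event $\{\delta Y_{s}<0\}$ one has $Y_s>Y'_s\ge S'_s\ge S_s$, so the Skorokhod condition for the unprimed problem forces $dK_s=0$ there, whence $d\,\delta K_s=dK'_s\ge 0$ on this set. Applying the Meyer--It\^o formula to $\Gamma_s(\delta Y_s)^-$ with the integrating factor $\Gamma_s:=\exp\!\big(\int_0^s a_r\,dr\big)$, the $a_s\,\delta Y_s$ contribution cancels against $d\Gamma_s$, the convexity (local-time) term is nonnegative, the source $\delta f_s(Y'_s,Z'_s)\ge 0$ enters with a favorable sign, and the reflection term contributes $\Gamma_s\Ind_{\{\delta Y_{s-}<0\}}\,dK'_s\ge 0$. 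Taking $\QQ$-expectation after localization, and using $(\delta Y_T)^-=(\delta\xi)^-=0$, all non-martingale terms are nonnegative, forcing $\EE^{\QQ}[\Gamma_\tau(\delta Y_\tau)^-]\le 0$ and hence $(\delta Y_\tau)^-=0$, i.e.\ $Y_\tau\le Y'_\tau$ $\PP$-a.s.\ for every $\tau\in\cT_{0,T}$.

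For the stability estimates $(i)$, where now $S=S'$, I would run the same linearization but keep the source $\delta f_s(Y_s,Z_s)$ without a sign. The decisive point is that $S=S'$ yields a sign for $\delta K$ on both sides: on $\{\delta Y_s>0\}$ one has $Y'_s>S'_s=S_s$ so $dK'_s=0$, while on $\{\delta Y_s<0\}$ one has $dK_s=0$; applying Meyer--It\^o to $\Gamma_s|\delta Y_s|$ then eliminates the reflection term in the right direction, and the $\cD$-estimate follows exactly as in the BSDE a priori bound \eqref{BSDEEstimationEq1}, with $|\delta\xi|$ and $\int_0^T|\delta f_s(Y_s,Z_s)|ds$ as data. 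The $\cS^\beta$, $\cH^\beta$ and $\cS^\beta$ bounds then follow by inserting the same data into the a priori estimate of Proposition \ref{EstimationZNK}, which produces the power-$\beta$ form for $\beta\in(0,1)$. I expect the main obstacle to be the rigorous handling of the reflection term together with the nonlinear expectation: the sign of $d\,\delta K$ must be justified via the Skorokhod condition while the localizing stopping times are controlled uniformly over $\QQ\in\cQ_{L_z}(\PP)$, and the jumps of the predictable finite-variation parts $K,K'$ require the Meyer--It\^o rather than the classical Tanaka formula.
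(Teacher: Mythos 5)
Your treatment of part (i) is essentially the paper's own argument: linearize, use $S=S'$ together with the Skorokhod condition to get $\sgn(\delta Y_{s-})\,d\delta K_s\le 0$, run a Tanaka/Meyer--It\^o estimate on $|\delta Y|$ under a Girsanov measure in $\cQ_{L_z}(\PP)$, and conclude via Lemma \ref{techLemma1} for the $\cD$-norm and the Doob-type inequality of Lemma \ref{supDoob} for the $\cS^\beta$-norm (your integrating factor $\Gamma$ plays the role of Remark \ref{rem:monotonicity} in the paper). One concrete correction: the bounds on $\delta Z$ and $\delta K$ cannot be obtained by ``inserting the data into Proposition \ref{EstimationZNK}''. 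That proposition is an a priori estimate for a genuine RBSDE solution, whose third component is nondecreasing and satisfies the Skorokhod condition; the triple $(\delta Y,\delta Z,\delta K)$ is not such a solution, since $\delta K$ is merely of bounded variation. What the paper does instead---and what your plan needs---is to repeat Steps 2 and 3 of the proof of Theorem \ref{ExistenceUniqueness112} for differences: It\^o's formula on $(\delta Y)^2$ combined with the sign $\delta Y_{s-}\,d\delta K_s\le 0$ (valid precisely because $S=S'$) yields the $\cH^\beta$ bound on $\delta Z$, and then the equation itself bounds $\sup_{s}|\delta K_s|$, giving the $\cS^\beta$ bound on $\delta K$.

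For part (ii) you take a genuinely different route. The paper never runs a direct comparison argument in the $\LL^1$ class: it proves comparison for the $\LL^2$ solutions (Proposition \ref{comparison1}, inherited from classical theory and stated without proof), transfers it to the approximating sequences (Proposition \ref{comparison2}), and passes to the limit, using that the $\LL^1$ solutions are constructed as---and by uniqueness coincide with---these limits. You instead linearize the difference of the two equations directly, apply Meyer--It\^o to $\Gamma_s(\delta Y_s)^-$, use that on $\{\delta Y_{s-}<0\}$ one has $Y_{s-}>Y'_{s-}\ge S'_{s-}\ge S_{s-}$ so that $dK_s$ vanishes there, and conclude under a measure $\QQ\in\cQ_{L_z}(\PP)$ by class-$\cD$ localization. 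This is sound, provided you use the one-sided derivative version $-\Ind_{\{x<0\}}$ of the Meyer--It\^o formula so that the remaining reflection term $-\Ind_{\{\delta Y_{s-}<0\}}dK'_s$ has the favorable sign and the boundary set $\{\delta Y_{s-}=0\}$ causes no trouble, and provided you state the Skorokhod sign analysis with left limits, as you do in the formula but not in the prose. What your approach buys is self-containedness: it applies to arbitrary class-$\cD$ solutions without invoking the approximation construction or the unproved $\LL^2$ comparison result. What the paper's approach buys is the complete avoidance of this delicate stochastic calculus (jumps of $K,K'$, the boundary set, localization) at the cost of being tied to the constructed solutions.
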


\subsection{$\LL^1$-solution of second-order backward SDE}

Following Soner, Touzi \& Zhang \cite{STZ12}, we introduce second-order backward SDE as a family of backward SDEs defined on the supports of a convenient family of singular probability measures. 
We are given a fixed family $\{\cP(t,\omega)\}_{(t,\omega)\in[0,T]\times\Omega}$ of sets of probability measures on $(\Omega,\cF_T)$, where $\cP(t,\omega)\subseteq\overline{\cP}_S$. 
In order to apply the classic measurable selection results, we need the following properties of the probability families $\{\cP(t,\omega)\}_{(t,\omega)\in[0,T]\times\Omega}$. 

% We make the following assumptions for the family of probability measures similar to \cite{PTZ17}. 
\begin{assumption}  \label{assumption:ppt-prob}
The family of probability measures $\cP=\{\cP(t,\omega)\}_{(t,\omega)\in[0,T]\times\Omega}$ verifies the following properties:
\begin{enumerate}[$(i)$]
 \item For every $(t,\omega)\in[0,T]\times\Omega$, one has $\cP(t,\omega)=\cP(t,\omega_{\cdot\wedge t})$ and $\PP(\Omega^\omega_t)=1$ whenever $\PP\in\cP(t,\omega)$.
       The graph $\llbracket\cP\rrbracket$ of $\cP$, defined by $\llbracket \cP\rrbracket:=\{(t,\omega,\PP)\,|\,\PP\in\cP(t,\omega)\}$, is analytic in $[0,T]\times\Omega\times\cM_1$.
 \item $\cP$ is stable under conditioning: i.e., for every $(t,\omega)\in[0,T]\times\Omega$ and every $\PP\in\cP(t,\omega)$ together with an $\FF$-stopping time $\tau$ taking values in $[t,T]$ and $\theta:=\tau^{t,\omega}-t$, 
       one has for $\PP$-a.e.~$\omega'\in\Omega$, $\PP^{\theta,\omega'}\in\cP\big(\tau(\omega\otimes_t\omega'),\omega\otimes_t\omega'\big)$.
 \item $\cP$ is stable under concatenation, i.e., let $\mathbb{P}\in \mathcal{P}(t, \omega)$, $\sigma$ and $\theta$ be defined as above,
       and let $\nu:\Omega\longrightarrow \mathcal{M}_1$ be an $\mathcal{F}_\theta$-measurable kernel with $\nu({\omega'})\in \mathcal{P}(\sigma, \omega\otimes_t\omega')$ for $\mathbb{P}$-a.e.~${\omega'}\in \Omega$, 
       then the measure $\overline{\PP}:=\mathbb{P}\otimes_\theta \nu$ defined by 
       \begin{equation}\label{concprob}
         \overline{\mathbb{P}}(A)=\int\int ({\bf 1}_A)^{\theta,\omega'}(\omega'')\nu(d\omega''; {\omega'})\mathbb{P}(d\omega'), \quad A\in \mathcal{F},
       \end{equation}
       is an element of $\mathcal{P}(t, \omega)$.
\end{enumerate}
\end{assumption}

Clearly, $\cP(0,\omega)$ is independent of $\omega$, and therefore $\cP_0:=\cP(0,{\bf 0})=\cP(0,\omega)$. 

\begin{remark} \label{rem:assummeasselection}
It is proved in \cite[Theorem 2.4, Corollary 2.5]{NN13}, that Assumption \ref{assumption:ppt-prob} holds in the context of the two following important situations (see also \cite[Remark 1.3]{PTZ18}):
\begin{itemize}
 \item the family $\{\overline{\cP}_S(t,\omega)\}$ is defined as a natural dynamic version of $\overline{\cP}_S$,
 \item the family $\{\cP_U(t,\omega)\}$ defined for some closed convex subset $U$ of $\dbS_d^+(\dbR)$ by $\cP_U(t,\omega):=\big\{\dbP\in\overline{\cP}_S(t,\omega):~\widehat{\sigma}\in U\big\}$.
\end{itemize}
\end{remark}

Our general 2BSDE takes the following form:
\begin{equation} \label{eq:2BSDE}
  Y_t = \xi + \int_t^Tf_s(Y_s,Z_s)ds - Z_s\cdot dB_s + dK_s,\quad \cP_0\mbox{-q.s.}
\end{equation}
 for some nondecreasing process $K$ satisfying $K_0=0$ and together with an appropriate minimality condition. 
We recall that a property is said to hold $\cP_0$-quasi surely, abbreviated as $\cP_0$-q.s., if it holds $\PP$-a.s.~for all $\PP\in\cP_0$. 

\begin{definition}\label{def0}
For $\beta\in(0,1)$, the process $(Y,Z)\in\cD\big(\cP_0,\FF^{+,\cP_0}\big)\times\cH^\beta\big(\cP_0,\FF^{\cP_0}\big)$ is a supersolution of the 2BSDE \eqref{eq:2BSDE}, if for all $\PP\in\cP_0$, the process
   $$
   K^\PP_t
   :=
   Y_0 - Y_{t} - \int_0^t F_s\big(Y_s, Z_s,\widehat\sigma_s\big)ds + Z_s \cdot dB_s,
   ~~t\in[0,T],
   ~~\PP\mbox{-a.s.}
   $$
is a predictable nondecreasing process in $\cI^\beta(\PP)$ with $K_0=0$.
\end{definition}

The dependence of $K^\PP$ on $\PP$ is inherited from the dependence of the stochastic integral $\int_0^. Z_s\cdot dB_s$ on the underlying local martingale measure $\PP$.\footnote{By Theorem 2.2 in Nutz \cite{Nut12}, the family $\{\int_0^. Z_s\cdot dB_s\}_{\dbP\in \cP_0}$ can be aggregated as a medial limit $\int_0^. Z_s\cdot dB_s$ under the acceptance of Zermelo-Fraenkel set theory with axiom of choice together with the continuum hypothesis into our framework. In this case, $\int_0^. Z_s\cdot dB_s$ can be chosen as an $\mathbb{F}^{+, \cP_0}$-adapted process, and the family $\{K^\dbP\}_{\dbP\in\cP_0}$ can be aggregated into the resulting medial limit $K$, i.e., $K=K^\dbP$, $\dbP$-a.s.~for all $\dbP\in\cP_0$. } Because of this the 2BSDE representation \eqref{eq:2BSDE} should be rather written under each $\PP\in\cP_0$ as:
	\begin{equation} \label{2bsdel}
           Y_{t} = \xi + \int_t^T F_s\big(Y_s, Z_s,\widehat\sigma_s\big)ds
                                           -Z_s \cdot dB_s
                                           + dK^\PP_s, 
            \quad\PP\mbox{-a.s.}
        \end{equation}
% We next introduce the notations of the shifted variables:
%  $$ \xi^{t,\om}(\om'):=\xi(\om\otimes_t\om'),~f^{0,t,\om}_s(\om'):=F_{t+s}\big(\om\otimes_t\om',0,0,\widehat\sigma_s(\om')\big), $$
%  which involve the paths concatenation operator $(\om\otimes_t\om')_s:=\Ind_{\{s\le t\}}\om_s+\Ind_{\{s>t\}}(\om_t+\om'_{s-t})$. Define
%  \begin{align*}
%    \cP(t,\om):=\big\{\dbP\in\cP_b:~f^{0,t,\om}_s(\om')<\infty,~\mbox{for Leb}\!\otimes\! \dbP-\mbox{a.e.}~(s,\om')\in\dbR_+\times\Om\big\},
%  \end{align*}
% so that $\cP_0=\cP(0,{\bf 0})$, in particular.

\begin{assumption} \label{AssumptionOmegaBSDE}
The terminal condition $\xi$ and the generator $F$ satisfy the integrability:
   \begin{align*}
     \lim_{n\to\infty}\left\| \big|\xi^{t,\omega}\big|\Ind_{\{|\xi^{t,\omega}|\geq n\}} 
                                            + \int_0^{T-t}\left|f^{0,t,\omega}_s\right|\Ind_{\{|f^{0,t,\omega}_s|\geq n\}}ds
                                   \right\|_{\cL^1(\cP(t,\omega))} = 0
                              \quad \mbox{for all} ~~
                              (t,\omega)\in[0,T]\times\Omega,
   \end{align*}
   where
    $$ f^{0,t,\om}_s(\om'):=F_{t+s}\big(\om\otimes_t\om',0,0,\widehat\sigma_s(\om')\big). $$
\end{assumption}

For all $\PP\in\cP_0$, we denote by $\big(\cY^{\PP},\cZ^{\PP}\big)$ the unique solution of the backward SDE \eqref{eq:BSDE}. By (\textbf{H1}), there exist two random fields $a^{\PP}(y,z)$ and $b^{\PP}(y,z)$ bounded by $L$  such that 
 $$ 
 f_s(y,z) - f_s\big(\cY^{\PP}_s,\cZ^{\PP}_s\big) 
 = a_s^{\PP}\big(y- \cY_s^{\PP} \big) + b_s^{\PP}\cdot\widehat\sigma_s\big(z- \cZ_s^{\PP} \big). 
 $$
We also define for all stopping times $\tau\in\cT_{0,T}$: 
 $$ \cP(\tau,\PP):= \big\{\PP'\in\cP_0:~\PP'=\PP~\mbox{on}~\cF_{\tau}\big\} \quad \mbox{ and } \quad \cP_+(\tau,\PP) := \bigcup_{h>0}\cP\big((\tau+h)\wedge T,\PP\big). $$
We now introduce our notion of second order backward SDE by means of a minimality condition involving the last function $b^\PP$.

\begin{definition}\label{def1}
For $\beta\in(0,1)$, the process $(Y,Z)\in\cD\big(\cP_0,\FF^{+,\cP_0}\big)\times\cH^\beta\big(\cP_0,\FF^{\cP_0}\big)$ is a solution to 2BSDE \eqref{eq:2BSDE} if it is a supersolution in the sense of Definition \ref{def0}, and it satisfies the minimality condition:
          \begin{equation}\label{mincond}
             K^{\mathbb{P}}_{\tau} = \essinf^\mathbb{P}_{\mathbb{P}'\in \mathcal{P}_+(\tau, \mathbb{P})}\mathbb{E}^{\mathbb{Q}_\tau^{\PP'}}\left[K^{\mathbb{P}'}_{T}\Big|\mathcal{F}^{+, \mathbb{P}'}_{\tau}\right], \quad \mathbb{P}\mbox{-a.s.}~~\mbox{for all}~~ \PP\in \cP_0,~~\tau\in\cT_{0,T},
          \end{equation}
where $\QQ_\tau^{\PP'}\in\cQ_{L_z}(\PP')$ is defined by the density $\frac{d\QQ_\tau^{\PP'}}{d\dbP'}:=\frac{G^{b^{\dbP'}\!\!(Y,Z)}_T}{G^{b^{\dbP'}\!\!(Y,Z)}_\tau}$.
\end{definition} 

Note that $\QQ^{\PP'}_\tau\big|_{\cF_\tau^+}=\PP'\big|_{\cF_\tau^+}=\PP\big|_{\cF_\tau^+}$ and the process $W_t-\int_\tau^tb_s^{\PP'}ds$ is a Brownian motion starting from $W_\tau$. 

Our next main result is the following wellposedness of second order backward SDEs, we emphasize again that Assumption \ref{assum:BSDE.Lipschitz} holds true in typical situations, see Remark \ref{rem:assummeasselection}.

\begin{theorem}
Under Assumptions \ref{assum:BSDE.Lipschitz}, \ref{assumption:ppt-prob} and \ref{AssumptionOmegaBSDE}, the 2BSDE \eqref{eq:2BSDE} has a unique solution $(Y,Z)\in\cD\big(\cP_0,\FF^{+,\cP_0}\big)\times\cH^\beta\big(\cP_0,\FF^{\cP_0}\big)$, for all $\beta\in(0,1)$. 

% Moreover, if $\cP_0$ is saturated \footnote{We say that the family $\cP_0$ is saturated if, for all $\PP\in\cP_0$, we have $\QQ\in\cP_0$ for every probability measure $\QQ\sim\PP$ on $(\Omega,\cF)$ such that $X$ is $\QQ-$local martingale. 
%                                    The assertion follows by the same argument as in \cite[Theorem 5.1]{PTZ18}.}, then $N^\dbP=0$ for all $\dbP\in\cP_0$. 
\end{theorem}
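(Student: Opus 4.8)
The plan is to prove uniqueness and existence separately, following the structure announced for Sections \ref{sect:2BSDEuniqueness} and \ref{sect:2BSDEexistence}. Throughout I would invoke Remark \ref{rem:monotonicity} to assume $F$ nonincreasing in $y$, so that the linearization coefficient $a^{\PP}$ below is nonpositive; this is exactly what turns the minimality condition \eqref{mincond} into a genuine characterization of $Y$. The backbone of both halves is the stochastic representation
\[
  Y_\tau = \esssup^\PP_{\PP'\in\cP_+(\tau,\PP)}\cY^{\PP'}_\tau,
  \qquad \PP\text{-a.s., for all } \PP\in\cP_0,\ \tau\in\cT_{0,T},
\]
where, for each $\PP'\in\cP_0$, $\big(\cY^{\PP'},\cZ^{\PP'}\big)$ is the solution of the BSDE \eqref{eq:BSDE} under $\PP'$ furnished by Theorem \ref{thm:ExistenceUniquenessBSDE}. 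Since the right-hand side depends only on the data $(f,\xi)$ and not on the particular solution, this identity yields uniqueness of $Y$ immediately.

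To prove the representation for an arbitrary solution $(Y,Z)$, fix $\PP'\in\cP_+(\tau,\PP)$, set $\delta Y:=Y-\cY^{\PP'}$, $\delta Z:=Z-\cZ^{\PP'}$, and use the linearization $f_s(Y_s,Z_s)-f_s\big(\cY^{\PP'}_s,\cZ^{\PP'}_s\big)=a^{\PP'}_s\,\delta Y_s+b^{\PP'}_s\cdot\widehat\sigma_s\,\delta Z_s$. Discounting by $\Gamma_s:=\exp\big(\int_\tau^s a^{\PP'}_r\,dr\big)\in(0,1]$ and passing to $\QQ^{\PP'}_\tau$, under which $W-\int b^{\PP'}$ is a Brownian motion, the drift cancels (using the symmetry of $\widehat\sigma$) and $\eqref{2bsdel}$ collapses to the single identity
\[
  \delta Y_\tau=\EE^{\QQ^{\PP'}_\tau}\Big[\textstyle\int_\tau^T\Gamma_s\,dK^{\PP'}_s\,\Big|\,\cF^{+,\PP'}_\tau\Big].
\]
Since $\Gamma\ge0$ and $K^{\PP'}$ is nondecreasing, $\delta Y_\tau\ge0$, whence $Y_\tau\ge\cY^{\PP'}_\tau$ and $Y_\tau\ge\esssup_{\PP'}\cY^{\PP'}_\tau$. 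Since $\Gamma\le1$ and $K^{\PP'}_\tau=K^\PP_\tau$, the same identity gives $\delta Y_\tau\le\EE^{\QQ^{\PP'}_\tau}\big[K^{\PP'}_T\,\big|\,\cF^{+,\PP'}_\tau\big]-K^\PP_\tau$; bounding $\cY^{\PP'}_\tau$ by the esssup, taking the essential infimum over $\PP'\in\cP_+(\tau,\PP)$, and invoking \eqref{mincond} makes the remaining terms cancel, giving $Y_\tau\le\esssup_{\PP'}\cY^{\PP'}_\tau$. Granted uniqueness of $Y$, uniqueness of $Z$ (and then of $K^\PP$) follows under each $\PP\in\overline{\cP}_S$ from the martingale representation property, by identifying $\int Z\,dB$ as the martingale part of $Y$'s special-semimartingale decomposition.

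For existence I would build the candidate as a value function: in the shifted space, set $V_t(\omega):=\sup_{\PP\in\cP(t,\omega)}\cY^{\PP,t,\omega}_0$, where $\cY^{\PP,t,\omega}$ solves the shifted BSDE with data $\big(\xi^{t,\omega},f^{t,\omega}\big)$. The first task is measurability: using the analyticity of $\llbracket\cP\rrbracket$ from Assumption \ref{assumption:ppt-prob}$(i)$ and the joint measurability of BSDE solutions in their data, $V$ is upper-semianalytic, hence $\FF^{+,\cP_0}$-measurable up to $\cP_0$-polar sets. The central task is the dynamic programming principle
\[
  V_\tau=\esssup^\PP_{\PP'\in\cP(\tau,\PP)}\cY^{\PP'}_{\tau,\sigma}[V_\sigma],
  \qquad \tau\le\sigma\in\cT_{0,T},
\]
with $\cY^{\PP'}_{\tau,\sigma}[\eta]$ the time-$\tau$ value of the BSDE on $[\tau,\sigma]$ with terminal datum $\eta$: the inequality ``$\ge$'' comes from stability under concatenation (Assumption \ref{assumption:ppt-prob}$(iii)$), pasting an almost-optimal continuation, and ``$\le$'' from stability under conditioning (Assumption \ref{assumption:ppt-prob}$(ii)$), each step requiring a measurable selection to assemble a single admissible kernel. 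Fixing $\PP\in\cP_0$, the DPP exhibits $Y:=V$ as an $\cE^\PP$-supermartingale of class $\cD(\PP)$ (the class-$\cD$ bound and the c\`adl\`ag regularization coming from the $\LL^1$ a priori estimate \eqref{BSDEEstimationEq1} together with the supermartingale property). A nonlinear Doob--Meyer decomposition in this $\LL^1$-class—the analogue of the decomposition underlying Theorem \ref{RBSDEmainthm}—then yields \eqref{2bsdel} with a predictable nondecreasing $K^\PP\in\cI^\beta(\PP)$, while $Z$ is read off the martingale part via the representation property of $\PP\in\overline{\cP}_S$; the families $\{Z\}$ and $\{K^\PP\}$ are aggregated through the medial-limit construction recalled in the footnote to Definition \ref{def0}. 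The minimality condition \eqref{mincond} is then obtained by reversing the linearization computation of the uniqueness part, and $(Y,Z)\in\cD\big(\cP_0,\FF^{+,\cP_0}\big)\times\cH^\beta\big(\cP_0,\FF^{\cP_0}\big)$ follows from Assumption \ref{AssumptionOmegaBSDE} and the estimates \eqref{BSDEEstimationEq1}--\eqref{BSDEEstimationEq2} applied uniformly over $\cP_0$.

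I expect the principal obstacle to lie in the existence half, and within it in the interplay between the weak $\LL^1$/class-$\cD$ integrability and the non-dominated family. Concretely, establishing the DPP and the c\`adl\`ag regularity of $V$ without the comfort of $\LL^p$ ($p>1$) moment bounds forces every estimate through uniform integrability under the nonlinear, non-dominated expectation $\cE^{\cP_0}$, and the Doob--Meyer decomposition together with the aggregation of $Z$ and $K^\PP$ across mutually singular measures must be carried out in this low-integrability regime. In the uniqueness part the corresponding delicate point is justifying that the $\QQ^{\PP'}_\tau$-stochastic integral is a true martingale, so that it drops under the conditional expectation in the key identity for $\delta Y_\tau$; this again rests on the class-$\cD$ control secured by Assumption \ref{AssumptionOmegaBSDE}.
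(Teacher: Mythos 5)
Your proposal follows essentially the same two-part route as the paper: uniqueness via the representation $Y_\tau=\esssup^\PP_{\PP'\in\cP_+(\tau,\PP)}\cY^{\PP'}_\tau$, obtained by linearization, discounting by $e^{\int_\tau^\cdot a^{\PP'}_u du}$, conditioning under $\QQ^{\PP'}_\tau$ and the minimality condition \eqref{mincond} (this is exactly Theorem \ref{thm:representation}); and existence via the shifted-space value function, measurable selection and dynamic programming, the c\`adl\`ag right limit $V^+$ with its per-measure Doob--Meyer decomposition, and a final verification of \eqref{mincond} by reversing the linearization (Sections \ref{sect:2BSDEuniqueness} and \ref{sect:2BSDEexistence}). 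The only inessential deviation is that you invoke the medial-limit construction to aggregate $\{Z^\PP\}_{\PP\in\cP_0}$, whereas the paper obtains an $\FF^{\cP_0}$-progressive aggregator directly from the pathwise relation $d\langle Y,B\rangle=Z\,d\langle B\rangle$, so no additional set-theoretic axioms are required.
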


Similar to Soner, Touzi \& Zhang \cite{STZ12}, the following comparison result for second order backward SDEs is a by-product of our construction; the proof is provided in Theorem \ref{thm:representation}.

\begin{proposition}\label{prop:2BSDEcomp}
Let $(Y,Z)$ and $(Y',Z')$ be solutions of 2BSDEs with parameters $(F,\xi)$ and $(F',\xi')$, respectively, which satisfy Assumptions \ref{assum:BSDE.Lipschitz} and \ref{AssumptionOmegaBSDE}. 
Suppose further that $\xi\leq\xi'$ and $F_t\big(y,z,\widehat\sigma_t\big)\le F'_t\big(y,z,\widehat\sigma_t\big)$ for all $(y,z)\in\dbR\times\dbR^d$, $dt\otimes\cP_0$-q.s. 
Then, we have $Y\leq Y'$, $dt\otimes\cP_0$-q.s.
\end{proposition}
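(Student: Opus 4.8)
The plan is to derive the comparison from a representation of the first component of any 2BSDE solution as an essential supremum, over the admissible family of measures, of the solutions of the associated standard BSDEs; this is the content of the representation alluded to in Theorem \ref{thm:representation}. Writing $\cY^{\PP'}$ (resp.\ $\cY^{\prime,\PP'}$) for the $Y$-component of the unique solution of the BSDE \eqref{eq:BSDE} with data $(F,\xi)$ (resp.\ $(F',\xi')$) under $\PP'$, furnished by Theorem \ref{thm:ExistenceUniquenessBSDE}, I would prove that for every $\PP\in\cP_0$ and every $\tau\in\cT_{0,T}$,
\[
 Y_\tau = \esssup^\PP_{\PP'\in\cP_+(\tau,\PP)} \cY^{\PP'}_\tau, \qquad \PP\text{-a.s.},
\]
together with the analogous identity for $Y'$. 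Granting this, the proposition follows at once: under each fixed $\PP'$ the hypotheses $\xi\le\xi'$ and $f_s(y,z)\le f'_s(y,z)$ hold $\PP'$-a.s.\ and $dt\otimes\PP'$-a.e., so the comparison statement of Theorem \ref{thm:BSDE.Stab.Comp} gives $\cY^{\PP'}_\tau\le\cY^{\prime,\PP'}_\tau$ $\PP'$-a.s.; taking the essential supremum over $\PP'\in\cP_+(\tau,\PP)$ preserves this order and yields $Y_\tau\le Y'_\tau$ $\PP$-a.s.\ for all $\PP$ and $\tau$, i.e.\ $Y\le Y'$, $dt\otimes\cP_0$-q.s.

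To establish the representation I would first invoke Remark \ref{rem:monotonicity} to assume $F$ nonincreasing in $y$, so that the linearization coefficient $a^{\PP'}$ is nonpositive. Fixing $\PP'\in\cP_+(\tau,\PP)$ and subtracting the BSDE for $(\cY^{\PP'},\cZ^{\PP'})$ from the $\PP'$-form \eqref{2bsdel} of the 2BSDE, the difference $\delta Y:=Y-\cY^{\PP'}$ solves a linear BSDE with coefficients $a^{\PP'},b^{\PP'}$ and nondecreasing forcing term $K^{\PP'}$. Solving it under the Girsanov measure $\QQ^{\PP'}_\tau$, which turns $b^{\PP'}$ into a drift, gives
\[
 Y_\tau - \cY^{\PP'}_\tau
 = \EE^{\QQ^{\PP'}_\tau}\!\left[\int_\tau^T e^{\int_\tau^s a^{\PP'}_u\,du}\,dK^{\PP'}_s \,\Big|\, \cF^{+,\PP'}_\tau\right].
\]
Because $K^{\PP'}$ is nondecreasing the right-hand side is nonnegative, which gives the lower bound $Y_\tau\ge\cY^{\PP'}_\tau$ for every $\PP'$. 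For the reverse inequality I use $a^{\PP'}\le 0$ to bound $e^{\int_\tau^s a^{\PP'}_u\,du}\le 1$, whence $Y_\tau-\cY^{\PP'}_\tau\le\EE^{\QQ^{\PP'}_\tau}[K^{\PP'}_T-K^{\PP'}_\tau\mid\cF^{+,\PP'}_\tau]$. Since $\PP'=\PP$ on $\cF_\tau$ forces $K^{\PP'}_\tau=K^\PP_\tau$, the minimality condition \eqref{mincond} is precisely the statement $\essinf^\PP_{\PP'\in\cP_+(\tau,\PP)}\EE^{\QQ^{\PP'}_\tau}[K^{\PP'}_T-K^{\PP'}_\tau\mid\cF^{+,\PP'}_\tau]=0$; thus for each $\varepsilon>0$ there is $\PP'$ with $Y_\tau-\cY^{\PP'}_\tau\le\varepsilon$, and letting $\varepsilon\downarrow 0$ yields the upper bound.

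I expect the representation, and in particular its upper bound, to be the crux; the lower bound and the final order-preserving step are soft. The genuine difficulties are twofold. First, the linear-BSDE formula above must be justified in the present $\LL^1$-under-nonlinear-expectation setting, where the relevant processes lie only in $\cS^\beta$, $\cH^\beta$ and $\cI^\beta$ with $\beta<1$ rather than in an $\LL^2$-space, so that the vanishing of the stochastic-integral term under the conditional expectation requires a careful uniform-integrability argument rather than an $\LL^2$-isometry. Second, converting the pointwise-in-$\PP'$ inequalities into the essential-infimum identity \eqref{mincond} needs the family $\{\EE^{\QQ^{\PP'}_\tau}[K^{\PP'}_T-K^{\PP'}_\tau\mid\cF^{+,\PP'}_\tau]\}_{\PP'}$ to be downward directed, which in turn rests on the stability of $\cP$ under conditioning and concatenation from Assumption \ref{assumption:ppt-prob}; these measurability and directedness points, rather than any single estimate, are where the real work lies.
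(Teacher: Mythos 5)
Your proposal is correct and follows essentially the paper's own route: the paper proves Proposition \ref{prop:2BSDEcomp} precisely via the representation $Y_\tau=\esssup^\PP_{\PP'\in\cP_+(\tau,\PP)}\cY^{\PP'}_\tau$ of Theorem \ref{thm:representation}, established by the same linearization and Girsanov change to $\QQ_\tau^{\PP'}$ combined with the minimality condition \eqref{mincond}, and then concludes by the BSDE comparison under each fixed $\PP'$. The only immaterial differences are that the paper obtains the lower bound $Y_\tau\ge\cY_\tau^{\PP'}$ from the supersolution comparison (Theorem \ref{thm:RBSDE.stability.comparison}(ii)) rather than from nonnegativity of the $K^{\PP'}$-integral, bounds the exponential by $e^{L_yT}$ instead of normalizing $a^{\PP'}\le 0$, and needs no directedness at all: since $Y_\tau-\cY_\tau^{\PP'}\le \EE^{\QQ_\tau^{\PP'}}\big[K_T^{\PP'}-K_\tau^{\PP'}\,\big|\,\cF_\tau^{+,\PP'}\big]$ holds for every $\PP'$, one simply takes $\essinf^\PP_{\PP'}$ on both sides and applies \eqref{mincond} (a hypothesis from Definition \ref{def1}, not something to re-derive), which removes your $\varepsilon$-selection step.
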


\section{Wellposedness of reflected BSDEs}  \label{sect:RBSDE}

Throughout this section, we fix a probability measure $\PP\in\overline{\cP}_S$, and we omit the dependence on $\PP$ in all of our notations (e.g.~$\cD(\PP)$ denoted as $\cD$). 
%We note that $\cQ_{L_z}:=\cQ_{L_z}(\PP)$ is stable concatenation. 
It is clear from Girsanov's Theorem that under a measure $\QQ^\lambda\in\cQ_{L_z}$, the process $W^{\lambda}:=W-\int_0^{\cdot}\lambda_sds$ is a Brownian motion under $\QQ^\lambda$. 
%It is straightforward to check that the set $\cQ_{L_z}$ is stable under concatenation.

\begin{remark}
 We note that under a measure $\QQ^\lambda\in\cQ_{L_z}$ defined as above, the RBSDE satisfies 
  \begin{equation*}
    dY_t = -\big(f_t(Y_t,Z_t)-\widehat\sigma^\intercal_tZ_t\cdot\lambda_t\big)dt + Z_t\cdot dB^\lambda_t -dK_t, 
  \end{equation*}
  where the process 
   $ B_t^\lambda:= B_t-\int_0^t\widehat\sigma_s\lambda_sds$
  is a local martingale under $\QQ^\lambda$, and the generator $f_t(y,z)- \widehat\sigma_t^\intercal z\cdot\lambda_t$ satisfies the Assumption \ref{assum:BSDE.Lipschitz} with Lipschitz coefficients $L_y$ and $2L_z$. 
\end{remark}

\subsection{Some useful inequalities}

First of all, we provide an estimation of a running supremum process. 

\begin{lemma} \label{ELbetaleqsupEL}
 Let $X$ be a nonnegative c\`adl\`ag process, and $X^*_t:=\max_{s\le t} X_s$. Then, 
  \begin{align*}
    \EE\big[(X_T^{*})^\beta\big]
      \leq
    \frac{1}{1-\beta}
    \sup_{\tau\in\cT_{0,T}}\EE[X_\tau]^\beta, 
     \quad \mbox{for all}~~
    \beta\in(0,1).
  \end{align*}
\end{lemma}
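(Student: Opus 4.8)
The plan is to bound the $\beta$-th moment of the running maximum through its tail distribution, reducing everything to a weak-type $(1,1)$ estimate obtained from a first-passage stopping time. Write $A:=\sup_{\tau\in\cT_{0,T}}\EE[X_\tau]$; since $x\mapsto x^\beta$ is increasing, the right-hand side of the claim equals $(1-\beta)^{-1}A^\beta$, so the target is $\EE[(X_T^*)^\beta]\le (1-\beta)^{-1}A^\beta$. I would start from the layer-cake identity, valid for any nonnegative random variable and any $\beta\in(0,1)$,
\[
  \EE\big[(X_T^*)^\beta\big] = \beta\int_0^\infty \lambda^{\beta-1}\,\PP\big(X_T^*>\lambda\big)\,d\lambda,
\]
which moves the entire problem onto controlling the single function $\lambda\mapsto\PP(X_T^*>\lambda)$.

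The core step is the weak-type bound $\PP(X_T^*>\lambda)\le (A/\lambda)\wedge 1$. For fixed $\lambda>0$ I would introduce the first-passage time $\tau_\lambda:=\inf\{t\in[0,T]:X_t>\lambda\}\wedge T$, which is an element of $\cT_{0,T}$ because $X$ is càdlàg and adapted to the (right-continuous, augmented) filtration fixed in this section. On the event $\{X_T^*>\lambda\}$ the process strictly exceeds $\lambda$ before time $T$, and right-continuity of the paths gives $X_{\tau_\lambda}\ge\lambda$ there; since $X\ge 0$ everywhere, this yields $\lambda\,\PP(X_T^*>\lambda)\le\EE\big[X_{\tau_\lambda}\Ind_{\{X_T^*>\lambda\}}\big]\le\EE[X_{\tau_\lambda}]\le A$. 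Combined with the trivial bound $\PP(\,\cdot\,)\le 1$, this is exactly the claimed estimate.

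Finally I would substitute this bound and split the integral at $\lambda=A$:
\[
  \EE\big[(X_T^*)^\beta\big]\le \beta\int_0^A\lambda^{\beta-1}\,d\lambda + \beta A\int_A^\infty\lambda^{\beta-2}\,d\lambda = A^\beta + \frac{\beta}{1-\beta}A^\beta = \frac{1}{1-\beta}A^\beta,
\]
where the second integral converges \emph{precisely} because $\beta<1$, and this is also what produces the constant $(1-\beta)^{-1}$. The only genuinely delicate point is the second paragraph: one must verify that $\tau_\lambda$ is a bona fide $[0,T]$-valued stopping time of the filtration used in the definition of $\cT_{0,T}$, and that the passage from $\{X_T^*>\lambda\}$ to $X_{\tau_\lambda}\ge\lambda$ is legitimate for càdlàg (rather than continuous) paths. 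Here right-continuity is essential: a càdlàg path may cross the level $\lambda$ by a jump, but it cannot be strictly above $\lambda$ along a sequence of times decreasing to $\tau_\lambda$ while having $X_{\tau_\lambda}<\lambda$. Everything else is a routine computation.
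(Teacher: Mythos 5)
Your proof is correct and follows essentially the same route as the paper's: a first-passage stopping time combined with Markov's inequality gives the weak-type bound $\PP\big(X_T^*>\lambda\big)\le \big(A/\lambda\big)\wedge 1$ with $A:=\sup_{\tau\in\cT_{0,T}}\EE[X_\tau]$, and the layer-cake formula integrated against $\beta\lambda^{\beta-1}$ then yields the constant $\frac{1}{1-\beta}$. The only differences (hitting level defined with strict rather than non-strict inequality, and splitting the integral at $\lambda=A$ instead of integrating $\big(\frac{A}{\lambda}\big)\wedge 1$ directly) are cosmetic.
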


\begin{proof}
 For $x>0$, define $\tau_x:=\inf\{t>0: X_t\geq x\}$, with the usual convention that $\inf\emptyset = \infty$.  
 We have that $X^{*}_{\tau_x}=X_{\tau_x}$, and 
    $$ 
    \PP[\tau_x\leq T] 
    \;=\; 
    \PP[X_{\tau_x}\geq x] 
    \;\le\; 
    \frac{\EE[X_{\tau_x}]}{x}
    \;\le\; 
    \frac{c}{x}, 
    $$ 
   with $c:=\sup_{\tau\in\cT_{0,T}}\EE[X_\tau]$,    
   which implies that $ \PP[\tau_x\leq T] \leq \frac{c}{x}\wedge 1$.
 Then,  for $\beta\in(0,1)$
 \begin{equation*}
  \begin{aligned}
    \EE\left[(X^{*}_T)^\beta\right] &= \EE\left[\int_0^\infty\Ind_{\big\{X^{*}_T\geq x\big\}}\beta x^{\beta-1}dx\right] = \int_0^\infty\PP\left[X^{*}_T\geq x\right]\beta x^{\beta-1}dx \\
      & 
      = \int_0^\infty\PP\left[\tau_x\leq T\right]\beta x^{\beta-1}dx 
      \;\le\; \int_0^\infty \left(\frac{c}{x}\wedge 1\right)\beta x^{\beta-1}dx 
      \;=\; \frac{c^\beta}{1-\beta}.
  \end{aligned}
 \end{equation*} 
\end{proof}

\begin{lemma}  \label{techLemma1}
  Let $\zeta$ be a nonnegative $\cF_T$-measurable ramdon variable and $Y$ a nonnegative process such that
   \begin{eqnarray} \label{YleqEX}
     \sup_{\tau\in\cT_{0,T}}\left\{Y_\tau - \EE^{\widehat\QQ}\big[\zeta\big|\cF_\tau^{+,\PP}\big]\right\}\leq 0,
     &\mbox{for some}& 
     \widehat\QQ\in\cQ_{L_z}.
   \end{eqnarray}
  Then, $ \sup_{\tau\in\cT_{0,T}}\cE^{\PP}[ Y_{\tau} ] \leq \cE^{\PP}[\zeta]$.
\end{lemma}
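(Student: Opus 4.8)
The plan is to exploit the stability of $\cQ_{L_z}$ under concatenation, together with the fact that every $\QQ\in\cQ_{L_z}$ is equivalent to $\PP$ (its Girsanov density $G^\lambda$ is strictly positive). Fix an arbitrary $\tau\in\cT_{0,T}$ and an arbitrary $\QQ\in\cQ_{L_z}$. Since $\QQ$ and $\PP$ are equivalent, the standing hypothesis \eqref{YleqEX} yields the pointwise bound $Y_\tau\leq\EE^{\widehat\QQ}\big[\zeta\big|\cF_\tau^{+,\PP}\big]$ not only $\PP$-a.s.\ but also $\QQ$-a.s. Taking $\EE^\QQ$ on both sides preserves this inequality (both sides are nonnegative, so no integrability issue arises; if $\cE^\PP[\zeta]=\infty$ the claim is trivial), giving
\[
  \EE^\QQ[Y_\tau]
  \;\leq\;
  \EE^\QQ\Big[\EE^{\widehat\QQ}\big[\zeta\big|\cF_\tau^{+,\PP}\big]\Big].
\]

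The key step is to recognize the right-hand side as a single expectation under the concatenated measure $\QQ\otimes_\tau\widehat\QQ$. By the very definition of concatenation recalled in Section \ref{sec:space}(ii), namely $\QQ_1\otimes_\tau\QQ_2(A)=\EE^{\QQ_1}\big[\EE^{\QQ_2}[\Ind_A|\cF_\tau^{+,\PP}]\big]$, the usual monotone-class extension from indicators to nonnegative $\cF_T^{+,\PP}$-measurable functions gives
\[
  \EE^{\QQ\otimes_\tau\widehat\QQ}[\zeta]
  \;=\;
  \EE^\QQ\Big[\EE^{\widehat\QQ}\big[\zeta\big|\cF_\tau^{+,\PP}\big]\Big].
\]
Since $\cQ_{L_z}$ is stable under concatenation, we have $\QQ\otimes_\tau\widehat\QQ\in\cQ_{L_z}$, and therefore $\EE^{\QQ\otimes_\tau\widehat\QQ}[\zeta]\leq\sup_{\QQ'\in\cQ_{L_z}}\EE^{\QQ'}[\zeta]=\cE^\PP[\zeta]$. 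Combining the three displays, $\EE^\QQ[Y_\tau]\leq\cE^\PP[\zeta]$.

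The conclusion now follows by taking suprema: the bound $\EE^\QQ[Y_\tau]\leq\cE^\PP[\zeta]$ holds for every $\QQ\in\cQ_{L_z}$ and every $\tau\in\cT_{0,T}$, and its right-hand side does not depend on either, so
\[
  \sup_{\tau\in\cT_{0,T}}\cE^\PP[Y_\tau]
  \;=\;
  \sup_{\tau\in\cT_{0,T}}\,\sup_{\QQ\in\cQ_{L_z}}\EE^\QQ[Y_\tau]
  \;\leq\;
  \cE^\PP[\zeta].
\]
I expect the only delicate point to be the verification that the iterated conditional expectation equals the integral against $\QQ\otimes_\tau\widehat\QQ$ for the general nonnegative $\zeta$ rather than an indicator; this is a routine monotone-class argument, and the substantive ingredient is simply that the concatenation keeps the Girsanov coefficient bounded by $L_z$ so that the resulting measure remains admissible for the nonlinear expectation $\cE^\PP$.
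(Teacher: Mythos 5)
Your proposal is correct and follows essentially the same route as the paper's proof: fix $\QQ\in\cQ_{L_z}$ and $\tau\in\cT_{0,T}$, take $\EE^\QQ$ of the hypothesis, identify the iterated expectation $\EE^{\QQ}\big[\EE^{\widehat\QQ}[\zeta|\cF_\tau^{+,\PP}]\big]$ with $\EE^{\QQ\otimes_\tau\widehat\QQ}[\zeta]$, invoke stability of $\cQ_{L_z}$ under concatenation, and take suprema. The only difference is that you spell out details the paper leaves implicit (equivalence $\QQ\sim\PP$ to transfer the a.s.\ inequality, and the monotone-class extension of the concatenation identity from indicators to nonnegative $\zeta$), which is harmless.
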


\begin{proof}
  Fix $\QQ\in\cQ_{L_z}$ and $\tau\in\cT_{0,T}$.
  Notice that $\QQ\otimes_\tau\widehat{\QQ}\in\cQ_{L_z}$ for all $\QQ\in\cQ_{L_z}$. 
  Then, it follows from \eqref{YleqEX} that
   $$ \EE^{\QQ}[Y_\tau]\leq \EE^{\QQ}\left[\EE^{\widehat\QQ}\big[\zeta\big|\cF^{+,\dbP}_\tau\big]\right] = \EE^{\QQ\otimes_\tau\widehat{\QQ}}[\zeta] \leq \cE^{\PP}[\zeta]. $$
  The required inequality follows by taking supremum over all stopping times and $\QQ\in\cQ_{L_z}$.
\end{proof}

Now, we show a Doob-type inequality under the nonlinear expectation $\cE^{\PP}$, which turns out to be crucial for our analysis.

\begin{lemma}  \label{supDoob}
 Let $(M_t)_{0\leq t\leq T}$ be a nonnegative submartingale under some $\widehat{\QQ}\in\cQ_{L_z}$. 
 Then, 
  \beaa
    \|M\|_{\cS^\b}  
    \;\le\; 
    \frac{1}{1-\beta}\;\cE^{\PP}[M_T]^{\beta}
    &\mbox{for all}&
    0<\beta<1.
  \eeaa
\end{lemma}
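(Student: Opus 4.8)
The plan is to reduce the nonlinear Doob inequality to the one-measure estimate of Lemma~\ref{ELbetaleqsupEL}, applied separately under each $\QQ\in\cQ_{L_z}$, and then to absorb the resulting running supremum of expectations into $\cE^\PP[M_T]$ by exploiting the submartingale property through Lemma~\ref{techLemma1}. First I would record that, since $\beta\in(0,1)$, the exponent $1\wedge\frac1\beta$ equals $1$, so that $\|M\|_{\cS^\beta}=\cE^\PP\big[(M^*_T)^\beta\big]$, where $M^*_T:=\sup_{0\le t\le T}M_t$; here I use that $M\geq 0$ and that $x\mapsto x^\beta$ is nondecreasing, whence $\sup_t M_t^\beta=(M^*_T)^\beta$.

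Next I would fix an arbitrary $\QQ\in\cQ_{L_z}$. Although Lemma~\ref{ELbetaleqsupEL} is phrased under the ambient measure $\PP$, its proof uses only Markov's inequality and the layer-cake formula, hence holds verbatim under $\QQ$. Applying it to the process $X=M$ gives
$$
\EE^\QQ\big[(M^*_T)^\beta\big]
\;\le\;
\frac{1}{1-\beta}\Big(\sup_{\tau\in\cT_{0,T}}\EE^\QQ[M_\tau]\Big)^{\beta}.
$$
To control the right-hand side uniformly in $\QQ$, I would then observe that the submartingale property of $M$ under $\widehat\QQ$ yields $M_\tau\le\EE^{\widehat\QQ}\big[M_T\,\big|\,\cF^{+,\PP}_\tau\big]$ for every $\tau\in\cT_{0,T}$, which is precisely hypothesis \eqref{YleqEX} of Lemma~\ref{techLemma1} with $\zeta=M_T$ and $Y=M$. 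Since $\EE^\QQ[M_\tau]\le\cE^\PP[M_\tau]$ for each $\QQ$, that lemma gives
$$
\sup_{\tau\in\cT_{0,T}}\EE^\QQ[M_\tau]
\;\le\;
\sup_{\tau\in\cT_{0,T}}\cE^\PP[M_\tau]
\;\le\;
\cE^\PP[M_T].
$$
Combining the two displays and then taking the supremum over $\QQ\in\cQ_{L_z}$ produces $\cE^\PP\big[(M^*_T)^\beta\big]\le\frac{1}{1-\beta}\cE^\PP[M_T]^\beta$, which is the asserted bound.

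The hard part is conceptual rather than computational: the submartingale property is assumed under the single measure $\widehat\QQ$, whereas the conclusion must hold for the supremum over the whole family $\cQ_{L_z}$. The device that bridges this gap is Lemma~\ref{techLemma1}, whose proof leverages the concatenation-stability $\QQ\otimes_\tau\widehat\QQ\in\cQ_{L_z}$ to transfer the single-measure dominance $M_\tau\le\EE^{\widehat\QQ}[M_T\mid\cF^{+,\PP}_\tau]$ into the genuinely nonlinear estimate $\sup_\tau\cE^\PP[M_\tau]\le\cE^\PP[M_T]$. Once that bridge is in place, the remaining manipulations—interchanging the suprema over $\QQ$ and over $\tau$, and passing the increasing map $x\mapsto x^\beta$ through the supremum—are routine.
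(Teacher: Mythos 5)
Your proof is correct and is essentially the paper's own argument, merely modularized: where the paper inlines the first-passage-time/layer-cake computation and the concatenation step $\QQ\otimes_{\tau_x\wedge T}\widehat\QQ\in\cQ_{L_z}$ at the specific stopping time $\tau_x\wedge T$, you invoke Lemma \ref{ELbetaleqsupEL} (applied under each $\QQ$) and Lemma \ref{techLemma1} (with $Y=M$, $\zeta=M_T$), whose proofs contain exactly those steps. The two points requiring care — that Lemma \ref{ELbetaleqsupEL} holds verbatim under any probability measure, and that optional sampling for the nonnegative c\`adl\`ag submartingale $M$ at bounded stopping times yields hypothesis \eqref{YleqEX} — are both addressed correctly.
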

  
 \begin{proof}
   Let $x>0$ and $\QQ\in\cQ_{L_z}$ be arbitrary. Define 
     $\tau_x:=\inf\left\{t\geq 0\,\big|\, M_t\geq x\right\}$,
    with the usual convention that $\inf\emptyset =\infty$. 
   From the optional sampling theorem and the definition of concatenation, we obtain that
    \begin{equation*}
      \begin{aligned}
        \EE^\QQ\left[M_{\tau_x\wedge T}\right]  
                 \leq \EE^{\QQ}\left[\EE^{\widehat\QQ}\big[M_T\big|\cF_{\tau_x\wedge T}^{+,\PP}\big]\right] 
                 = \EE^{\QQ\otimes_{\tau_x\wedge T}\widehat\QQ}\left[M_T\right],
      \end{aligned}
    \end{equation*}
    for each $\QQ\in\cQ_{L_z}$. 
   As $\QQ\otimes_{\tau_x\wedge T}\widehat\QQ\in\cQ_{L_z}$, this provides that $ \EE^\QQ\left[M_{\tau_x\wedge T}\right] \leq \cE^\PP\left[M_{T}\right]=:c$.
   
   Let us denote $M_*:=\sup_{0\leq t\leq T}M_t.$
   It follows that
    $$ x\QQ\left[M_*\geq x\right] = x\QQ[\tau_x\leq T] \leq \EE^\QQ\big[M_{\tau_x}\Ind_{\{\tau_x\leq T\}}\big] \leq \EE^\QQ\big[M_{\tau_x\wedge T}\big]\leq c. $$
   Then, 
     \begin{equation*}
      \begin{aligned}
       \EE^\QQ\big[M_*^\beta\big] 
        = \int_0^\infty\QQ\left[M_*\geq x\right]\beta x^{\beta-1}dx  
        \leq \int_0^\infty\left(1\wedge\frac{c}{x}\right)\beta x^{\beta-1}dx = \frac{c^\beta}{1-\beta}.       
      \end{aligned}
     \end{equation*}
   As $\QQ\in\cQ_{L_z}$ is arbitrary, the assertion follows. 
 \end{proof}
 
\subsection{A priori estimates for reflected backward SDEs}

We will construct a solution of the RBSDE \eqref{eq:RBSDE}, using a sequence of $\LL^2$-solutions to the related RBSDEs. 
The following a priori estimation is crucial for the existence result. 

\begin{proposition} \label{EstimationZNK}
 Let $(Y,Z,K)$ be a solution of RBSDE \eqref{eq:RBSDE}. 
 For all $\beta\in(0,1)$, there exists a constant $C_{\beta,L,T}>0$ such that
  \begin{align*}
     \|Z\|_{\cH^\b} + \|K\|_{\cS^\b}
      &\leq C_{\beta,L,T}\bigg(\|Y\|_{\cS^\b} + \|Y\|_{\cD}^\beta + \cE^{\PP}\bigg[\int_0^T\big|f^0_s\big|ds \bigg]^{\beta}\bigg). 
  \end{align*}
\end{proposition}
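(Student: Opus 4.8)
The plan is to derive the estimate for $\|Z\|_{\cH^\beta}$ and $\|K\|_{\cS^\beta}$ directly from the RBSDE dynamics by an It\^o/linearization argument, exploiting the change of measure that absorbs the $z$-dependence of the generator. First I would use the Lipschitz Assumption~\ref{assum:BSDE.Lipschitz} and the monotonicity normalization of Remark~\ref{rem:monotonicity} to write $f_s(Y_s,Z_s) = f^0_s + a_s Y_s + b_s\cdot\widehat\sigma_s Z_s$ for some progressively measurable $a$, $b$ bounded by $L$, where I absorb the $y$-dependence into a drift term and the $z$-dependence into the Girsanov kernel $\lambda = b$. Passing to the measure $\widehat\QQ = \QQ^{b}\in\cQ_{L_z}$ under which $B^{b}_t := B_t - \int_0^t\widehat\sigma_s b_s\,ds$ is a martingale, the RBSDE becomes $dY_t = -(f^0_t + a_tY_t)\,dt + Z_t\cdot dB^{b}_t - dK_t$, so that under $\widehat\QQ$ the process $Z\cdot dB^{b}$ carries the martingale part and $K$ is the nondecreasing finite-variation part.

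\medskip

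Next I would apply It\^o's formula to $|Y_t|^2$ (or work on the semimartingale $Y$ directly) to obtain the energy identity
\begin{equation*}
  \int_0^T\big|\widehat\sigma_s^{\bf T}Z_s\big|^2ds
  = |Y_0|^2 - |Y_T|^2 + 2\int_0^T Y_s\,(f^0_s + a_sY_s)\,ds + 2\int_0^T Y_{s-}\,dK_s - 2\int_0^T Y_s\,Z_s\cdot dB^{b}_s.
\end{equation*}
The Skorokhod condition $\int_0^T (Y_{t-}-S_{t-})\,dK_t = 0$ lets me replace $Y_{s-}\,dK_s$ by $S_{s-}\,dK_s$, which is controlled by $S^+\in\cD$; combined with the bound on $a$ this gives $\int_0^T|\widehat\sigma^{\bf T}Z_s|^2ds$ in terms of $\|Y\|_{\cS}$, $\int_0^T|f^0_s|ds$, $S^+$, and $K_T$. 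For $K_T$ itself I would read off from the dynamics that
\begin{equation*}
  K_T = Y_0 - Y_T - \int_0^T(f^0_s + a_sY_s)\,ds + \int_0^T Z_s\cdot dB^{b}_s,
\end{equation*}
so $K_T$ is bounded by $\|Y\|_{\cS}$, $\int_0^T|f^0_s|\,ds$, and the stochastic integral $\int_0^\cdot Z\cdot dB^b$, which is a $\widehat\QQ$-martingale.

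\medskip

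The genuinely delicate point — and the reason the exponent is $\beta<1$ rather than $1$ — is that I must control $\cE^{\PP}$-norms (a supremum over all $\QQ\in\cQ_{L_z}$), not just a single-measure expectation, and the quadratic It\^o estimate naturally produces $L^2$ quantities that are not a priori integrable in the $\LL^1$ setting. The strategy here is the standard truncation/localization of \cite{BDHPS03}: I would raise the energy identity to the power $\beta/2$, use subadditivity of $x\mapsto x^{\beta/2}$ for $\beta<1$ to split the right-hand side, and then apply the Burkholder--Davis--Gundy inequality to the martingale term $\int_0^\cdot Z\cdot dB^b$ to re-express $\cE^{\PP}[(\int_0^T|\widehat\sigma^{\bf T}Z|^2ds)^{\beta/2}]$ in terms of $\cE^{\PP}[(\sup_t|\int_0^t Z\cdot dB^b|)^\beta]$. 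The crucial self-improving step absorbs the martingale term back into the left-hand side using Young's inequality $ab\le \epsilon a^2 + C_\epsilon b^2$ applied at the level of $\beta$-powers, which is where the constant $C_{\beta,L,T}$ and the blow-up as $\beta\uparrow 1$ originate. The supremum over $\QQ\in\cQ_{L_z}$ is handled throughout by the key Doob-type Lemma~\ref{supDoob} and Lemma~\ref{techLemma1}, which convert single-measure submartingale bounds into $\cS^\beta$-estimates under the nonlinear expectation $\cE^\PP$. I expect this self-referential absorption of the $Z$-term — making the BDG and Young constants compatible so that the $\|Z\|_{\cH^\beta}$ contribution can be moved to the left — to be the main obstacle.
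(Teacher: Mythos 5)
Your skeleton (Girsanov linearization to absorb the $z$-dependence, It\^o on $Y^2$, BDG plus Young absorption, reading $K_T$ off the dynamics) is indeed the paper's strategy, but the proposal misses the one step that actually makes the proposition work. The bound you derive for $K_T$ holds only under the \emph{single} measure $\QQ^{b}$ whose Girsanov kernel is aligned with $Z$, because only under that measure is $\int_0^\cdot Z\cdot dB^{b}$ a (local) martingale whose expectation vanishes. The norm $\|K\|_{\cS^\beta}$, however, is $\sup_{\QQ'\in\cQ_{L_z}}\EE^{\QQ'}\big[K_T^\beta\big]$, a supremum over \emph{all} admissible measures, and the two lemmas you invoke cannot perform this upgrade: Lemma \ref{supDoob} bounds $\|M\|_{\cS^\beta}$ by $\cE^{\PP}[M_T]^\beta$, i.e.\ its input is precisely the all-measure expectation of $M_T$, which for $M=K$ is the unknown quantity; Lemma \ref{techLemma1} requires the pointwise domination $K_\tau\leq\EE^{\widehat\QQ}\big[\zeta\big|\cF_\tau^{+,\PP}\big]$ by a fixed integrable $\zeta$, which fails for $K$ since its dynamics contain the stochastic integral $\int_0^\tau Z\cdot dB^{b}$ evaluated \emph{at} $\tau$, not a conditional expectation. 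If you instead try to control that integral under an arbitrary $\QQ^{\lambda'}$ you need $\|Z\|_{\cH^\beta}$, whose estimate in turn needs $\|K\|$: a circularity. The paper breaks it with a density-ratio H\"older argument that your proposal does not contain: for any kernel $\lambda'$ bounded by $L_z$, with $q=\frac{1}{1-\beta}$,
\begin{equation*}
  \EE^{\QQ^{\lambda'}}\big[K_T^\beta\big]
  \;\leq\;
  \EE^{\QQ^{\lambda}}\left[\left(\frac{d\QQ^{\lambda'}}{d\PP}\Big(\frac{d\QQ^{\lambda}}{d\PP}\Big)^{-1}\right)^{q}\right]^{\frac1q}
  \EE^{\QQ^{\lambda}}\big[K_T\big]^\beta
  \;\leq\;
  e^{3qL_z^2T}\,\EE^{\QQ^{\lambda}}\big[K_T\big]^\beta,
\end{equation*}
where the exponential-moment bound uses only that all kernels are bounded by $L_z$. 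This is where $\beta<1$ genuinely enters the $K$-estimate, it makes the $K$-bound independent of $Z$, and the subsequent $Z$-estimate (performed under an \emph{arbitrary} $\QQ^{\lambda'}$, the drift mismatch $|\widehat\sigma_s^{\bf T}Z_s\cdot(\lambda'_s-b_s)|\leq 2L_z|\widehat\sigma_s^{\bf T}Z_s|$ being handled by the Lipschitz bound) then closes without circularity. For the same reason, presenting the whole argument "under $\widehat\QQ=\QQ^{b}$" is not enough even for $\|Z\|_{\cH^\beta}$; the It\^o/BDG/Young computation must be run under every $\QQ^{\lambda'}$, uniformly in $\lambda'$.

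A second, smaller, discrepancy: using the Skorokhod condition to replace $\int_0^T Y_{s-}\,dK_s$ by $\int_0^T S_{s-}\,dK_s$ imports the obstacle into your final bound, whereas the right-hand side of the proposition contains no $S$-term. The paper never uses the Skorokhod condition in this proof; it simply bounds $\big|\int_0^T Y_{s-}\,dK_s\big|\leq\sup_{t}|Y_t|\,K_T$, splits by Young's inequality, and controls the resulting $K_T^\beta$ by the Step-1 estimate above. Your detour is repairable (since $S_{s-}\leq Y_{s-}$ and $dK\geq 0$ one has $\int_0^T S_{s-}\,dK_s\leq\int_0^T|Y_{s-}|\,dK_s$), but as written it proves an estimate with an extra $\|S^+\|$-dependence, i.e.\ a different statement.
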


\begin{proof}
 \noindent \textbf{Step 1.} We first derive the following estimate of $K$: 
  \begin{align} \label{estimationKeq0}
    \|K\|_{\cS^\b} 
      \leq 
    C^K_{\beta,L,T}\bigg(\|Y\|_{\cD}^\beta + \cE^{\PP}\bigg[\int_0^T\big|f^0_s\big|ds\bigg]^\beta\bigg),
  \end{align}
  where $C^K_{\beta,L,T}$ is a positive constant depending on $\beta, L_y, L_z$ and $T$. 
  
  Indeed, it follows from \eqref{eq:RBSDE} and Assumption \ref{assum:BSDE.Lipschitz} that 
    \begin{align*}
       K_t &\leq |Y_0| + |Y_t| + \int_0^t\big|f^0_s\big|ds + L_y\int_0^t|Y_s|ds + L_z\int_0^t\big|\widehat\sigma_s^\intercal Z_s\big|ds + \int_0^t\widehat\sigma_s^{\intercal}Z_s\cdot dW_s.
    \end{align*}
  Define
   \begin{align} \label{speciallambda}
     \lambda_s:=L_z\frac{\widehat\sigma_s^{\intercal}Z_s}{|\widehat\sigma_s^{\intercal}Z_s|}\Ind_{\{|\widehat\sigma_s^{\intercal}Z_s|\neq 0\}}
          \q\mbox{and}\q
      \frac{d\QQ^\lambda}{d\PP}:= \exp\left\{-\int_0^T\lambda_s\cdot dW_s - \frac{1}{2}\int_0^T|\lambda_s|^2ds\right\}. 
   \end{align}
  Let $\{\tau_k\}_{k\in\NN}$ be a localizing sequence of stopping times, such that the stochastic integral is a martingale. 
  We have 
   \begin{align*}
    K_{\tau_k} &\leq |Y_0| + |Y_{\tau_k}| + \int_0^T|f^0_s|ds + L_y\int_0^T|Y_s|ds + \int_0^{\tau_k}\widehat\sigma_s^{\intercal}Z_s\cdot \left(dW_s + \lambda_sds\right).
   \end{align*}
 Taking expectation with respect to $\QQ^{\lambda}$ we obtain 
  \begin{align*}
   \EE^{\QQ^{\lambda}}[K_{\tau_k}] \leq (2+L_yT)\sup_{\tau\in\cT_{0,T}}\EE^{\QQ^{\lambda}}[|Y_\tau|] + \EE^{\QQ^{\lambda}}\left[\int_0^T\big|f^0_s\big|ds\right].
  \end{align*}
 It follows by monotone convergence theorem that 
  \begin{align}  \label{controlKeq1}
    \EE^{\QQ^{\lambda}}[K_T] \leq (2+L_yT)\sup_{\tau\in\cT_{0,T}}\cE^{\PP}[|Y_\tau|] + \cE^{\PP}\left[\int_0^T\big|f^0_s\big|ds\right]<\infty. 
  \end{align}
 Now take any $\|\lambda'\|_\infty\le L_z$. By the Girsanov transformation and the H\"older inequality with $\frac{1}{q}=1-\beta$, we have
  \begin{align*}
      & \EE^{\QQ^{\lambda'}} \left[K_T^\beta\right] = \EE^{\QQ^{\lambda}}\left[\frac{d\QQ^{\lambda'}}{d\PP}\left(\frac{d\QQ^{\lambda}}{d\PP}\right)^{-1}K_T^\beta\right] 
         \leq \EE^{\QQ^{\lambda}}\left[\left(\frac{d\QQ^{\lambda'}}{d\PP}\left(\frac{d\QQ^{\lambda}}{d\PP}\right)^{-1}\right)^q\right]^{\frac{1}{q}}\EE^{\QQ^{\lambda}}[K_T]^\beta \\
      & = \EE^{\QQ^{\lambda}}\left[\exp\left(\int_0^Tq(\lambda_s-\lambda'_s)\cdot dW_s - \frac{q}{2}\int_0^T\big(|\lambda_s|^2-|\lambda'_s|^2\big)ds\right)\right]^{\frac{1}{q}}\EE^{\QQ^{\lambda}}[K_T]^\beta  \\
      & = \EE^{\PP}\left[\exp\left(\int_0^T\big((q-1)\lambda_s-q\lambda'_s\big)\cdot dW_s - \frac{1}{2}\int_0^T\big((q-1)|\lambda_s|^2-q|\lambda'_s|^2\big)ds\right)\right]^{\frac{1}{q}}\EE^{\QQ^{\lambda}}[K_T]^\beta  \\
      & = \EE^{\PP}\Big[\exp\Big(\int_0^T\big((q-1)\lambda_s-q\lambda'_s\big)\cdot dW_s -\frac{1}{2}\int_0^T\big|(q-1)\lambda_s-q\lambda'_s\big|^2ds\Big) \\
      & \hspace{12mm} \cdot \exp\Big(\frac{1}{2}\int_0^T\underbrace{\big|(q-1)\lambda_s-q\lambda'_s\big|^2}_{\leq 4q^2L_z^2}ds 
                              - \frac{1}{2}\int_0^T\underbrace{\big((q-1)|\lambda_s|^2-q|\lambda'_s|^2\big)}_{\leq 2q L_z^2\leq 2q^2L_z^2}ds \Big)\Big]^{\frac{1}{q}}\EE^{\QQ^{\lambda}}[K_T]^\beta  \\
      & \leq \exp\big(3qL_z^2T\big)\EE^{\QQ^{\lambda}}[K_T]^\beta.
  \end{align*}
 As $\lambda'$ is arbitrary, together with \eqref{controlKeq1} we obtain \eqref{estimationKeq0}.

\vspace{3mm}  
 
\noindent \textbf{Step 2.} We next estimate the stochastic integral $\int_0^TY_{s-}\big(\widehat\sigma_s^{\intercal}Z_s\cdot dW_s^\lambda-dK_s\big)$.
 Using Burkholder-Davis-Gundy inequality and Young's inequality, we obtain
 \allowdisplaybreaks
 \begin{align}  \label{int_0^TY_sdM^lambda_s}
    \EE^{\QQ^\lambda} & \left[\left|\int_0^T Y_{s-}\big(\widehat\sigma_s^{\intercal}Z_s\cdot dW_s^\lambda-dK_s\big)\right|^{\frac{\beta}{2}}\right]\nonumber \\
      & \leq \EE^{\QQ^\lambda}\left[\sup_{0\leq u\leq T}\left|\int_0^u Y_{s-}\widehat\sigma_s^{\intercal}Z_s\cdot dW_s^\lambda\right|^{\frac{\beta}{2}} + \left|\int_0^T Y_{s-}dK_s\right|^{\frac{\beta}{2}}\right] \nonumber \\
   & \leq C_\beta \EE^{\QQ^\lambda}\left[\left(\int_0^TY_{s-}^2\big|\widehat\sigma_s^{\intercal} Z_s\big|^2ds\right)^{\frac{\beta}{4}}\right] 
          + \EE^{\QQ^\lambda}\left[\sup_{0\leq s\leq T}|Y_{s}|^{\frac{\beta}{2}}K_T^{\frac{\beta}{2}}\right] \nonumber \\
   & \leq C_\beta \EE^{\QQ^\lambda}\left[\sup_{0\leq s\leq T} |Y_s|^{\frac{\beta}{2}}\left(\int_0^T\big|\widehat\sigma_s^{\intercal}Z_s\big|^2ds\right)^{\frac{\beta}{4}}\right] 
          + \EE^{\QQ^\lambda}\left[\sup_{0\leq s\leq T}|Y_{s}|^{\frac{\beta}{2}}K_T^{\frac{\beta}{2}}\right] \nonumber \\ 
   & \leq \frac{C_\beta+1}{2\varepsilon}\|Y\|_{\SSS^{\beta}(\dbQ^\lambda)} + \frac{C_\beta\varepsilon}{2}\|Z\|_{\HH^\beta(\dbQ^\lambda)} + \frac{\varepsilon}{2}\|K\|_{\II^\beta(\dbQ^\lambda)}.
 \end{align}
 
\vspace{3mm}

\noindent \textbf{Step 3.}  
 We now show that
 \begin{align}  \label{eq:EstimateZ}
   \|Z\|_{\cH^\b} \leq C^Z_{L,T,\beta}\left(\|Y\|_{\cS^\b}+ \|Y\|_{\cD}^\beta + \cE^{\PP}\left[\int_0^T\big|f^0_s\big|ds\right]^\beta\right).
 \end{align}

 Under a measure $\dbQ^\lambda\in \cQ_{L_z}$, the RBSDE \eqref{eq:RBSDE} satisfies
  $$ dY_t = -\big(f_t(Y_t,Z_t)-\widehat\sigma^{\intercal}_tZ_t\cdot\lambda_t\big)dt + \widehat\sigma_t^{\intercal}Z_t\cdot dW^\lambda_t -dK_t,\quad \dbQ^\lambda\mbox{-a.s.} $$
 Applying It\^o's formula on $Y^2$, we obtain 
  \begin{align*}
    \int_0^T \big|\widehat\sigma_s^{\intercal}Z_s\big|^2ds + \sum_{0< s\leq T}(\Delta K_s)^2
      = \xi^2 -Y^2_0 &+ \int_0^T 2 Y_{s-}\big(f_s(Y_s,Z_s)-\widehat\sigma_s^{\intercal}Z_s\cdot\lambda_s\big)ds \\
     &- \int_0^T 2Y_{s-}\left(\widehat\sigma_s^{\intercal}Z_s\cdot dW^\lambda_s - dK_s\right).
  \end{align*}
 Therefore, by Assumption \ref{assum:BSDE.Lipschitz} and Young's inequality, we obtain
 \allowdisplaybreaks
 \begin{align*}
  \frac12 &\int_0^T\big|\widehat\sigma_s^{\intercal}Z_s\big|^2ds \\ 
    &\leq (3+2L_yT+8L_z^2T)\sup_{0\leq s\leq T}Y_s^2 + \left(\int_0^T\big|f^0_s\big|ds\right)^2 + 2\left|\int_0^T Y_{s-}\big(\widehat\sigma_s^{\intercal}Z_s\cdot dW_s^\lambda-dK_s\big)\right|.
 \end{align*}
 Together with \eqref{int_0^TY_sdM^lambda_s}, we have for $\beta\in(0,1)$
  \begin{align} 
    & \|Z\|_{\HH^\beta(\QQ^\lambda)} \nonumber \\
        &\leq C_{L,T,\beta}\Bigg(\|Y\|_{\SSS^\beta(\QQ^\lambda)} + \EE^{\QQ^{\lambda}}\left[\left(\int_0^T\big|f^0_s\big|ds\right)^{\beta}\right]
                    +\EE^{\QQ^{\lambda}}\Bigg[\left|\int_0^T Y_{s-}\left(\widehat\sigma_s^{\intercal}Z_s\cdot dW^\lambda_s - dK_s\right)\right|^{\frac{\beta}{2}}\Bigg]\Bigg)  \nonumber \\
    & \leq C_{L,T,\beta}\left(\frac{C_\beta+1}{2\varepsilon}+1\right)\|Y\|_{\SSS^\beta(\QQ^\lambda)} + C_{L,T,\beta}\EE^{\QQ^{\lambda}}\left[\left(\int_0^T\big|f^0_s\big|ds\right)^{\beta}\right] \nonumber \\
        & \hspace{6mm} + \frac{C_{L,T,\beta}C_\beta\varepsilon}{2}\big\|Z\big\|_{\HH^\beta(\QQ^\lambda)} + \frac{C_{L,T,\beta}\varepsilon}{2}\|K\|_{\II^\beta(\QQ^\lambda)}.   \nonumber          
  \end{align}
  Choosing $\varepsilon=\frac{1}{C_{L,T,\beta}C_\beta}$, we get 
   \begin{align*}
     \|Z\|_{\HH^\beta(\QQ^\lambda)} \leq C'_{L,T,\beta}\left(\|Y\|_{\SSS^\beta(\QQ^\lambda)} + \EE^{\QQ^{\lambda}}\left[\left(\int_0^T\big|f^0_s\big|ds\right)^{\beta}\right] + \|K\|_{\II^\beta(\QQ^\lambda)}\right).
   \end{align*}
  Together with \eqref{estimationKeq0} we obtain \eqref{eq:EstimateZ}. 
\end{proof}

\subsection{Existence and Uniqueness}

We apply the existing well-posedness result of $\LL^2$ solutions of reflected BSDEs to establish the well-posedness result of our $\LL^1$ solutions of reflected BSDEs. 
In the first step, we show the well-posedness of RBSDEs with a $\LL^2$ barrier and $\LL^1$ generator $f$ and terminal data $\xi$ by an approximation with solutions of RBSDEs with $f^n$ and $\xi^n$ satisfying $\LL^2$ condition. 
In the second step, we prove our existence and uniqueness of $\LL^1$ solutions with $\LL^1$ barrier by an approximation with solutions of RBSDEs with $\LL^2$ barriers and $\LL^1$ generator and terminal data obtained from step 1. 
As a special case $S=-\infty$ we obtain the existence and uniqueness result of $\LL^1$-solutions of reflected BSDEs.  

\subsubsection{Square integrable obstacle}

\begin{theorem}  \label{ExistenceUniqueness112}
 Let Assumption \ref{assum:BSDE.Lipschitz} hold true. Assume that $S^+\in\cS^2$, 
%   \begin{equation}
%     \lim_{n\to\infty}\cE^{\cQ_L}\left[|\xi|\Ind_{\{|\xi|\geq n\}} + \int_0^T\big|f_s^0\big|\Ind_{\{|f^0_s|\geq n\}}ds\right] = 0. 
%   \end{equation}
   then Theorem \ref{RBSDEmainthm} holds true. 
% Then, there is a unique solution $(Y,Z,N,K)\in \SSS^\beta(\cQ_L)\times \HH^\beta(\cQ_L)\times \NN^\beta(\cQ_L)\times \SSS^\beta(\cQ_L)$ for each $\beta\in(0,1)$ and $Y\in\DD(\cQ_L)$. 
\end{theorem}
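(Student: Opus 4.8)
The plan is to obtain the desired solution as the limit of a sequence of classical $\LL^2$ reflected BSDEs with truncated data, exploiting that the obstacle is already square-integrable through the hypothesis $S^+\in\cS^2$. For each $n$, set $\xi^n:=(-n)\vee\xi\wedge n$ and replace $f^0$ by its truncation $f^{0,n}:=(-n)\vee f^0\wedge n$, which produces a generator $f^n$ still satisfying Assumption \ref{assum:BSDE.Lipschitz} with the same constants and a bounded free term. Since $\xi^n$ and $\int_0^T|f^{0,n}_s|ds$ are bounded and $S^+\in\cS^2\subseteq\SSS^2(\QQ)$ for every $\QQ\in\cQ_{L_z}$, the classical wellposedness theory of $\LL^2$ reflected BSDEs under each $\QQ\in\cQ_{L_z}$ yields a unique triple $(Y^n,Z^n,K^n)$; a Girsanov/H\"older argument as in the proof of Proposition \ref{EstimationZNK} transfers the bounds uniformly over $\cQ_{L_z}$, so that $(Y^n,Z^n,K^n)\in\cS^2\times\cH^2\times\cI^2$. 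By Remark \ref{rem:monotonicity} I may assume $F$ nonincreasing in $y$ throughout.

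First I would establish the Cauchy property of $\{Y^n\}$ in the $\cD$-norm. Linearizing the difference of the equations for indices $n,m$, the monotone change of measure $\QQ^{b}\in\cQ_{L_z}$ (with kernel $b$ bounded by $L_z$) turns $\delta Y:=Y^m-Y^n$ into the difference of two Snell envelopes built over the \emph{same} barrier $S$; hence the obstacle contributions cancel and the optimal-stopping representation yields a pointwise domination $\delta Y_\tau\le\EE^{\widehat\QQ}\big[|\delta\xi|+\int_0^T|\delta f_s|ds\,\big|\,\cF^{+,\PP}_\tau\big]$ for a fixed $\widehat\QQ\in\cQ_{L_z}$, and symmetrically for $-\delta Y$. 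Lemma \ref{techLemma1} then upgrades this to
$$
\|Y^m-Y^n\|_{\cD}\;\le\;\cE^{\PP}\Big[|\xi^m-\xi^n|+\int_0^T\big|f^{0,m}_s-f^{0,n}_s\big|ds\Big],
$$
whose right-hand side tends to $0$ as $n,m\to\infty$ precisely because of the uniform-integrability Assumption \ref{assum:BSDExi}. Applying Lemma \ref{ELbetaleqsupEL} to $|\delta Y|$ promotes this to convergence of $\{Y^n\}$ in $\cS^\beta$ for every $\beta\in(0,1)$, identifying a limit $Y\in\cS^\beta\cap\cD$.

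Next I would control $\{Z^n\}$ and $\{K^n\}$. The crucial point is that, all approximating problems sharing the same obstacle, the two Skorokhod conditions give $\int_0^T\delta Y_{s-}\,d(\delta K)_s\le0$, so the It\^o and Burkholder--Davis--Gundy computation of Proposition \ref{EstimationZNK} can be repeated for $(\delta Y,\delta Z,\delta K)$ in place of $(Y,Z,K)$, with $\int_0^T|f^0_s|ds$ replaced by $\int_0^T|\delta f_s|ds$. This gives, for every $\beta\in(0,1)$,
$$
\|\delta Z\|_{\cH^\beta}+\|\delta K\|_{\cS^\beta}
\;\le\;
C_{\beta,L,T}\Big(\|\delta Y\|_{\cS^\beta}+\|\delta Y\|_{\cD}^\beta
+\cE^{\PP}\Big[\int_0^T|\delta f_s|ds\Big]^\beta\Big),
$$
so that $\{Z^n\}$ is Cauchy in $\cH^\beta$ and $\{K^n\}$ in $\cI^\beta$, with limits $Z$ and $K$. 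Passing to the limit along a suitable subsequence in the stochastic-integral and Lebesgue terms, $(Y,Z,K)$ satisfies the RBSDE \eqref{eq:RBSDE}; the constraint $Y\ge S$ passes to the limit directly, while the flat-off identity $\int_0^T(Y_{t-}-S_{t-})dK_t=0$ follows from the $\cS^\beta$-convergence of $Y^n$ and the $\cI^\beta$-convergence of $K^n$ together with the corresponding identities for the approximations. Uniqueness is then immediate from the same difference estimates applied to two arbitrary solutions.

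The main obstacle I anticipate is the $\cD$-estimate for $\delta Y$: unlike the plain BSDE case, the reflection term is not monotone in the difference, so a naive linearization leaves an uncontrolled $\int e^{\int a}\,d(\delta K)$ term. Circumventing it requires the optimal-stopping (Snell envelope) reformulation of the reflected equation after the measure change, which lets the common barrier cancel; checking that this representation is compatible with the nonlinear expectation $\cE^{\PP}$ — in particular that the stopping times and the kernel bound $L_z$ interact correctly with the concatenation stability of $\cQ_{L_z}$ underlying Lemma \ref{techLemma1} — is the delicate step. A secondary difficulty is justifying that the flat-off Skorokhod condition survives the passage to the limit in the weak ($\beta<1$) metrics, which relies on extracting a subsequence along which $Y^n$ converges uniformly and $K^n$ converges in total variation $\PP$-a.s.
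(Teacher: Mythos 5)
Your overall scheme (truncate $\xi$ and $f^0$, solve the resulting $\LL^2$ reflected BSDEs, prove Cauchy estimates in $\cD$, $\cS^\beta$, $\cH^\beta$, and pass to the limit) is the paper's scheme. Where you differ is the key $\cD$-estimate: you propose to compare $Y^n$ and $Y^m$ through their Snell-envelope representations under the linearizing measure $\QQ^{b}$, so that the common barrier cancels in the difference of the two optimal-stopping values. This can be made to work (after the measure change the $z$-part of the linearization cancels in the rewards, and a conditional Gronwall iteration absorbs the $y$-part, at the price of a factor $e^{L_yT}$ before invoking Lemma \ref{techLemma1}), but it is heavier than what the paper does: the paper applies the Tanaka-type formula of \cite[Proposition 4.2]{LRTY18} to $|\delta Y|$ and kills the reflection term by the elementary sign identity $\sgn(\delta Y_{s-})\,d\delta K_s\le 0$, which is exactly the consequence \eqref{deltaYddeltaKleq0} of the two Skorokhod conditions for a shared obstacle; no Snell representation of the approximations is needed at this stage.

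The genuine gap is your treatment of the Skorokhod (flat-off) condition for the limit. You claim it ``passes to the limit'' from the $\cS^\beta$-convergence of $Y^n$ and the convergence of $K^n$, and you yourself note this would require a subsequence along which $K^n\to K$ \emph{in total variation}. But your Cauchy estimates only control $\sup_t|K^n_t-K_t|$, and uniform convergence of nondecreasing processes gives no control whatsoever on $\int_0^T|d(K^n-K)|$; nothing in the construction yields total-variation convergence, and it cannot be extracted. Without it, the needed passage $\int_0^T(Y_{t-}-S_{t-})\,dK^n_t\to\int_0^T(Y_{t-}-S_{t-})\,dK_t$ fails in general: the integrand $t\mapsto Y_{t-}-S_{t-}$ is only c\`adl\`ag, and the atoms of the limiting measure $dK$ sit precisely on $\{Y_{t-}=S_{t-}\}$, i.e.\ at times where $Y$ (hence the integrand) jumps, so weak convergence of $dK^n$ to $dK$ does not suffice either. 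This is exactly why the paper's Step 5 does not pass the identity to the limit at all: it localizes with stopping times $\tau_m$, derives the optimal-stopping representation \eqref{eq:snellY1} for the limit $Y$ itself, identifies $K_{\cdot\wedge\tau_m}$ with the predictable nondecreasing part of the Doob--Meyer decomposition of the corresponding Snell envelope, and invokes the flat-off property of Snell envelopes (\cite[Proposition B.11]{KQ12}, see also \cite[Proposition 2.34]{ElK81}) to conclude $\int_0^{T\wedge\tau_m}(Y_{t-}-S_{t-})\,dK_t=0$. You need an argument of this type to close the existence proof. The same caveat applies to your one-line uniqueness claim: for two arbitrary solutions in $\big(\cS^\beta\cap\cD\big)\times\cH^\beta\times\cI^\beta$ the stochastic integrals are only local martingales, so the difference estimates must first be justified through the localization $\tau_m$ and the section theorem, as in the paper's uniqueness step.
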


\begin{proof}
\textbf{Existence:}  
For each $n\in\NN$, we denote $\xi^n:=q_n(\xi)$ and $f^n_t(y,z):=f_t(y,z)-f_t^0+q_n(f^0_t)$, where $q_n(x):=\frac{xn}{|x|\vee n}$. 
As $S^+\in\cS^2$,  
 by \cite[Theorem 3.1]{BPTZ16}, RBSDE$(f^n,\xi^n,S)$ has a unique solution $(Y^n,Z^n,K^n)\in\SSS^2\times\HH^2\times\II^2$, and $Y^n$ belongs to class $\DD(\QQ)$ for each $\QQ\in\cQ_{L_z}$.
 
\ms
 
\noindent \textbf{Step 1:} We are going to show that $\{Y^n\}_{n\in\NN}$ is a Cauchy sequence in $\cS^\beta$ and $\cD$.  
Let $m,n\in\NN$ and $n\geq m$. Set $ \delta Y:=Y^n-Y^m$, $\delta Z:=Z^n-Z^m$, $\delta K:=K^n-K^m$ and $\delta f:=f^n-f^m$. 
Clearly, the process $(\delta Y,\delta Z,\delta K)$ satisfies the following equation 
 \begin{equation}  \label{deltaRBSDE}
   \delta Y_t = \delta\xi + \int_t^Tg_s(\delta Y_s,\delta Z_s)ds - \int_t^T\delta Z_s\cdot dB_s + \int_t^Td\delta K_s,
 \end{equation}
 where 
 $$ g_s(\delta Y_s,\delta Z_s):= f^n_s(Y_s^m+\delta Y_s,Z_s^m+\delta Z_s) - f^m_s(Y^m_s,Z^m_s). $$
It follows by Proposition 4.2 in \cite{LRTY20} that 
 \begin{align*}
   |\delta Y_\tau| & \leq |\delta Y_T| - \int_\tau^T\sgn(\delta Y_{s-})d\delta Y_s  \nonumber \\
     & = |\delta \xi| 
           + \int_\tau^T\sgn(\delta Y_{s})\big\{ g_s(\delta Y_s,\delta Z_s)ds - \widehat\sigma_s^{\intercal}\delta Z_s\cdot dW_s\big\} + \int_\tau^T\sgn(\delta Y_{s-})d\delta K_s.
 \end{align*}
By Assumption \ref{assum:BSDE.Lipschitz} and Remark \ref{rem:monotonicity} we obtain 
 \begin{align*}
  \sgn(\delta Y_{s})g_s(\delta Y_s,\delta Z_s) & = \sgn(\delta Y_{s})\big(f^n_s(Y_s^m+\delta Y_s,Z_s^m+\delta Z_s) - f^n_s(Y^m_s,Z^m_s+\delta Z_s)\big)  \nonumber \\
    & \hspace{5mm} + \sgn(\delta Y_{s})\big(f^n_s(Y_s^m,Z_s^m+\delta Z_s)-f^n_s(Y_s^m,Z_s^m)\big) \nonumber \\
    & \hspace{5mm} + \sgn(\delta Y_{s})\big(f^n_s(Y_s^m,Z_s^m)-f^m_s(Y_s^m,Z_s^m)\big) \nonumber \\
    & \le\; L_z\big|\widehat\sigma^{\intercal}_s\delta Z_s\big| + \big|\delta f_s(Y^m_s,Z^m_s)\big|  
    \; \le\; 
    L_z\big|\widehat\sigma^{\intercal}_s\delta Z_s\big| + \big|f^0_s\big|\Ind_{\{|f^0_s|\geq n\}}. 
 \end{align*}
We note that by Skorokhod condition, 
  \begin{align} \label{deltaYddeltaKleq0}
    \delta Y_{s-} d\delta K_s 
      & =(Y^{n}_{s-}-S_{s-})dK^{n}_s -(Y^{n}_{s-}-S_{s-})dK^m_s - (Y^m_{s-}-S_{s-})dK^{n}_s  + (Y^m_{s-}-S_{s-})dK^m_s  \nonumber \\
      & =-(Y^{n}_{s-}-S_{s-})dK^m_s - (Y^m_{s-}-S_{s-})dK^{n}_s ~ \leq 0, 
  \end{align}
 and 
  \begin{align}
    \sgn(\delta Y_{s-})d\delta K_s = \frac{\sgn(\delta Y_{s-})}{\delta Y_{s-}}\delta Y_{s-}d\delta K_s \leq 0.
  \end{align}
Therefore, we obtain
  \allowdisplaybreaks
 \begin{align*}
  |\delta Y_\tau| 
  & \leq |\xi|\Ind_{\{|\xi|\geq n\}} + \int_\tau^T\big(L_z\big|\widehat\sigma^{\intercal}_s\delta Z_s\big| + \big|f^0_s\big|\Ind_{\{|f^0_s|\geq n\}}\big)ds - \int_\tau^T\sgn(\delta Y_{s})\widehat\sigma_s^{\intercal}\delta Z_s\cdot dW_s \nonumber \\
  & = |\xi|\Ind_{\{|\xi|\geq n\}} + \int_\tau^T \big|f^0_s\big|\Ind_{\{|f^0_s|\geq n\}}ds 
                                  - \int_\tau^T \sgn(\delta Y_{s})\widehat\sigma_s^{\intercal}\delta Z_s\cdot\big(dW_s-\widehat\lambda_sds\big),
 \end{align*}
where $\widehat\lambda:=L_z\sgn(\delta Y_s)\frac{\widehat\sigma_s^{\intercal}\delta Z_s}{|\widehat\sigma_s^{\intercal}\delta Z_s|}\Ind_{\{|\widehat\sigma_s^{\intercal}\delta Z_s|\neq 0\}}$. 
As $\delta Z\in\cH^2$ and $\delta N\in\cN^2$, we deduce from Burkholder-Davis-Gundy inequality that the last term is a uniformly integrable martingale under the measure $\widehat\QQ\in\cQ_{L_z}$ 
  such that $\frac{d\widehat{\QQ}}{d\PP}:=G^{\widehat\lambda}_T. $
Taking conditional expectation with respect to $\cF_\tau^{+,\PP}$ under the equivalent measure $\widehat\QQ$, we obtain that 
 $$ |\delta Y_\tau| \leq \EE^{\widehat\QQ}\left[|\xi|\Ind_{\{|\xi|\geq n\}} + \int_0^T\big|f^0_s\big|\Ind_{\{|f^0_s|\geq n\}}ds\bigg|\cF_\tau^{+,\PP}\right]. $$
We deduce immediately from Lemma \ref{techLemma1} that 
 \begin{equation}
   \|\delta Y\|_{\cD}
   \leq \cE^{\PP}\left[|\xi|\Ind_{\{|\xi|\geq n\}} + \int_0^T\big|f^0_s\big|\Ind_{\{|f^0_s|\geq n\}}ds\right], 
 \end{equation}
 and from Lemma \ref{supDoob} that for any $\beta\in(0,1)$, 
  \begin{equation}
   \|\delta Y\|_{\cS^\beta} 
    \leq \frac{1}{1-\beta}\cE^{\PP}\left[|\xi|\Ind_{\{|\xi|\geq n\}} + \int_0^T\big|f^0_s\big|\Ind_{\{|f^0_s|\geq n\}}ds\right]^\beta.
  \end{equation}
This shows that $\{Y^n\}_{n\in\NN}$ is a Cauchy sequence in $\cD$ and $\cS^\beta$. By completeness of $\cD$ and $\cS^\beta$, there exists a limit $Y\in\cD\cap\cS^\beta$.

\ms

\noindent \textbf{Step 2:} We prove that $\{Z^n\}_{n\in\NN}$ is a Cauchy sequence in $\cH^\beta$.
By It\^o's formula, we have 
 \begin{align}
  (\delta Y_T)^2 - (\delta Y_0)^2 
    & = -2\int_0^T\delta Y_{s-}g_s(\delta Y_s,\delta Z_s)ds + 2\int_0^T\delta Y_{s-}\widehat\sigma_s^{\intercal}\delta Z_s\cdot dW_s   \nonumber \\
    & \quad -2\int_0^T\delta Y_{s-}d\delta K_s + \int_0^T\big|\widehat\sigma_s^{\intercal}\delta Z_s\big|^2ds + [\delta K]_T,  \nonumber 
 \end{align}
 and therefore by Assumption \ref{assum:BSDE.Lipschitz}, Skorokhod condition \eqref{deltaYddeltaKleq0} and Young's inequality
  \allowdisplaybreaks
 \begin{align}
  \int_0^T \big|\widehat\sigma_s^{\intercal}\delta Z_s\big|^2ds 
     &\leq \sup_{0\leq s\leq T}(\delta Y_s)^2 + 2\int_0^T |\delta Y_s|\big|f^0_s\big|\Ind_{\{|f^0_s|\geq n\}}ds + 2L_z\int_0^T|\delta Y_s|\big|\widehat\sigma_s^{\intercal}\delta Z_s\big|ds \nonumber \\
       & \hspace{25mm} - 2\int_0^T\delta Y_{s} \widehat\sigma_s^{\intercal}\delta Z_s\cdot dW_s  \nonumber  \\
     &\leq \sup_{0\leq s\leq T}(\delta Y_s)^2 + 2\int_0^T|\delta Y_s|\big|f^0_s\big|\Ind_{\{|f^0_s|\geq n\}}ds + 4L_z\int_0^T|\delta Y_s|\big|\widehat\sigma_s^{\intercal}\delta Z_s\big|ds \nonumber \\
       & \hspace{25mm} + 2\sup_{0\leq u\leq T}\left|\int_0^u\delta Y_{s}\widehat\sigma_s^{\intercal}\delta Z_s\cdot dW^\lambda_s\right|   \nonumber  \\
     &\leq (2+8L_z^2T)\sup_{0\leq s\leq T}(\delta Y_s)^2 + \left(\int_0^T\big|f^0_s\big|\Ind_{\{|f^0_s|\geq n\}}ds\right)^2 + \frac{1}{2}\int_0^T\big|\widehat\sigma_s^{\intercal}\delta Z_s\big|^2ds  \nonumber \\
     & \hspace{25mm} + 2\sup_{0\leq u\leq T}\left|\int_0^u\delta Y_{s}\widehat\sigma_s^{\intercal}\delta Z_s\cdot dW^\lambda_s\right|,   \nonumber 
 \end{align}
 which implies that 
 \begin{align}\label{itosameS}
  \int_0^T & \big|\widehat\sigma_s^{\intercal}\delta Z_s\big|^2ds \leq 2(2+8L_z^2T)\sup_{0\leq s\leq T}(\delta Y_s)^2 + 2\left(\int_0^T\big|f^0_s\big|\Ind_{\{|f^0_s|\geq n\}}ds\right)^2 \nonumber \\
    & \hspace{25mm} + 4\sup_{0\leq u\leq T}\left|\int_0^u\delta Y_{s}\widehat\sigma_s^{\intercal}\delta Z_s\cdot dW^\lambda_s\right|.
 \end{align}
 Taking expectation we obtain 
  \begin{align*}
    \|\d Z\|_{\dbH^\b(\QQ^\lambda)}   &\leq C'_{\beta,L,T}\Bigg(\|\d Y\|_{\dbS^\b(\QQ^\lambda)} + \EE^{\QQ^\lambda}\left[\left(\int_0^T\big|f^0_s\big|\Ind_{\{|f^0_s|\geq n\}}ds\right)^\beta\right] \nonumber \\
      & \hspace{37mm} + \EE^{\QQ^\lambda}\left[\sup_{0\leq u\leq T}\left|\int_0^u\delta Y_{s}\widehat\sigma_s^{\intercal}\delta Z_s\cdot dW^\lambda_s\right|^{\frac{\beta}{2}}\right]\Bigg).
  \end{align*}
 By Burkholder-Davis-Gundy inequality, we have
  \begin{align*}
   \EE^{\QQ^\lambda} \left[\sup_{0\leq u\leq T}\left|\int_0^u\delta Y_{s}\widehat\sigma_s^{\intercal}\delta Z_s\cdot dW^\lambda_s\right|^{\frac{\beta}{2}}\right]
      & \leq C'_\beta \EE^{\QQ^\lambda}\left[\left(\int_0^T|\delta Y_s|^2\big|\widehat\sigma_s^{\intercal}\delta Z_s\big|^2ds\right)^{\frac{\beta}{4}}\right] \nonumber \\
      & \leq \frac{C'_\beta}{2\varepsilon} \|\d Y\|_{\dbS^\b(\QQ^\lambda)} + \frac{C'_\beta\varepsilon}{2} \|\d Z\|_{\dbH^\b(\QQ^\lambda)}.
  \end{align*}
 Choosing $\varepsilon:=\frac{1}{C'_{\beta,L,T}C'_\beta}$, we obtain by Jensen's inequality for $\beta\in(0,1)$ that
  \begin{align*}
     \|\d Z\|_{\dbH^\b(\QQ^\lambda)}
       \leq C'_{\beta,L,T}\big(2+(C'_\beta)^2C'_{\beta,L,T}\big) \|\d Y\|_{\dbS^\b(\QQ^\lambda)} + 2C'_{\beta,L,T}\EE^{\QQ^\lambda}\left[\left(\int_0^T\big|f^0_s\big|\Ind_{\{|f^0_s|\geq n\}}ds\right)\right]^\beta.
  \end{align*}
 Therefore, 
  \begin{align} \label{deltaZleqdeltaY+deltaf}
    & \|\delta Z\|_{\cH^\b} 
    \leq C^Z_{\beta,L,T}\Bigg(\|\delta Y\|_{\cS^\b} + \cE^{\PP}\left[\left(\int_0^T\big|f^0_s\big|\Ind_{\{|f^0_s|\geq n\}}ds\right)\right]^\beta\Bigg).  
  \end{align}
 As the right-hand side converges to $0$ for $m,n\to\infty$, $\{Z^n\}_{n\in\NN}\subseteq\cH^\beta$ is a Cauchy sequence. 
 By completeness of $\cH^\beta$, there exists a limit $Z\in\cH^\beta$. 
 
\ms

\noindent \textbf{Step 3:} We next show that $\{K^n\}_n$ is a Cauchy sequence in $\cS^\beta$. 
By \eqref{deltaRBSDE}, we have
 $$ \delta K_t = -\delta Y_t + \delta Y_0 - \int_0^t\big(g_s(\delta Y_s,\delta Z_s) + \widehat\sigma_s^{\intercal}\delta Z_s\cdot\lambda_s\big)ds + \int_0^t\widehat\sigma_s^{\intercal}\delta Z_s\cdot dW^\lambda_s,  $$
 and therefore, together with Assumption \ref{assum:BSDE.Lipschitz}
 \begin{align*}
  & \sup_{0\leq s\leq T}|\delta K_s| \\
       & \, \leq 2\sup_{0\leq s\leq T}|\delta Y_s| + \int_0^T\left(L_y|\delta Y_s|+2L_z\big|\widehat\sigma_s^{\intercal}\delta Z_s\big| + \big|\delta f_s(Y^m_s,Z^m_s)\big|\right)ds + \sup_{0\leq u\leq T}\left|\int_0^u\widehat\sigma_s^{\intercal}\delta Z_s\cdot dW^\lambda_s\right|  \nonumber \\
       & \, \leq (2+TL_y)\sup_{0\leq s\leq T}|\delta Y_s| + \int_0^T \big( 2L_z \big|\widehat\sigma_s^{\intercal}\delta Z_s\big|ds + \big|f^0_s\big|\Ind_{\{|f^0_s|\geq n\}}\big)ds + \sup_{0\leq u\leq T}\left|\int_0^u\widehat\sigma_s^{\intercal}\delta Z_s\cdot dW^\lambda_s\right|.
 \end{align*}
Applying Cauchy-Schwarz inequality and Burkholder-Davis-Gundy inequality, we obtain that for $\beta\in(0,1)$
 \begin{align*}
\|\d K\|_{\dbS^\b(\QQ^\lambda)}
         &\leq (2+TL_y)^\beta\|\d Y\|_{\dbS^\b(\QQ^\lambda)}
            + \big((2L_z)^\beta T^{\frac{\beta}{2}}+C'_\b\big)\|\d Z\|_{\dbH^\b(\QQ^\lambda)} \\
         & \hspace{42.5mm}  + \EE^{\QQ^\lambda}\left[\left(\int_0^T\big|f^0_s\big|\Ind_{\{|f^0_s|\geq n\}}ds\right)^\beta\right]      \nonumber \\
         & \leq C^K_{\beta,L,T}\Bigg(\|\d Y\|_{\dbS^\b(\QQ^\lambda)} + \|\d Z\|_{\dbH^\b(\QQ^\lambda)} + \EE^{\QQ^\lambda}\Bigg[\left(\int_0^T\big|f^0_s\big|\Ind_{\{|f^0_s|\geq n\}}ds\right)^\beta\Bigg] \Bigg),
 \end{align*}
 and hence
 \begin{align}  \label{deltaLleqdeltaY+deltaZ+deltaf}
\|\d K\|_{\cS^\b}
      & \leq C^K_{\beta,L,T}\Bigg(\|\d Y\|_{\cS^\b} + \|\delta Z\|_{\cH^\b} + \cE^{\PP}\left[\left(\int_0^T\big|f^0_s\big|\Ind_{\{|f^0_s|\geq n\}}ds\right)^\beta\right] \Bigg).
 \end{align}
Since the right-hand side converges to $0$, we obtain 
 $ \lim_{m,n\to\infty}\cE^{\PP}\left[\sup_{0\leq s\leq T}|K^m_s-K^n_s|^\beta\right] = 0, $
 and that by completeness of $\cS^\beta$ there exists a limit $K\in\cS^\beta$.
Clearly, the process $K$ is nondecreasing and predictable. 

\ms 

\noindent \textbf{Step 4:} We now show that the limiting process $(Y,Z,K)$ solves the RBSDE \eqref{eq:RBSDE}.
Indeed, for $t\in[0,T]$ and $\varepsilon>0$, by Markov's inequality and Burkholder-Davis-Gundy inequality, we have
 \begin{align*}
    \PP\left[\sup_{0\leq u\leq t}\left|\int_0^uZ^n_s\cdot dB_s-\int_0^u Z_s\cdot dB_s\right|>\varepsilon\right] 
        &\leq \frac{1}{\varepsilon^\beta}\EE\left[\sup_{0\leq u\leq t}\left|\int_0^u\widehat\sigma_s^{\intercal}(Z^n_s-Z_s) \cdot dW_s\right|^\beta\right] \\  
        &\leq \frac{C_\beta}{\varepsilon^\beta}\|Z^n-Z\|_{\dbH^\b}, 
 \end{align*}
 which implies that $\int_0^T Z^n_s\cdot dB_s$ converges to $\int_0^tZ_s\cdot dB_s$ in ucp (uniform convergence on compacts in probability).  
We may extract a subsequence such that $\int_0^TZ_s^n\cdot dB_s$ converges to $\int_0^TZ_s\cdot dB_s$ almost surely.
By Assumption \ref{assum:BSDE.Lipschitz}, we have 
 \begin{align*}
  \int_0^T & \big|f^n_s(Y_s^n,Z_s^n)-f_s(Y_s,Z_s)\big|ds \\
     & \leq \int_0^T\big|f^0_s\big|\Ind_{\{|f^0_s|\geq n\}}ds + L_y\int_0^T|Y^n_s-Y_s|ds + L_z\int_0^T\big|\widehat\sigma_s^{\intercal}(Z^n_s-Z_s)\big|ds \\
     & \leq \int_0^T\big|f^0_s\big|\Ind_{\{|f^0_s|\geq n\}}ds + L_yT\sup_{0\leq s\leq T}|Y^n_s-Y_s| + L_zT^{\frac{1}{2}}\left(\int_0^T\big|\widehat\sigma_s^{\intercal}(Z^n_s-Z_s)\big|^2ds\right)^{\frac{1}{2}} ~\rightarrow ~ 0, 
 \end{align*}
 as $n\to\infty$, and hence $(Y,Z,K)$ solves the correct RBSDE.

\ms

\noindent \textbf{Step 5:} 
We now apply Snell envelop approach to optimal stopping in order to show the Skorokhod condition.
Define 
 $$ \tau_m:= \left\{t>0\,:\, \sup_{n\in\NN}\int_0^t\big|f^n_s(Y^n_s,Z^n_s)\big|ds + \sup_{n\in\NN}\left|\int_0^tZ^n_s\cdot dB_s\right| \geq m \right\}. $$
Up to time $\tau_m$, It\^o integrals are true martingales which disappear after taking conditional expectation, and Lebesgue integals are uniformly bounded which allows us to apply the dominated convergence theorem easily. 
By Step 4 we have  
  $$ \sup_{n\in\NN}\sup_{0\leq u\leq T}\left|\int_0^u Z^n_s\cdot dB_s\right| <\infty, \quad \mbox{and} \quad \sup_{n\in\NN}\sup_{0\leq u\leq T} \int_0^u\big|f^n_s(Y_s^n,Z_s^n)\big|ds < \infty, \quad a.s., $$
 which implies that the sequence of stopping times $\{\tau_m\}_{m\in\NN}$ converges almost surely to $\infty$.
By following the proof of \cite[Proposition 3.1]{LX05}, we may show the following representation for each $n\in\NN$ 
  \begin{align*}
   Y^n_{t\wedge\tau_m} 
        &= \esssup_{\tau\in\cT_{t\wedge\tau_m,T\wedge\tau_m}} \EE\left[\int_{t\wedge\tau_m}^{\tau}f_s^n(Y_s^n,Z_s^n)ds + S_\tau\Ind_{\{\tau<T\wedge\tau_m\}}+Y^n_{T\wedge\tau_m}\Ind_{\{\tau=T\wedge\tau_m\}} \bigg|\cF_{t\wedge\tau_m}^{+,\PP}\right].
  \end{align*}
By triangle inequality, we have 
  \begin{align*}
   Y^n_{t\wedge\tau_m} 
        &\leq \esssup_{\tau\in\cT_{t\wedge\tau_m,T\wedge\tau_m}} \EE\left[\int_{t\wedge\tau_m}^{\tau}f_s(Y_s,Z_s)ds + S_\tau\Ind_{\{\tau<T\wedge\tau_m\}}+Y_{T\wedge\tau_m}\Ind_{\{\tau=T\wedge\tau_m\}} \bigg|\cF_{t\wedge\tau_m}^{+,\PP}\right] \\
        & \hspace{8mm} + \EE\left[\int_0^{T\wedge\tau_m}\big|f^n_s(Y^n_s,Z^n_s)-f_s(Y_s,Z_s)\big|ds + \big|Y^n_{T\wedge\tau_m} - Y_{T\wedge\tau_m}\big| \bigg|\cF_{t\wedge\tau_m}^{+,\PP}\right].
  \end{align*}
It follows from $Y^n_{t\wedge\tau_m} \to Y_{t\wedge\tau_m}$, $Y^n_{T\wedge\tau_m} \to Y_{T\wedge\tau_m}$ and $\int_0^{T\wedge\tau_m}\big|f^n_s(Y^n_s,Z^n_s)-f_s(Y_s,Z_s)\big|ds\to 0$ in $\LL^1$ that 
 \begin{align*} 
   Y_{t\wedge\tau_m}
    \leq \esssup_{\tau\in\cT_{t\wedge\tau_m,T\wedge\tau_m}}\EE\left[\int_{t\wedge\tau_m}^{\tau}f_s(Y_s,Z_s)ds + S_\tau\Ind_{\{\tau<T\wedge\tau_m\}} + Y_{T\wedge\tau_m}\Ind_{\{\tau=T\wedge\tau_m\}} \bigg|\cF_{t\wedge\tau_m}^{+,\PP}\right]. 
 \end{align*}
On the other hand, it is clear that $ Y_{t\wedge\tau_m} \geq S_{t\wedge\tau_m}\Ind_{\{t< T\wedge\tau_m\}} + Y_{T\wedge\tau_m}\Ind_{\{t=T\wedge\tau_m\}}. $
Since $Y_{t\wedge\tau_m} + \int_0^{t\wedge\tau_m}f_s(Y_s,Z_s)ds$ is a supermartingale, we have 
 \begin{align*}
   & Y_{t\wedge\tau_m} + \int_0^{t\wedge\tau_m}f_s(Y_s,Z_s)ds \\
   & \quad \geq \esssup_{\tau\in\cT_{t\wedge\tau_m,T\wedge\tau_m}}\EE\left[\int_{0}^{\tau}f_s(Y_s,Z_s)ds + S_\tau\Ind_{\{\tau<T\wedge\tau_m\}} + Y_{T\wedge\tau_m}\Ind_{\{\tau=T\wedge\tau_m\}} \bigg|\cF_{t\wedge\tau_m}^{+,\PP}\right], 
 \end{align*}
 and therefore, 
 \begin{align} \label{eq:snellY1}
   Y_{t\wedge\tau_m} = \esssup_{\tau\in\cT_{t\wedge\tau_m,T\wedge\tau_m}} \EE\left[\int_{t\wedge\tau_m}^{\tau}f_s(Y_s,Z_s)ds + S_\tau\Ind_{\{\tau<T\wedge\tau_m\}} + Y_{T\wedge\tau_m}\Ind_{\{\tau=T\wedge\tau_m\}} \bigg|\cF_{t\wedge\tau_m}^{+,\PP}\right].
 \end{align}
Define 
 \begin{align*}
  \eta^m_t:= \int_0^tf_s(Y_s,Z_s)ds &+ S_t\Ind_{\{t<T\wedge\tau_m\}} + Y_{T\wedge\tau_m}\Ind_{\{t=T\wedge\tau_m\}} \\
                                    &- \EE\left[\int_{0}^{T\wedge\tau_m}f_s(Y_s,Z_s)ds + Y_{T\wedge\tau_m}\bigg|\cF_{t}^{+,\PP}\right].
 \end{align*} 
Clearly, $\eta^m_{T\wedge\tau_m}=0$. Note that $\eta^m=(\eta^m_t)_{0\leq t\leq T\wedge\tau_m}$ is of class $\DD(\PP)$. 
Let $J^m_t$ be the Snell envelope of $\eta^m$  
  $$ J^m_t := \esssup_{\tau\in\cT_{t\wedge\tau_m,T\wedge\tau_m}}\EE\Big[\eta^m_\tau\Big|\cF_{t\wedge\tau_m}^{+,\PP}\Big],  $$
  which is a c\`adl\`ag process of class $\DD(\PP)$ and is the smallest supermartingale dominating the process $\eta$. 
Hence, by the Doob-Meyer decomposition, there exist a martingale $M^m$ and a predictable nondecreasing process $A^m$ such that $J^m_t = J^m_0 + M^m_t - A^m_t$.  
By the definition of $J^m$ and the representation \eqref{eq:snellY1}, we obtain 
\begin{align}
 J^m_t = Y_{t\wedge\tau_m} - \EE\left[\int_{t\wedge\tau_m}^{T\wedge\tau_m}f_s(Y_s,Z_s)ds + Y_{T\wedge\tau_m}\bigg|\cF_{t\wedge\tau_m}^{+,\PP}\right].
\end{align} 
We have that 
 $$ J^m_t + \EE\left[\int_{0}^{T\wedge\tau_m}f_s(Y_s,Z_s)ds + Y_{T\wedge\tau_m}\bigg|\cF_{t\wedge\tau_m}^{+,\PP}\right] = Y_{t\wedge\tau_m} + \int_{0}^{t\wedge\tau_m}f_s(Y_s,Z_s)ds $$
 is a supermartingale.
Therefore, by the uniqueness of the Doob-Meyer decomposition, $A^m_t = K_{t\wedge\tau_m}$. 
Decompose $A^m$ (and the same for $K$) in continuous part $A^{m,c}$ ($K^c$) and pure-jumps part $A^{m,d}$ ($K^d$). 
By \cite[Proposition B.11]{KQ12}, see also \cite[Proposition 2.34]{ElK81}, we obtain 
 $$ \int_0^{T\wedge\tau_m}\big(J^m_t - \eta^m_t\big)dA_t^{m,c} = 0 \quad \mbox{and} \quad \Delta A^{m,d}_t = \Delta A^{m,d}_t\Ind_{\{J^m_{t-}=\eta^m_{t-}\}}, \,\, a.s. \,\mbox{ for } t\leq\tau_m. $$
By noticing that $ J^m_t - \eta^m_t = Y_{t\wedge\tau_m} - S_{t\wedge\tau_m}\Ind_{\{t<T\wedge\tau_m\}} - Y_{T\wedge\tau_m}\Ind_{\{t=T\wedge\tau_m\}}$,
 we obtain that
 $$ \int_0^{T\wedge\tau_m}\big(Y_{t-} - S_{t-}\big)dK_t = \int_0^{T\wedge\tau_m}\big(J^m_{t-} - \eta^m_{t-}\big)dK_t = 0,\quad a.s. $$
Letting $m\to\infty$, the Skorokhod condition holds true for $K$.

%%%%%%%%%%%%%%%%%%%%% may be useful %%%%%%%%%%%%%%%%%%%%%%%%%%%%%%%%%%%%%%%%%%

% {\color{red}
% \noindent \textbf{Alternative: }
% As $Y^n_t\geq S_t$, $Y_t\geq S_t$ for each $t\in[0,T]$. 
% As $Y^n$ and $S$ are c\`adl\`ag and $\{Y^n\}_{n\in\NN}$ converges to $Y$ in $\cS^\beta$, we may find a localizing sequence of stopping times $\{\tau_m\}_{m\in\NN}$, such that the process 
%     $$\sup_{n\in\NN}(Y^n-S)\Ind_{\llbracket 0,\tau_m\llbracket} $$ 
%    is essentially bounded and $\tau_m\to\infty$. 
%  The convergence of $(Y^n,K^n)$ to $(Y,K)$ in $\cS^\beta$ implies that 
%   $$ \int_0^{\tau_m}(Y_{t-}-S_{t-})dK_t = \lim_{n\to\infty}\int_0^{\tau_m}(Y^n_{t-}-S_{t-})dK^n_t = 0. $$
% %  Since $K$ is nondecreasing and $Y\geq S$, we obtain 
% %   $$ \int_0^{\tau_m}(Y_{t-}-S_{t-})dK_t = 0, \quad \PP-a.s. $$
%  Letting $m\to\infty$, the Skorokhod condition holds true for $K$. }
 
%%%%%%%%%%%%%%%%%%%%%%%%%%%%%%%%%%%%%%%%%%%%%%%%%%%%%%%%%%%%%%%%%%%%%%%%%%%%%%%%%%%%%%
 
% \newpage

%  \ms
%  
%  (Alternatively: The convergence of $(Y^n,K^n)$ to $(Y,K)$ in $\SSS^\beta(\cQ_{L_z})\times\SSS^\beta(\cQ_{L_z})$ implies by the Chebyshev's inequality that the convergence also holds uniformly in $t$ in probability. 
%  Then, the measure $dK^n$ tends to $dK$ weakly in probability, and so 
%     $$ 0=\int_0^T(Y^n_{s-}-S_{s-})dK_s^n \to \int_0^T(Y_{s-}-S_{s-})dK_s =0 $$
%     in probability. \textcolor{red}{Cette m\'ethode ne marche pas !})
%     
%  \ms
 
%  \newpage

\ms
 
\noindent \textbf{Uniqueness:}  
 Let $(Y,Z,K)$ and $(Y',Z',K')$ be two solutions to RBSDE$(f,\xi, S)$. 
 Set $\delta Y = Y-Y'$, $\delta Z=Z-Z'$ and $\delta K=K-K'$. 
 Using the similar computation as above, we have  
  \begin{align*}
   |\delta Y_{\tau\wedge \tau_m}| \leq |\delta Y_{\tau_m}| & - \int_{\tau\wedge\tau_m}^{\tau_m}\sgn(\delta Y_{s})\widehat\sigma_s^{\intercal}\delta Z_s\cdot \left(dW_s - \widehat\lambda_s ds\right), \nonumber        
  \end{align*}
  where $ \widehat\lambda_s:= L\sgn(\delta Y_s)\frac{\widehat\sigma_s^{\intercal}\delta Z_s}{\big|\widehat\sigma_s^{\intercal}\delta Z_s\big|}\Ind_{\{|\widehat\sigma_s^{\intercal}\delta Z_s|\neq 0\}} $
   and $$ \tau_m:=\inf\bigg\{t\geq 0\, :\, \int_0^t |Z_s|^2+|Z'_s|^2 ds\geq m\bigg\}\wedge T. $$
 Taking the conditional expectation with respect to $\cF_\tau$ under the equivalent measure $\widehat\QQ\sim\PP$, defined by $\frac{d\widehat\QQ}{d\PP}=G^{\widehat\lambda}_T$,
  we obtain that
   $$ |\delta Y_{\tau\wedge\tau_m}| \leq \EE^{\widehat{\QQ}}\big[|\delta Y_{\tau_m}|\big|\cF_\tau^{+,\PP}\big]. $$
 Again, since $\delta Y$ belongs to $\DD(\widehat{\QQ})$, it follows that $\delta Y_{\tau_m}\to 0$ in $\LL^1(\widehat\QQ)$, therefore
   $|\delta Y_{\tau}|=0$. 
 It follows by the section theorem that $Y$ and $Y'$ are indistinguishable.  
 By \eqref{deltaZleqdeltaY+deltaf} and \eqref{deltaLleqdeltaY+deltaZ+deltaf}, $(\delta Z,\delta K)=(0,0)$. 
\end{proof}

\subsubsection{General obstacles}

Before proving the wellposedness result, we state the following comparison result for the general c\`adl\`ag solution in the $\LL^2$-setting, which is a generalization of \cite[Proposition 3.2]{RS12}.  
The proof is omitted as it follows the same argument as in the classical one.

\begin{proposition}  \label{comparison1}
 Let $(f,\xi,S)$ and $(f',\xi',S')$ be such that $f$ and $f'$ satisfy Assumption \ref{assum:BSDE.Lipschitz} and 
 $ \EE\big[\int_0^T\big|f^0_s\big|^2ds\big]<\infty$, $\EE\big[\int_0^T\big|f'^0_s\big|^2ds\big]<\infty,$ $\xi,\xi'\in\LL^2$, and $S,S'\in\SSS^2$, and let $(Y,Z,K)$ and $(Y',Z',K')$ be the  corresponding solutions.
 
 Assume that $\xi\le\xi'$, $S_t\leq S'_t$, and $f_t(Y'_t,Z'_t)\leq f'_t(Y'_t,Z'_t)$, $\PP$-a.s., $t\in[0,T]$. 
 Then $Y_\tau\leq Y'_\tau$, for all $\tau\in\cT_{0,T}$.
\end{proposition}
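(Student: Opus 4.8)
The plan is to run the standard linearization-plus-change-of-measure comparison argument, with the reflection term controlled through the Skorokhod condition. First I would introduce the difference processes $\delta Y := Y - Y'$, $\delta Z := Z - Z'$, $\delta K := K - K'$, $\delta\xi := \xi - \xi'$, and subtract the two reflected equations \eqref{eq:RBSDE} to obtain
$$\delta Y_t = \delta\xi + \int_t^T \big(f_s(Y_s,Z_s) - f'_s(Y'_s,Z'_s)\big)ds - \int_t^T \delta Z_s\cdot dB_s + \int_t^T d\delta K_s.$$
I would then linearize the driver by writing $f_s(Y_s,Z_s) - f'_s(Y'_s,Z'_s) = a_s\,\delta Y_s + b_s\cdot\widehat\sigma_s^{\bf T}\delta Z_s + g_s$, where, by Assumption \ref{assum:BSDE.Lipschitz}, $a$ and $b$ are progressively measurable and bounded by $L_y$ and $L_z$ respectively, and $g_s := f_s(Y'_s,Z'_s) - f'_s(Y'_s,Z'_s) \le 0$ by the hypothesis $f_s(Y'_s,Z'_s)\le f'_s(Y'_s,Z'_s)$. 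Invoking Remark \ref{rem:monotonicity}, I may assume $F$ is nonincreasing in $y$, so that $a \le 0$.

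Next I would remove the $\delta Z$-term by a Girsanov change of measure. As in Section \ref{sec:space}(ii), let $\widehat\QQ \in \cQ_{L_z}$ have density $G^b_T$, so that $\widehat B_s := B_s - \int_0^s\widehat\sigma_u b_u\,du$ is a $\widehat\QQ$-local martingale and
$$\delta Y_t = \delta\xi + \int_t^T\big(a_s\,\delta Y_s + g_s\big)ds + \int_t^T d\delta K_s - \int_t^T\delta Z_s\cdot d\widehat B_s.$$
I would then apply the Meyer--It\^o (Tanaka) formula to the convex function $x\mapsto x^+$ evaluated at $\delta Y$. The crucial point is the sign of the reflection contribution: on $\{\delta Y_{s-} > 0\}$ one has $Y_{s-} > Y'_{s-} \ge S'_{s-} \ge S_{s-}$, so the Skorokhod flat-off condition in \eqref{eq:RBSDE} forces $dK_s = 0$ there, whence $\Ind_{\{\delta Y_{s-}>0\}}\,d\delta K_s = -\Ind_{\{\delta Y_{s-}>0\}}\,dK'_s \le 0$. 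Together with $a \le 0$ (so that $\Ind_{\{\delta Y_s>0\}}a_s\delta Y_s = a_s(\delta Y_s)^+ \le 0$), $g \le 0$, and $(\delta\xi)^+ = 0$, every non-martingale term on the right-hand side of the Tanaka expansion of $(\delta Y_\tau)^+$ is nonpositive.

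Finally I would take conditional $\widehat\QQ$-expectation given $\cF_\tau^{+,\PP}$. Using the $\LL^2$-integrability hypotheses, I would localize along $\tau_m := \inf\{t: \int_0^t(|Z_s|^2+|Z'_s|^2)ds \ge m\}\wedge T$ to make $\int\delta Z\cdot d\widehat B$ a genuine martingale, and then let $m\to\infty$; the stochastic integral drops out and one obtains $(\delta Y_\tau)^+ \le 0$, i.e. $Y_\tau \le Y'_\tau$ for every $\tau \in \cT_{0,T}$. I expect the main obstacle to be the book-keeping in the Meyer--It\^o formula: because the barrier is only c\`adl\`ag, $\delta Y$ carries jumps $\Delta\delta Y = -\Delta\delta K$ coming from $K$ and $K'$, so one must verify that the local-time term at $0$ enters with the favorable sign and that the convexity jump corrections $(\delta Y_s)^+ - (\delta Y_{s-})^+ - \Ind_{\{\delta Y_{s-}>0\}}\Delta\delta Y_s$ (nonnegative by the subgradient inequality for $x^+$) do not destroy the inequality. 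This is precisely the step that upgrades the continuous-barrier argument of \cite[Proposition 3.2]{RS12} to the general c\`adl\`ag setting.
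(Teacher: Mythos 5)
Your proof is correct and is precisely the classical linearization--Girsanov--Tanaka argument with the Skorokhod flat-off condition that the paper itself invokes: the paper omits the proof of Proposition \ref{comparison1}, stating only that it follows the classical argument generalizing \cite[Proposition 3.2]{RS12}. Your write-up --- the linearization with $g_s := f_s(Y'_s,Z'_s)-f'_s(Y'_s,Z'_s)\leq 0$, the observation that $dK$ does not charge $\{\delta Y_{s-}>0\}\subseteq\{Y_{s-}>S_{s-}\}$, the nonnegativity of the local-time and convexity jump-correction terms for the c\`adl\`ag obstacle, and the localization/uniform-integrability passage justified by the $\LL^2$ setting --- supplies exactly that argument.
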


\begin{proposition} \label{comparison2}
 Let $(f,\xi,S)$ and $(f',\xi',S')$ satisfy the assumptions of Theorem \ref{ExistenceUniqueness112}. 
 Let $(Y,Z,K)$ and $(Y', Z', K')$ be solutions of corresponding RBSDEs.
 Suppose that $\xi\leq  \xi'$, $\PP$-a.s.; $f(y,z)\leq f'(y,z)$, $dt\otimes\PP$-a.e., for each $y,z\in\RR\times\RR^d$; and $S\leq S'$, $dt\otimes\PP$-a.e.
 Then, $Y_\tau\leq Y'_\tau$ for each $\tau\in\cT_{0,T}$. 
\end{proposition}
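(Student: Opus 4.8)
The plan is to argue directly, mirroring the uniqueness argument in the proof of Theorem \ref{ExistenceUniqueness112} but with the sign function $\sgn$ replaced by the one-sided indicator $\Ind_{\{\cdot>0\}}$, so that only the positive part of the difference is controlled. Invoking Remark \ref{rem:monotonicity}, I first reduce to the case where $F$ is nonincreasing in $y$. Writing $\delta Y:=Y-Y'$, $\delta Z:=Z-Z'$, $\delta K:=K-K'$ and $\delta\xi:=\xi-\xi'\le 0$, the triple $(\delta Y,\delta Z,\delta K)$ solves the equation obtained by subtracting the two RBSDEs. I then linearise the generator increment: by Assumption \ref{assum:BSDE.Lipschitz} there are $\FF$-progressive processes $a$ and $b$ with $a_s\le 0$, $|a_s|\le L_y$ and $|b_s|\le L_z$ such that
$$ f_s(Y_s,Z_s)-f_s(Y'_s,Z'_s)=a_s\,\delta Y_s+b_s\cdot\widehat\sigma_s^{\bf T}\delta Z_s, $$
while $g_s:=f_s(Y'_s,Z'_s)-f'_s(Y'_s,Z'_s)\le 0$ by the standing hypothesis $f\le f'$.

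Applying the one-sided Tanaka--Meyer formula (the positive-part analogue of Proposition 4.2 in \cite{LRTY18}, the nonnegative local-time and jump terms being discarded) and using $dB_s=\widehat\sigma_sdW_s$ gives, for every $\tau\in\cT_{0,T}$,
$$ (\delta Y_\tau)^+\le(\delta\xi)^+ + \int_\tau^T\Ind_{\{\delta Y_{s-}>0\}}\big(a_s\delta Y_s+g_s\big)ds + \int_\tau^T\Ind_{\{\delta Y_{s-}>0\}}\big(b_s\cdot\widehat\sigma_s^{\bf T}\delta Z_s\,ds-\widehat\sigma_s^{\bf T}\delta Z_s\cdot dW_s\big) + \int_\tau^T\Ind_{\{\delta Y_{s-}>0\}}\,d\delta K_s. $$
Here $(\delta\xi)^+=0$, and the first integral is nonpositive since $a_s\le 0$, $g_s\le 0$ and $\delta Y_s>0$ for a.e.\ $s$ on the relevant set. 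The crucial point is the reflection term: on $\{\delta Y_{s-}>0\}$ one has $Y_{s-}>Y'_{s-}\ge S'_{s-}\ge S_{s-}$, so $Y_{s-}>S_{s-}$, and the Skorokhod condition for $K$ forces $dK_s=0$ there; since $dK'_s\ge 0$ this yields $\Ind_{\{\delta Y_{s-}>0\}}d\delta K_s=-\Ind_{\{\delta Y_{s-}>0\}}dK'_s\le 0$. Absorbing the $b$-drift into the Brownian term via the measure $\QQ^b\in\cQ_{L_z}$ with density $G^b_T$, under which $W^b_s:=W_s-\int_0^s b_u\,du$ is a Brownian motion, I am left with
$$ (\delta Y_\tau)^+\le-\int_\tau^T\Ind_{\{\delta Y_{s-}>0\}}\widehat\sigma_s^{\bf T}\delta Z_s\cdot dW^b_s. $$

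It then remains to pass to the conditional expectation under $\QQ^b$. Introducing the localising sequence $\tau_m:=\inf\{t\ge0:\int_0^t|\widehat\sigma_s^{\bf T}\delta Z_s|^2ds\ge m\}\wedge T$, which increases to $T$ and in fact equals $T$ for $m$ large since $\delta Z\in\cH^\beta$, the stochastic integral up to $\tau_m$ is a true $\QQ^b$-martingale, so taking $\EE^{\QQ^b}[\,\cdot\,|\cF^{+,\PP}_{\tau\wedge\tau_m}]$ over $[\tau\wedge\tau_m,\tau_m]$ gives $(\delta Y_{\tau\wedge\tau_m})^+\le\EE^{\QQ^b}\big[(\delta Y_{\tau_m})^+\,\big|\,\cF^{+,\PP}_{\tau\wedge\tau_m}\big]$. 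Letting $m\to\infty$, one has $(\delta Y_{\tau_m})^+\to(\delta\xi)^+=0$, and since $Y,Y'\in\cD(\PP)$ the family $\{\delta Y_\sigma\}_{\sigma\in\cT_{0,T}}$ is uniformly integrable under $\QQ^b$, so the convergence holds in $\LL^1(\QQ^b)$ and the conditional expectations vanish in the limit. This forces $(\delta Y_\tau)^+=0$, i.e.\ $Y_\tau\le Y'_\tau$ $\PP$-a.s., for every $\tau\in\cT_{0,T}$.

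The step I expect to require the most care is the combination of the reflection term with the limiting argument in the $\LL^1$/class-$\DD$ framework: one must make the one-sided Tanaka--Meyer formula rigorous in the presence of the jumps of $K$ and $K'$, verify that the Skorokhod conditions survive the passage to left limits under the a.e.\ inequality $S\le S'$, and replace the square-integrability used in the classical $\LL^2$ comparison (Proposition \ref{comparison1}) by the uniform integrability of $\delta Y$ under the whole family $\cQ_{L_z}$ guaranteed by membership in $\cD(\PP)$. An approximation route through the $\LL^2$ solutions of Theorem \ref{ExistenceUniqueness112} is tempting but delicate, because the truncation $q_n$ applied to $f^0$ and $f'^0$ does not preserve the ordering of the approximate generators $f^n$ and $f'^n$; the direct argument above sidesteps this issue entirely.
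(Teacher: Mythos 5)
Your proof is correct, but it is genuinely different from the paper's, which disposes of this proposition in two lines: it takes the approximating $\LL^2$ solutions $(Y^n,Z^n,K^n)$ and $(Y'^n,Z'^n,K'^n)$ built in the proof of Theorem \ref{ExistenceUniqueness112} from the truncated data $(f^n,\xi^n,S)$ and $(f'^n,\xi'^n,S')$, applies the $\LL^2$ comparison of Proposition \ref{comparison1} to get $Y^n_\tau\leq Y'^n_\tau$, and passes to the limit. Your route instead runs the comparison directly in the limiting class: one-sided Tanaka--Meyer with $\Ind_{\{\delta Y_{s-}>0\}}$, linearisation with $a\le 0$ (via Remark \ref{rem:monotonicity}) and $|b|\le L_z$, removal of the reflection term through the Skorokhod condition on $\{\delta Y_{s-}>0\}$ (and your observation that the a.e.\ inequality $S\le S'$ upgrades to all $t$ and to left limits by right-continuity is exactly the justification needed), Girsanov under $\QQ^b\in\cQ_{L_z}(\PP)$, and the same localisation/class-$\cD$ limiting argument as in the paper's uniqueness step; each step checks out, so this is a valid, self-contained proof that mirrors the paper's uniqueness argument rather than its comparison argument. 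Moreover, your closing objection to the approximation route is not a mere scruple: it points at a real gap in the paper's own two-line proof. Indeed, $f^n-f'^n=(f-f')+h_n$ where $h_n:=(f'^0-f^0)-\big(q_n(f'^0)-q_n(f^0)\big)\ge 0$ because $q_n$ is nondecreasing and $1$-Lipschitz, and $h_n$ is strictly positive wherever the truncation is active; for instance, with $f_t(y,z)=f^0_t+\big(|y|\wedge 1\big)$ and $f'_t(y,z)=f^0_t+2$ one has $f\le f'$, yet on $\{f^0_t\ge n\}$ one computes $f^n_t(y,z)-f'^n_t(y,z)=|y|\wedge 1>0$ for $y\ne 0$, so the hypothesis $f^n_t(Y'^n_t,Z'^n_t)\le f'^n_t(Y'^n_t,Z'^n_t)$ of Proposition \ref{comparison1} can fail at every truncation level whenever $f^0$ is unbounded -- which is precisely the case of interest in this $\LL^1$ theory. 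The paper's argument is repairable: since $f^n\le f'^n+h_n$ pointwise, one can compare $Y^n$ with the solution of the RBSDE with data $(f'^n+h_n,\xi'^n,S')$, and then use the stability estimate of Theorem \ref{thm:RBSDE.stability.comparison}$(i)$ together with $h_n\le |f^0|\Ind_{\{|f^0|\ge n\}}+|f'^0|\Ind_{\{|f'^0|\ge n\}}$ and Assumption \ref{assum:BSDExi} to identify its limit with $Y'$. But your direct proof sidesteps this issue entirely, at the cost of redoing the Tanaka/Girsanov/localisation work that the paper prefers to recycle.
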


\begin{proof}
 Let $(Y^n,Z^n,K^n)$ and $(Y'^n,Z'^n,K'^n)$ be the approximation sequences of the solutions of RBSDE with $(f,\xi,S)$ and $(f',\xi',S')$, respectively. 
 By the comparison result, Proposition \ref{comparison1}, we have $Y_\tau^n\leq Y'^n_\tau$, therefore $Y_\tau\leq Y'_\tau$ for each $\tau\in\cT_{0,T}$.
\end{proof}

Now we are ready to prove the main theorem.

\begin{proof}[Proof of Theorem \ref{RBSDEmainthm}]
Define $S_t^n:= S_t\wedge n$. Clearly, $S^n\geq S^m$ for $n\geq m$. By Theorem \ref{ExistenceUniqueness112}, RBSDE with $(f,\xi,S^n)$ has a unique solution $(Y^n,Z^n,K^n)$.
Define $(\delta Y,\delta Z,\delta K):=(Y^n-Y^m, Z^n-Z^m, K^n-K^m)$. 
By Proposition \ref{comparison2} we have $\delta Y\geq 0$. 
\ms

\noindent\textbf{Step 1:} We are going to show that $\{Y^n\}_{n\in\NN}$ is a Cauchy sequence in $\cD$ and $\cS^{\beta}$. 
Let $\sigma\in\cT_{0,T}$ be arbitrary. 
Define 
 \begin{align*}
   & \tau_\sigma^\varepsilon:=\inf\{t\geq \sigma \,:\, Y_t^n \leq S_t^n + \varepsilon\}\wedge T, \\
   & \tau_k:=\inf\left\{t\geq 0 \,:\, \int_0^t\big(\big|\widehat\sigma_s^{\intercal}Z_s^m\big|^2+\big|\widehat\sigma_s^{\intercal}Z_s^n\big|^2\big)ds\geq k \right\}\wedge\tau^N_k\wedge T. 
 \end{align*}
It follows from the definition of $\tau_\sigma^\varepsilon$ that $K^n$ is flat on $\llbracket \sigma,\tau_\sigma^\varepsilon \rrbracket$, hence 
 \begin{equation*}
   \sgn(\delta Y_{s-})d\delta K_s = \sgn(Y^n_{s-}-Y^m_{s-})dK^n_s - \sgn(Y^n_{s-}-Y^m_{s-})dK^m_s \leq 0 
 \end{equation*} 
 on $\llbracket \sigma,\tau_\sigma^\varepsilon \rrbracket$. 
Again by Proposition 4.2 in \cite{LRTY20} and Assumption \ref{assum:BSDE.Lipschitz}, we obtain
  \allowdisplaybreaks
  \begin{align*}
   |\delta Y_{\sigma\wedge\tau_k}| & \leq |\delta Y_{\tau_\sigma^\varepsilon\wedge\tau_k}| - \int_{\sigma\wedge\tau_k}^{\tau_\sigma^\varepsilon\wedge\tau_k}\sgn(\delta Y_{s-})d\delta Y_s \\
     & = |\delta Y_{\tau^\varepsilon_\sigma\wedge\tau_k}| + \int_{\sigma\wedge\tau_k}^{\tau_\sigma^\varepsilon\wedge\tau_k}\sgn(\delta Y_{s})\big(f_s(Y^n_s,Z^n_s)-f_s(Y^m_s, Z^m_s)\big)ds \\
     & \quad - \int_{\sigma\wedge\tau_k}^{\tau_\sigma^\varepsilon\wedge\tau_k}\sgn(\delta Y_{s})\widehat\sigma_s^{\intercal}\delta Z_s\cdot dW_s + \int_{\sigma\wedge\tau_k}^{\tau_\sigma^\varepsilon\wedge\tau_k}\sgn(\delta Y_{s-})d\delta K_s \\
     & \leq |\delta Y_{\tau_\sigma^\varepsilon\wedge\tau_k}| - \int_{\sigma\wedge\tau_k}^{\tau_\sigma^\varepsilon\wedge\tau_k} \sgn(\delta Y_{s})\widehat\sigma_s^{\intercal}\delta Z_s\cdot
        \left(dW_s - \widehat\lambda_s ds\right), 
  \end{align*}
 where $ \widehat\lambda_s:= L_z\sgn(\delta Y_s)\frac{\widehat\sigma_s^{\intercal}\delta Z_s}{|\widehat\sigma_s^{\intercal}\delta Z_s|}\Ind_{\{|\widehat\sigma_s^{\intercal}\delta Z_s|\neq 0\}}$.
Conditioning with respect to $\cF_{\sigma}^{\PP}$ under the equivalent measure $\widehat\QQ\in\cQ_{L_z}$ defined by $ \frac{d\widehat{\QQ}}{d\PP} = G^{\widehat\lambda}_T$,
 and then, as $\delta Y$ is of class $\cD$, letting $k\to\infty$, we deduce from the above inequality that
 \begin{equation*}
   |\delta Y_{\sigma}| \leq \EE^{\widehat\QQ}\left[\big|\delta Y_{\tau_\sigma^\varepsilon}\big|\Big|\cF^{+,\PP}_\sigma\right]
   \le \EE^{\widehat\QQ}\left[S^+_{\tau_\sigma^\varepsilon}\Ind_{\big\{S^+_{\tau_\sigma^\varepsilon}\geq m\big\}}\bigg|\cF^{+,\PP}_\sigma\right] + \varepsilon,
 \end{equation*}
 where the last inequality follows from  
   $$ 0\leq \delta Y_{\tau_\sigma^\varepsilon} = S_{\tau_\sigma^\varepsilon}^n + \varepsilon - Y^m_{\tau_\sigma^\varepsilon} \leq S_{\tau_\sigma^\varepsilon}^n + \varepsilon - S_{\tau_\sigma^\varepsilon}^m 
        \leq S^+_{\tau_\sigma^\varepsilon}\Ind_{\big\{S^+_{\tau_\sigma^\varepsilon}\geq m\big\}} + \varepsilon. $$
Let $\QQ\in\cQ_{L_z}$ be arbitrary. We obtain that 
 \begin{align*}
  \EE^{\QQ}[|\delta Y_{\sigma}|] &\leq \EE^{\QQ}\left[\EE^{\widehat\QQ}\left[S^+_{\tau_\sigma^\varepsilon}\Ind_{\big\{S^+_{\tau_\sigma^\varepsilon}\geq m\big\}}\bigg|\cF^{+,\PP}_\sigma\right]\right] + \varepsilon  
           \leq \sup_{\tau\in\cT_{0,T}}\cE^{\PP}\left[S^+_\tau\Ind_{\big\{S^+_{\tau}\geq m\big\}}\right] + \varepsilon.
 \end{align*}
Since the above inequality holds for all $\varepsilon>0$, $\sigma\in\cT_{0,T}$ and $\QQ\in\cQ_{L_z}$, we obtain that 
 $$ \sup_{\sigma\in\cT_{0,T}}\cE^{\PP}[|\delta Y_{\sigma}|] \leq \sup_{\tau\in\cT_{0,T}}\cE^{\PP}\left[S^+_{\tau}\Ind_{\{S^+_{\tau}\geq m\}}\right]. $$

\no Together with Lemma \ref{ELbetaleqsupEL}, we obtain that for any $\beta\in(0,1)$
 \begin{equation*}
  \begin{aligned}
  \|\d Y\|_{\cS^\b} &\leq \frac{1}{1-\beta}\|\d Y\|_{\cD}^\beta 
      \leq \frac{1}{1-\beta}\sup_{\tau\in\cT_{0,T}}\cE^{\PP}\left[S^+_{\tau}\Ind_{\{S^+_{\tau}\geq m\}}\right]^\beta.    
  \end{aligned}
 \end{equation*}
 As the spaces $\cD$ and $\cS^\beta$ are complete, we may find a limit $Y\in\cD\cap\cS^\beta$.
 
\ms

\noindent \textbf{Step 2:} We will show that $\{Z^n\}_{n\in\NN}$ is a Cauchy sequence in $\cH^\beta$.
Similar to \eqref{itosameS}, we have 
  \allowdisplaybreaks
 \begin{align}  \label{deltaY^2=Ito}
  \int_0^T  \big|\widehat\sigma_s^{\intercal}\delta Z_s\big|^2ds \leq 2(2+8L_z^2T)\sup_{0\leq s\leq T}(\delta Y_s)^2 &+ 4\int_0^T\delta Y_{s-}d\delta K_s \nonumber \\
    & + 4\sup_{0\leq u\leq T}\left|\int_0^u\delta Y_{s}\widehat\sigma_s^{\intercal}\delta Z_s\cdot dW^\lambda_s\right|.
 \end{align} 
Comparing to \eqref{itosameS}, the extra term $\d Y_{s-} d\d K_s$ is due to the different obstacles $S^n\neq S^m$. Note that by Skorokhod condition and $Y^m\geq S^m$, $Y^n\geq S^n$
  \allowdisplaybreaks
   \begin{align*}
     \int_0^t\delta Y_{s-}d\delta K_s & = \int_0^t(Y^n_{s-}-S^n_{s-})dK^n_s + \int_0^t(S^n_{s-}-Y^m_{s-})dK^n_s  \\
       & \quad -\int_0^t(Y^n_{s-}-S^m_{s-})dK^m_s + \int_0^t(Y^m_{s-}-S^m_{s-})dK^m_s  \\
       & = \int_0^t S^n_{s-}dK_s^n - \int_0^tY_{s-}^mdK_s^n -\int_0^tY_{s-}^ndK_s^m + \int_0^tS_{s-}^mdK_s^m \\
   %  & \leq \int_0^t S^n_{s-}dK_s^n - \int_0^tS_{s-}^mdK_s^n -\int_0^tS_{s-}^ndK_s^m + \int_0^tS_{s-}^mdK_s^m \\
       & \leq \int_0^t \delta S_{s-}dK_s^n -\int_0^t\delta S_{s-}dK_s^m \\
       & \leq \sup_{0\leq s\leq T}|\delta S_s|(K^m_T+K^n_T). 
   \end{align*}  
 Plugging this inequality in \eqref{deltaY^2=Ito}, we obtain 
 \begin{align}   
  \int_0^T \big|\widehat\sigma_s^{\intercal}\delta Z_s\big|^2ds \leq 2(2+8L_z^2T)\sup_{0\leq s\leq T}(\delta Y_s)^2 
         &+ 4\sup_{0\leq s\leq T}|\delta S_s|(K^m_T+K^n_T)  \nonumber \\
         &+ 4\sup_{0\leq u\leq T}\left|\int_0^u\delta Y_{s}\widehat\sigma_s^{\intercal}\delta Z_s\cdot dW^\lambda_s\right|.
 \end{align}
Taking expectation and using Young and Cauchy-Schwarz inequality, we obtain that 
\begin{align*}
  \|\d Z\|_{\dbH^\b(\QQ^\lambda)} 
        \leq C_{\beta,L,T}\Bigg( \|\d Y\|_{\dbS^\b(\QQ^\lambda)} 
           +\|\d S\|_{\dbS^\b(\QQ^\lambda)}^{\frac{1}{2}}\bigg(\|K^m\|_{\II^\beta(\QQ^\lambda)}^{\frac{1}{2}} + \|K^n\|_{\II^\beta(\QQ^\lambda)}^{\frac{1}{2}}\bigg)\Bigg).
\end{align*}
 Since $0\leq \delta S_t\leq S^{+}_t\Ind_{\{S^{+}_t\geq m\}}$ for $t\in[0,T]$, we have $ 0\leq \sup_{0\leq s\leq T}|\delta S_s|\leq \sup_{0\leq s\leq T}S_s^{+}\Ind_{\{S_s^{+}\geq m\}}$ 
   and by Lemma \ref{ELbetaleqsupEL} 
   \begin{equation*}
   \|\d S\|_{\dbS^\b(\QQ^\lambda)}
   \leq \EE^{\QQ^\lambda}\left[\left(\sup_{0\leq s\leq T}S_s^{+}\Ind_{\{S_s^{+}\geq m\}}\right)^\beta\right] 
         \leq \frac{1}{1-\beta}\sup_{\tau\in\cT_{0,T}}\left(\EE^{\QQ^\lambda}\left[S^+_\tau\Ind_{\{S^+_\tau\geq m\}}\right]\right)^\beta.
   \end{equation*}
 As in the proof of Theorem \ref{ExistenceUniqueness112}, by the convergence of $\{Y^n\}_{n\in\NN}$ and Proposition \ref{EstimationZNK} we have $\sup_{n\in\NN}\|K^n\|_{\cS^\beta}<\infty$.
 Therefore, we obtain
  \begin{align*}
    \hspace{-2mm}\|\d Z\|_{\cH^\b}\leq C_{\beta,L,T,Y}\Bigg(\|\d Y\|_{\cS^\b} + \sup_{\tau\in\cT_{0,T}}\cE^{\PP}\left[S_\tau^+\Ind_{\{S^+_\tau\geq m\}}\right]^{\frac{\beta}{2}}\Bigg).
  \end{align*}
 As the right-hand side converges to $0$, $\{Z^n\}_{n\in\NN}\subseteq\cH^\beta$ is a Cauchy sequence. 
 Again by completeness, there exists a limit $Z\in\cH^\beta$. 
 
\ms

\noindent \textbf{Step 3:} Using the same argument as in {\bf Step 3} of the proof of Theorem \ref{ExistenceUniqueness112}, we show 
 that $\{K^n\}_{n\in\NN}$ is a Cauchy sequence in $\cS^\beta$. 
 Hence, there exists a limit $K\in\cS^\beta$.
 It is clear that $K$ is a nondecreasing predictable process starting from zero.
 
\ms 
 Clearly $Y\geq S$. 
 In the same way, we show the Skorokhod condition and that $(Y,Z,K)$ solves the correct RBSDE with $(f,\xi,S)$.  
 
\ms 
 The uniqueness follows by the same argument as in the proof of Theorem \ref{ExistenceUniqueness112}.
\end{proof}

\begin{proof}[Proof of Theorem \ref{thm:RBSDE.stability.comparison} $(ii)$]
 We denote by $(Y^n,Z^n,K^n)$ and $(Y'^n,Z'^n,K'^n)$ the approximation sequences of the solutions of RBSDEs with $(f,\xi,S)$ and $(f',\xi',S')$, respectively. 
 By the comparison result, Proposition \ref{comparison2}, we have $Y_\tau^n\leq Y'^n_\tau$, therefore $Y_\tau\leq Y'_\tau$ for each $\tau$.
\end{proof}

\subsection{Stability of reflected BSDE}

\begin{proof}[Proof of Theorem \ref{thm:RBSDE.stability.comparison} $(i)$]
 Obviously, the process $(\delta Y,\delta Z,\delta K)$ satisfies 
  $$ 
  \delta Y_t 
  = 
  \delta\xi + \int_t^Tg_s(\delta Y_s,\delta Z_s)ds - \int_t^T\delta Z_s\cdot dX_s + \int_t^Td\delta K_s, 
  \quad \PP\mbox{-a.s.}
  $$
where $g_s(\delta Y_s,\delta Z_s):=f'_s(Y_s+\delta Y_s,Z_s+\delta Z_s)-f_s(Y_s,Z_s)$. 
 Define $ \tau_m:=\inf\{t\geq 0 : \int_0^t |Z_s|^2+|Z'_s|^2 ds\geq m\}\wedge T$. 
 Following the same argument as in {\bf Step 1} of the proof of Theorem \ref{ExistenceUniqueness112}, we obtain that 
  \begin{align*}
    |\delta Y_{\tau\wedge\tau_m}| 
%        & \leq |\delta Y_{T\wedge\tau_m}| + \int_{\tau\wedge\tau_m}^{T\wedge\tau_m}\left(L_z\big|\widehat{\sigma}_s^{\intercal}\delta Z_s\big|+|\delta f_s(Y_s,Z_s)|\right)ds \\
%        & \hspace{1.95cm} - \int_{\tau\wedge\tau_m}^{T\wedge\tau_m}\sgn(\delta Y_s)\delta Z_s\cdot dX_s - \int_{\tau\wedge\tau_m}^{T\wedge\tau_m}\sgn(\delta Y_{s-})d\delta N_s \\
       & \leq 
       |\delta Y_{T\wedge\tau_m}| 
       + \int_0^{T}|\delta f_s(Y_s,Z_s)|ds 
       - \int_{\tau\wedge\tau_m}^{T\wedge\tau_m}
                \sgn(\delta Y_{s-})\widehat\sigma_s^{\intercal}\delta Z_s\cdot \big(dW_s - \widehat\lambda_s ds\big), 
  \end{align*}
   where $ \widehat\lambda_s:= L_z\sgn(\delta Y_s)\frac{\widehat{\sigma}_s^{\intercal}\delta Z_s}{\big|\widehat{\sigma}_s^{\intercal}\delta Z_s\big|}\Ind_{\{|\widehat{\sigma}_s^{\intercal}\delta Z_s|\neq 0\}}$. 
   Define  $d\QQ^{\widehat\lambda} := G^{\widehat\lambda}_T d\PP$.
%   we obtain 
%     $$ |\delta Y_{\tau\wedge\tau_m}| \leq \EE^{\QQ^{\widehat\lambda}}\left[|\delta Y_{T\wedge\tau_m}| + \int_0^T|\delta f_s(Y_s,Z_s)|ds \bigg|\cF^{+,\PP}_\tau\right]. $$
 Since $\delta Y\in\cD$, it follows that $\delta Y_{\tau_m}\to\delta\xi$ in $\cL^1$, and therefore
    $$ |\delta Y_\tau| \leq \EE^{\QQ^{\widehat\lambda}}\left[|\delta\xi| + \int_0^T|\delta f_s(Y_s,Z_s)|ds\bigg|\cF_\tau^{+,\PP}\right]. $$
 We deduce immediately from Lemma \ref{techLemma1} that 
  \begin{equation}
    \|\d Y\|_{\cD} \leq \cE^{\PP}\left[|\delta\xi| + \int_0^T|\delta f_s(Y_s,Z_s)|ds\right], 
  \end{equation}
 and from Lemma \ref{supDoob} that for any $\beta\in(0,1)$, 
  \begin{equation} \label{eq:RBSDEstab.Y.beta}
  \|\d Y\|_{\cS^\b} \leq \frac{1}{1-\beta}\cE^{\PP}\left[|\delta\xi| + \int_0^T|\delta f_s(Y_s,Z_s)|ds\right]^\beta.
  \end{equation}
 Further, following {\bf Step 2} and {\bf Step 3} in Theorem \ref{ExistenceUniqueness112}, we obtain that 
  \begin{align*}
     \|\d Z\|_{\cH^\b} + \|\delta K\|_{\cS^\beta} \leq C^1_{\beta,L,T}\Bigg(\|\d Y\|_{\cS^\b}+ \cE^{\PP}\left[\left(\int_0^T|\delta f_s(Y_s,Z_s)|ds\right)^\beta\right]\Bigg). 
  \end{align*}
 The assertion follows from \eqref{eq:RBSDEstab.Y.beta}.
%   \begin{align*}
%     & \cE^{\PP}\left[\left(\int_0^T\big|\widehat\sigma^{\intercal}_s\delta Z_s\big|^2ds\right)^{\frac{\beta}{2}}\right] + \cE^{\PP}\left[\big[\delta(N-K)\big]^{\frac{\beta}{2}}_T\right] \\
%     & \leq C_{\beta,L,T}\Bigg\{\cE^{\PP}[|\delta\xi|]^\beta + \cE^{\PP}\left[\int_0^T|\delta f_s(Y_s,Z_s)|ds\right]^\beta \\
%     &   \hspace{3.3cm} + \bigg(\cE^{\PP}[|\delta\xi|]^\frac{\beta}{2} + \cE^{\PP}\left[\int_0^T|\delta f_s(Y_s,Z_s)|ds\right]^\frac{\beta}{2}\bigg)(C^Y + C^{Y'})^{\frac{1}{2}} \Bigg\}, 
%   \end{align*}
%   with 
%     $$ C^Y:= \cE^\PP\bigg[\sup_{0\leq s\leq T}|Y_s|^\beta\bigg] + \sup_{\tau\in\cT_{0,T}}\cE^\PP[|Y_\tau|]^\beta + \cE^\PP\left[\int_0^T\big|f^0_s\big|ds\right]^\beta $$
%     and 
%     $$ C^{Y'}:= \cE^\PP\bigg[\sup_{0\leq s\leq T}|Y'_s|^\beta\bigg] + \sup_{\tau\in\cT_{0,T}}\cE^\PP[|Y'_\tau|]^\beta + \cE^\PP\left[\int_0^T\big|f'^0_s\big|ds\right]^\beta. $$
\end{proof}

\subsection{A priori estimates and stability of BSDE}

For $S=-\infty$, we have the existence and uniqueness result of the BSDE.
As we have seen in Proposition \ref{EstimationZNK}, there is no a priori estimate for $Y$ for reflected BSDE. 
However, for the BSDE \eqref{eq:BSDE} without reflection we may find a priori estimate for $Y$.

\begin{proof}[Proof of Theorem \ref{thm:ExistenceUniquenessBSDE}: estimates \eqref{BSDEEstimationEq1}-\eqref{BSDEEstimationEq2}]
 Let
  $ \tau_n:=\inf\{t\geq 0:\int_0^t |\widehat\sigma_s^{\intercal}Z_s|^2ds\geq n\}\wedge T$.
 Applying Tanaka's formula, by Assumption \ref{assum:BSDE.Lipschitz} and Remark \ref{rem:monotonicity}, we obtain that
  \begin{align*}
    |Y_{\tau\wedge\tau_n}| &\leq |Y_{\tau_n}| - \int_{\tau\wedge\tau_n}^{\tau_n}\sgn(Y_{s-})dY_s \\
      & = |Y_{\tau_n}| + \int_{\tau\wedge\tau_n}^{\tau_n}\sgn(Y_{s})f_s(Y_s,Z_s)ds - \int_{\tau\wedge\tau_n}^{\tau_n}\sgn(Y_{s})Z_s\cdot dB_s \\
      & \leq |Y_{\tau_n}| + \int_0^T\big|f^0_s \big|ds - \int_{\tau\wedge\tau_n}^{\tau_n}\sgn(Y_{s})\widehat\sigma^{\intercal}_sZ_s\cdot dW_s^{\widehat\lambda},
  \end{align*}
  with $\widehat\lambda_s:= L_z\sgn(Y_s)\frac{\widehat\sigma^{\intercal}_sZ_s}{|\widehat\sigma^{\intercal}_sZ_s|}\Ind_{\{|\widehat\sigma^{\intercal}_sZ_s|\neq 0\}}$.
 Taking conditional expectation with respect to $\cF_\tau^{+,\PP}$ under the measure $\QQ^{\widehat\lambda}$ defined by
  $ \frac{d\QQ^{\widehat\lambda}}{d\PP}:=G^{\widehat\lambda}_T, $
  we obtain 
   $ |Y_{\tau\wedge\tau_n}| \leq \EE^{\QQ^{\widehat\lambda}}\bigg[|Y_{\tau_n}|+\int_0^T\big|f^0_s \big|ds\bigg|\cF_\tau^{+,\PP}\bigg]. $
 As $Y\in\cD$, letting $n\to\infty$, we obtain that 
  \begin{equation}  \label{BSDEEstimationProofeq1}
    |Y_{\tau}| \leq \EE^{\QQ^{\widehat\lambda}}\left[|\xi|+\int_0^T\big|f^0_s \big|ds\bigg|\cF_\tau^{+,\PP}\right],
  \end{equation}
   and \eqref{BSDEEstimationEq1} follows immediately from Lemma \ref{techLemma1}.
 By Lemma \ref{supDoob}, \eqref{BSDEEstimationProofeq1} implies that
   \begin{equation} \label{BSDEEstimationProofeq2}
    \|Y\|_{\cS^\b} \leq \frac{1}{1-\beta}\cE^{\PP}\left[|\xi|+\int_0^T\big|f^0_s \big|ds\right]^\beta
    \q\mbox{for all}\q\b\in (0,1).
   \end{equation}
 Further, by applying It\^o's formula on $Y^2$, we see that
  \allowdisplaybreaks
  \begin{align*}
    \int_0^T \big|\widehat\sigma_s^{\intercal}Z_s\big|^2ds 
        &= Y^2_T-Y^2_0 + 2\int_0^TY_s\big(f_s(Y_s,Z_s)- \widehat\sigma_s^{\intercal}Z_s\cdot \lambda_s\big)ds - 2 \int_0^TY_sZ_s\cdot dB^\lambda_s  \\
        & \leq \sup_{0\leq s\leq T}Y^2_s + 2\int_0^T|Y_s| \big(L_z\big|\widehat\sigma^{\intercal}_sZ_s\big| + \big|f_s^0\big|\big)ds 
        + 2\sup_{0\leq u\leq T}\left|\int_0^uY_sZ_s\cdot dB^\lambda_s\right| \\
        & \leq (2+2L^2_zT)\sup_{0\leq s\leq T}|Y_s|^2 + \frac{1}{2}\int_0^T\big|\widehat\sigma_s^{\intercal}Z_s\big|^2ds + \left(\int_0^T\big|f^0_s\big|ds\right)^2 \\
        & \hspace{74mm} + 2\sup_{0\leq u\leq T}\left|\int_0^uY_sZ_s\cdot dB^\lambda_s\right|.
  \end{align*}
% Therefore,
%  \begin{align*}
%    & \EE^{\QQ^\lambda}\left[\left(\int_0^T\big|\widehat\sigma_s^{\intercal}Z_s\big|^2ds\right)^{\frac{\beta}{2}}+ [N]^{\frac{\beta}{2}}_T\right] \\
%    & \hspace{5mm} \leq C'_{L,T,\beta}\EE^{\QQ^\lambda}\left[\sup_{0\leq s\leq T}|Y_s|^\beta + \left(\int_0^T\big|f^0_s\big|ds\right)^\beta + \sup_{0\leq u\leq T}\left|\int_0^uY_sZ_s\cdot dX^\lambda_s + \int_0^uY_{s-}dN_s\right|^{\frac{\beta}{2}}\right].
%  \end{align*}
  Finally, by Burkholder-Davis-Gundy inequality and Young's inequality, we obtain  
    \begin{align*}
      \|Z\|_{\HH^\beta(\QQ^\lambda)} \leq C''_{L,T,\beta}\left(\|Y\|_{\SSS^\beta(\QQ^\lambda)} + \EE^{\QQ^\lambda}\left[\left(\int_0^T\big|f^0_s\big|ds\right)^\beta\right]\right).
    \end{align*}
%    $$ \EE^{\QQ^\lambda}\left[\left(\int_0^T\big|\widehat\sigma_s^{\intercal}Z_s\big|^2ds\right)^{\frac{\beta}{2}}+ [N]^{\frac{\beta}{2}}_T\right] 
%          \leq C''_{L,T,\beta}\EE^{\QQ^\lambda}\left[\sup_{0\leq s\leq T}|Y_s|^\beta + \left(\int_0^T\big|f^0_s\big|ds\right)^\beta\right]. $$
  Taking supremum over all $\QQ^\lambda\in\cQ_{L_z}$,  \eqref{BSDEEstimationEq2} follows from the above inequality and \eqref{BSDEEstimationProofeq2}.
\end{proof}

As in the proof of Theorem \ref{ExistenceUniqueness112} and that of the estimates \eqref{BSDEEstimationEq1}-\eqref{BSDEEstimationEq2} of Theorem \ref{thm:ExistenceUniquenessBSDE}, we may estimate the difference of two solutions of two BSDEs. 
Let $(Y^n,Z^n)$ be the solution of the approximating BSDE with $(f^n,\xi^n)$ as in previous section. Define $(\delta Y,\delta Z):=(Y-Y^n,Z-Z^n)$. 

\begin{proposition}\label{prop:deltaYestimate}
Under Assumptions \ref{assum:BSDE.Lipschitz} and \ref{assum:BSDExi}, we have 
  $$
  \begin{array}{c}
  \displaystyle
    \|\d Y\|_{\cD}
    \leq \cE^{\PP}\left[|\xi|\Ind_{\{|\xi|\geq n\}}+\int_0^T\big|f^0_s\big|\Ind_{\{|f^0_s|\geq n\}}ds\right], 
  \\
  \mbox{and}~ \displaystyle
     \|\d Y\|_{\cS^\b} + \|\d Z\|_{\cH^\b} \nonumber  
     \leq C_{\beta,L,T}\left(\cE^{\PP}\left[|\xi|\Ind_{\{|\xi|\geq n\}}\right]^\beta+\cE^{\PP}\left[\int_0^T|f^0_s|\Ind_{\{|f^0_s|\geq n\}}ds \right]^{\beta}\right). 
 \end{array}
 $$
\end{proposition}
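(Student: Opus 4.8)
The plan is to exploit that the difference $(\delta Y,\delta Z)$ again solves a BSDE and then to repeat, almost verbatim, the a priori arguments already established for the estimates \eqref{BSDEEstimationEq1}--\eqref{BSDEEstimationEq2}. Since $(Y,Z)$ solves the BSDE with data $(f,\xi)$ and $(Y^n,Z^n)$ solves the BSDE with data $(f^n,\xi^n)$, subtracting the two equations shows that $(\delta Y,\delta Z)$ satisfies
\[
\delta Y_t = \delta\xi + \int_t^T g_s\,ds - \int_t^T \delta Z_s\cdot dB_s,
\qquad g_s := f_s(Y_s,Z_s)-f^n_s(Y^n_s,Z^n_s),
\]
with $\delta\xi = \xi - q_n(\xi)$. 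The first, purely bookkeeping, step is to record the two elementary truncation bounds $|\delta\xi| = (|\xi|-n)^+ \le |\xi|\Ind_{\{|\xi|\ge n\}}$ and, using the definition $f^n_s(y,z)=f_s(y,z)-f^0_s+q_n(f^0_s)$,
\[
g_s = \big(f_s(Y_s,Z_s)-f_s(Y^n_s,Z^n_s)\big) + \big(f^0_s-q_n(f^0_s)\big),
\qquad |f^0_s-q_n(f^0_s)|\le |f^0_s|\Ind_{\{|f^0_s|\ge n\}}.
\]

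First I would apply the Tanaka-type formula of \cite[Proposition 4.2]{LRTY18} to $|\delta Y|$, exactly as in the proof of the estimates of Theorem \ref{thm:ExistenceUniquenessBSDE}. Using the monotonicity of $f$ in $y$ (Remark \ref{rem:monotonicity}) to discard the $y$-contribution, the Lipschitz Assumption \ref{assum:BSDE.Lipschitz} to bound the $z$-contribution by $L_z|\widehat\sigma_s^{\bf T}\delta Z_s|$, and absorbing the resulting linear term through the Girsanov drift
\[
\widehat\lambda_s := L_z\,\sgn(\delta Y_s)\frac{\widehat\sigma_s^{\bf T}\delta Z_s}{|\widehat\sigma_s^{\bf T}\delta Z_s|}\Ind_{\{|\widehat\sigma_s^{\bf T}\delta Z_s|\ne 0\}},
\]
one localizes along $\tau_k:=\inf\{t\ge0:\int_0^t|\widehat\sigma_s^{\bf T}\delta Z_s|^2ds\ge k\}\wedge T$, takes conditional expectation under $\widehat\QQ\in\cQ_{L_z}$ with density $G^{\widehat\lambda}_T$, and lets $k\to\infty$ using $\delta Y\in\cD$ (which holds since both $Y$ and $Y^n$ are of class $\cD$). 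This yields the pointwise bound
\[
|\delta Y_\tau| \le \EE^{\widehat\QQ}\!\left[\,|\xi|\Ind_{\{|\xi|\ge n\}} + \int_0^T |f^0_s|\Ind_{\{|f^0_s|\ge n\}}\,ds \,\Big|\,\cF_\tau^{+,\PP}\right],
\qquad \tau\in\cT_{0,T}.
\]

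From this bound the first two estimates follow directly: Lemma \ref{techLemma1} gives the $\cD$-estimate, while Lemma \ref{supDoob}, applied to the nonnegative submartingale $t\mapsto\EE^{\widehat\QQ}[\,\cdot\,|\cF_t^{+,\PP}]$, gives $\|\delta Y\|_{\cS^\b}\le \tfrac{1}{1-\beta}\,\cE^{\PP}[\cdots]^\beta$; splitting the two terms by subadditivity of $\cE^\PP$ together with $(a+b)^\beta\le a^\beta+b^\beta$ for $\beta\in(0,1)$ puts it in the stated form. For the gradient estimate I would apply It\^o's formula to $(\delta Y)^2$ and then the Burkholder--Davis--Gundy and Young inequalities exactly as in Step 2 of Theorem \ref{ExistenceUniqueness112}; since there is no reflection here the computation is precisely the one leading to \eqref{deltaZleqdeltaY+deltaf} with the $\delta K$ terms absent, giving
\[
\|\delta Z\|_{\cH^\b} \le C^Z_{\beta,L,T}\Big(\|\delta Y\|_{\cS^\b} + \cE^{\PP}\big[\textstyle\int_0^T|f^0_s|\Ind_{\{|f^0_s|\ge n\}}\,ds\big]^\beta\Big),
\]
and combining with the $\cS^\b$-estimate for $\delta Y$ completes the proof. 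I expect no genuinely new difficulty beyond the truncation bookkeeping above; the only point requiring care is the localization/uniform-integrability justification that the stochastic integral is a true $\widehat\QQ$-martingale and that $\delta Y_{\tau_k}\to\delta Y_\tau$, which is exactly the role played by $\delta Y\in\cD$ and $\delta Z\in\cH^\b$.
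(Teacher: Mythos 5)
Your proposal is correct and follows exactly the route the paper intends: the paper gives no separate proof of Proposition \ref{prop:deltaYestimate}, merely noting it follows "as in the proof of Theorem \ref{ExistenceUniqueness112} and that of the estimates \eqref{BSDEEstimationEq1}--\eqref{BSDEEstimationEq2}," and your argument is precisely that — the Tanaka/Girsanov bound on $|\delta Y_\tau|$ combined with Lemmas \ref{techLemma1} and \ref{supDoob}, then It\^o on $(\delta Y)^2$ with BDG and Young for $\delta Z$, with the correct truncation bounds $|\xi-q_n(\xi)|\le|\xi|\Ind_{\{|\xi|\ge n\}}$ and $|f^0_s-q_n(f^0_s)|\le|f^0_s|\Ind_{\{|f^0_s|\ge n\}}$.
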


\begin{corollary} \label{cor:estimateYA}
For any  $\d >0$  and  $A\in \cF^{\dbP}_T$ such that $\cE^\dbP[\Ind_A] <\d$ we have
 $$ \sup_{\tau\in\cT_{0,T}} \cE^\dbP [|Y_\tau| \Ind_A] \le \cE^{\dbP}\left[|\xi|\Ind_{\{|\xi|\geq n\}}+\int_0^T\big|f^0_s\big|\Ind_{\{|f^0_s|\geq n\}}ds\right] + C_n \d^\frac12, $$
where $C_n$ is a constant dependent on $n$.
\end{corollary}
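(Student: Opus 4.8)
The plan is to split the $\LL^1$-solution as $Y = Y^n + \delta Y$, where $(Y^n,Z^n)$ is the $\LL^2$-solution of the approximating BSDE with truncated data $(f^n,\xi^n)$ introduced just before Proposition~\ref{prop:deltaYestimate}, and to control the two pieces by different mechanisms: $\delta Y$ is uniformly small in $\cD$-norm by the stability estimate already established, while $Y^n$ is genuinely square-integrable, so its mass on the small set $A$ is controlled by Cauchy--Schwarz.

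Concretely, I would fix $\QQ\in\cQ_{L_z}(\PP)$ and $\tau\in\cT_{0,T}$, and start from the pointwise bound $|Y_\tau|\Ind_A \le |\delta Y_\tau| + |Y^n_\tau|\Ind_A$. Taking $\EE^{\QQ}$ and using linearity gives $\EE^{\QQ}[|Y_\tau|\Ind_A] \le \EE^{\QQ}[|\delta Y_\tau|] + \EE^{\QQ}[|Y^n_\tau|\Ind_A]$. The first summand is at most $\|\delta Y\|_{\cD}$, uniformly in $\QQ$ and $\tau$, and by Proposition~\ref{prop:deltaYestimate} this is bounded by $\cE^{\PP}\big[|\xi|\Ind_{\{|\xi|\ge n\}} + \int_0^T|f^0_s|\Ind_{\{|f^0_s|\ge n\}}ds\big]$, which is precisely the first term on the right-hand side of the claim.

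For the second summand I would apply Cauchy--Schwarz under $\QQ$: $\EE^{\QQ}[|Y^n_\tau|\Ind_A] \le \EE^{\QQ}[|Y^n_\tau|^2]^{1/2}\,\QQ(A)^{1/2} \le \|Y^n\|_{\cS^2(\PP)}\,\cE^{\PP}[\Ind_A]^{1/2} \le \|Y^n\|_{\cS^2(\PP)}\,\delta^{1/2}$, where I used $\QQ(A)=\EE^{\QQ}[\Ind_A]\le\cE^{\PP}[\Ind_A]<\delta$ and $\EE^{\QQ}[|Y^n_\tau|^2]^{1/2}\le\|Y^n\|_{\cS^2(\PP)}$. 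Setting $C_n:=\|Y^n\|_{\cS^2(\PP)}$, taking the supremum over $\QQ$ and $\tau$ in the two bounds above, and adding them then yields the corollary.

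The one point needing care is the finiteness and purely-$n$ dependence of $C_n=\|Y^n\|_{\cS^2(\PP)}$, i.e.\ a square-integrability bound for $Y^n$ that is uniform over the whole Girsanov family $\cQ_{L_z}(\PP)$. This holds because the data of the $n$-th equation is bounded, $|\xi^n|\le n$ and $|f^n_s(0,0)|=|q_n(f^0_s)|\le n$, and because under each $\QQ^\lambda\in\cQ_{L_z}(\PP)$ the equation for $Y^n$ carries the generator $f^n_s(y,z)-\widehat\sigma_s^{\bf T}z\cdot\lambda_s$, which is Lipschitz with constants $L_y$ and $2L_z$ independent of $\lambda$ (cf.\ the Remark at the beginning of Section~\ref{sect:RBSDE}); the classical $\LL^2$ a priori estimate then yields $\EE^{\QQ^\lambda}\big[\sup_{t}|Y^n_t|^2\big]\le C_{L,T}\,n^2(1+T^2)$ with $C_{L,T}$ independent of $\lambda$, so that $C_n$ indeed depends only on $n$ (and the fixed $L,T$).
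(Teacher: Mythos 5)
Your proof is correct and follows essentially the same route as the paper: the same decomposition $Y = Y^n + \delta Y$, with $\|\delta Y\|_{\cD}$ controlled by Proposition~\ref{prop:deltaYestimate} and the $Y^n$ term handled by Cauchy--Schwarz against $\cE^{\PP}[\Ind_A]^{1/2}$, the paper likewise invoking the classical $\LL^2$ estimate to define $C_n$. Your closing paragraph merely makes explicit what the paper leaves as a one-line citation, namely that the $\LL^2$ bound on $Y^n$ is uniform over $\cQ_{L_z}(\PP)$ because the truncated data is bounded by $n$ and the Girsanov-shifted generator has Lipschitz constants independent of $\lambda$.
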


\begin{proof}
It is clear that for any $n\in \dbN$
 \begin{align*}
  \sup_{\tau\in\cT_{0,T}} \cE^\dbP [|Y_\tau|\Ind_A] 
   &\leq \sup_{\tau\in\cT_{0,T}} \cE^\dbP [|\d Y_\tau|\Ind_A] +\sup_{\tau\in\cT_{0,T}} \cE^\dbP [|Y^n_\tau|\Ind_A] \\
   &\leq \cE^{\PP}\left[|\xi|\Ind_{\{|\xi|\geq n\}}+\int_0^T\big|f^0_s\big|\Ind_{\{|f^0_s|\geq n\}}ds\right] + 
	\sup_{\tau\in\cT_{0,T}} \cE^\dbP \big[|Y^n_\tau|^2\big]^\frac12  \cE^\dbP [\Ind_A]^\frac12 \\
   &\leq \cE^{\PP}\left[|\xi|\Ind_{\{|\xi|\geq n\}}+\int_0^T\big|f^0_s\big|\Ind_{\{|f^0_s|\geq n\}}ds\right] + C_n \d^\frac12.
 \end{align*}
The second inequality is due to Proposition \ref{prop:deltaYestimate}, and the last inequality is due to the classical estimate on $\LL^2$ solution of BSDE.
\end{proof}

\section{Second-order backward SDE: representation and uniqueness}
\label{sect:2BSDEuniqueness}

 We now prove the following representation theorem for the solution of the 2BSDE \eqref{eq:2BSDE}. 
 Note that this representation implies the uniqueness of the process $Y$, and further that of the process $Z$ as $d\langle Y,B\rangle = Z d\langle B\rangle$.
 
 \begin{theorem} \label{thm:representation}
  Let Assumption \ref{AssumptionOmegaBSDE} hold true and $(Y,Z)$ be a solution to the 2BSDE \eqref{eq:2BSDE} satisfying the minimality condition \eqref{mincond}. 
  For each $\PP\in\cP_0$, let $(\cY^\PP,\cZ^\PP)$ be the solution of the corresponding BSDE \eqref{eq:BSDE}. 
  Then, for any $\PP\in\cP_0$ and $\tau\in\cT_{0,T}$, 
   \begin{equation} \label{eq:2BSDErepresentation}
     Y_\tau = \esssup^\PP_{\PP'\in\cP_+(\tau,\PP)}\cY_\tau^{\PP'}, \quad \PP-a.s.
   \end{equation}
  In particular, the 2BSDE has at most one solution in $\cD\big(\cP_0, \dbF^{+,\cP_0}\big)\times \cH^\beta\big(\cP_0, \dbF^{\cP_0}\big)$ for all $\beta\in(0,1)$ satisfying the minimality condition \eqref{mincond},
    and the comparison result of Proposition \ref{prop:2BSDEcomp} holds true.
 \end{theorem}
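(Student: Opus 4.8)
The plan is to establish the representation \eqref{eq:2BSDErepresentation} by proving the two inequalities separately, the common engine being a linearized change-of-measure computation. Fix $\PP\in\cP_0$ and $\PP'\in\cP_+(\tau,\PP)$, and work under $\PP'$. Writing $\delta Y:=Y-\cY^{\PP'}$ and $\delta Z:=Z-\cZ^{\PP'}$, subtracting the BSDE \eqref{eq:BSDE} from the 2BSDE \eqref{2bsdel} and using the linearization coefficients $a^{\PP'},b^{\PP'}$ introduced before Definition \ref{def1} gives
$$ d(\delta Y_t) = -\big(a_t^{\PP'}\delta Y_t + b_t^{\PP'}\cdot\widehat\sigma_t\delta Z_t\big)dt + \widehat\sigma_t^{\bf T}\delta Z_t\cdot dW_t^{\PP'} - dK_t^{\PP'},\qquad\PP'\mbox{-a.s.} $$
Introducing the discount factor $\Gamma_s^{\PP'}:=\exp\!\big(\int_\tau^s a_u^{\PP'}du\big)$ and recalling that under $\QQ_\tau^{\PP'}$ the process $(W^{\PP'}_s-\int_\tau^s b_u^{\PP'}du)_{s\ge\tau}$ is a Brownian motion, the product $\Gamma^{\PP'}\delta Y$ has a $\QQ_\tau^{\PP'}$-local-martingale part and drift $-\Gamma^{\PP'}\,dK^{\PP'}$. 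Since $\delta Y_T=0$, after localizing along a sequence $\tau_n\uparrow T$ (to turn the stochastic integral into a true martingale) and passing to the limit using that $Y,\cY^{\PP'}\in\cD$ together with the uniform-integrability estimate of Corollary \ref{cor:estimateYA}, I expect to obtain
$$ Y_\tau - \cY_\tau^{\PP'} = \EE^{\QQ_\tau^{\PP'}}\Big[\int_\tau^T\Gamma_s^{\PP'}\,dK_s^{\PP'}\,\Big|\,\cF_\tau^{+,\PP'}\Big],\qquad\PP'\mbox{-a.s.} $$
By Remark \ref{rem:monotonicity} we may assume $a^{\PP'}\le 0$, so that $e^{-L_yT}\le\Gamma^{\PP'}\le 1$.

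Since $K^{\PP'}$ is nondecreasing, the right-hand side is nonnegative, giving $Y_\tau\ge\cY_\tau^{\PP'}$ $\PP'$-a.s., and hence $\PP$-a.s.\ because $\PP'=\PP$ on $\cF_\tau^+$. Taking the essential supremum over $\PP'\in\cP_+(\tau,\PP)$ yields the easy inequality $Y_\tau\ge\esssup^\PP_{\PP'\in\cP_+(\tau,\PP)}\cY_\tau^{\PP'}$.

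For the reverse inequality I would exploit the minimality condition \eqref{mincond}. First note that for $\PP'\in\cP_+(\tau,\PP)$ the processes $K^{\PP'}$ and $K^\PP$ coincide on $\llbracket0,\tau\rrbracket$ (they are built from the same $Z$ and the same stochastic integral, and $\PP'=\PP$ on $\cF_\tau$), so $K_\tau^{\PP'}=K_\tau^\PP$; hence \eqref{mincond} is equivalent to $\essinf^\PP_{\PP'\in\cP_+(\tau,\PP)}\EE^{\QQ_\tau^{\PP'}}[K_T^{\PP'}-K_\tau^{\PP'}\,|\,\cF_\tau^{+,\PP'}]=0$. Using the stability under concatenation (Assumption \ref{assumption:ppt-prob}$(iii)$), the family inside this essential infimum is downward directed, so there is a sequence $\PP'_k\in\cP_+(\tau,\PP)$ along which $\EE^{\QQ_\tau^{\PP'_k}}[K_T^{\PP'_k}-K_\tau^{\PP'_k}\,|\,\cF_\tau^{+,\PP'_k}]\downarrow0$. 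The two-sided bound on $\Gamma^{\PP'}$ then squeezes $0\le\EE^{\QQ_\tau^{\PP'_k}}[\int_\tau^T\Gamma_s^{\PP'_k}dK_s^{\PP'_k}\,|\,\cF_\tau^{+,\PP'_k}]\le\EE^{\QQ_\tau^{\PP'_k}}[K_T^{\PP'_k}-K_\tau^{\PP'_k}\,|\,\cF_\tau^{+,\PP'_k}]\to0$, so that $Y_\tau-\cY_\tau^{\PP'_k}\to0$; since each $\cY_\tau^{\PP'_k}\le\esssup^\PP_{\PP'}\cY_\tau^{\PP'}$, this gives $Y_\tau\le\esssup^\PP_{\PP'}\cY_\tau^{\PP'}$, and combined with the previous paragraph proves \eqref{eq:2BSDErepresentation}.

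Finally, uniqueness and comparison follow at once. The right-hand side of \eqref{eq:2BSDErepresentation} depends only on the data $(F,\xi,\cP)$ through the BSDE solutions $\cY^{\PP'}$, and not on the particular 2BSDE solution, so $Y$ is uniquely determined; then $Z$ is pinned down by $d\langle Y,B\rangle=Z\,d\langle B\rangle$. For Proposition \ref{prop:2BSDEcomp}, the comparison principle for BSDEs (Theorem \ref{thm:BSDE.Stab.Comp}$(ii)$) gives $\cY^{\PP'}\le\cY'^{\PP'}$ for each $\PP'$, and the representation together with the monotonicity of the essential supremum yields $Y\le Y'$. I expect the main obstacle to be the rigorous justification of the conditional-expectation representation in the first paragraph: in the $\LL^1$ framework the stochastic integrals are only local martingales and $\int_\tau^T\Gamma^{\PP'}dK^{\PP'}$ is merely integrable under the nonlinear expectation, so the localization-and-limit argument must be controlled by the class-$\cD$ property and the estimates of Section \ref{sect:RBSDE}, and one must check that the downward-directedness required for the essential infimum is indeed compatible with the family $\cP_+(\tau,\PP)$.
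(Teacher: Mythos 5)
Your proposal is correct, and its core engine --- linearizing $\delta Y=Y-\cY^{\PP'}$ with the coefficients $a^{\PP'},b^{\PP'}$, passing to $\QQ_\tau^{\PP'}$, discounting by $e^{\int_\tau^\cdot a_u^{\PP'}du}$, obtaining $\delta Y_\tau=\EE^{\QQ_\tau^{\PP'}}\big[\int_\tau^T e^{\int_\tau^s a_u^{\PP'}du}\,dK_s^{\PP'}\,\big|\,\cF_\tau^+\big]$ by localization, and closing with the minimality condition \eqref{mincond} --- is exactly the paper's argument for the hard inequality $Y_\tau\le\esssup^\PP_{\PP'\in\cP_+(\tau,\PP)}\cY_\tau^{\PP'}$, as is your treatment of uniqueness of $Z$ via $d\langle Y,B\rangle=Z\,d\langle B\rangle$ and of the comparison statement. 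The one genuine divergence is the easy inequality: the paper regards $Y$ as a supersolution of the BSDE on $\llbracket\tau,T\rrbracket$ under $\PP'$ and invokes the comparison result of Theorem \ref{thm:RBSDE.stability.comparison}$(ii)$, whereas you read $Y_\tau\ge\cY_\tau^{\PP'}$ directly off the nonnegativity of the same conditional-expectation identity. Your route is more self-contained (it avoids interpreting a supersolution-versus-solution comparison inside a theorem stated for solutions), at the price of needing the localization-and-limit justification already for this direction --- but since the paper must carry out that justification anyway for the reverse inequality, nothing is lost. Two minor points: your extraction of a minimizing sequence via downward directedness is unnecessary --- the paper simply takes $\essinf^\PP_{\PP'}$ on both sides of $\delta Y_\tau\le e^{L_yT}\,\EE^{\QQ_\tau^{\PP'}}\big[K_T^{\PP'}-K_\tau^{\PP'}\,\big|\,\cF_\tau^+\big]$ and applies \eqref{mincond} (alternatively, a countable subfamily attaining the essential infimum suffices, with no monotonicity needed); and your explicit observation that $K_\tau^{\PP'}=K_\tau^{\PP}$ for $\PP'\in\cP_+(\tau,\PP)$ is precisely the step the paper uses implicitly when matching this bound against \eqref{mincond}, so making it explicit is a small improvement in rigor rather than a deviation.
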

 
 \begin{proof}
  The uniqueness of $Y$ is an immediate consequence of \eqref{eq:2BSDErepresentation}, and implies the uniqueness of $Z$, $\widehat{a}_tdt\otimes\cP_0$-q.s.~by the fact that 
          $\langle Y,B\rangle_t = \langle \int_0^\cdot Z_s\cdot B_s,B\rangle_t = \int_0^t\widehat a_s Z_sds$, $\dbP$-a.s. 
  This representation also implies the comparison result as an immediate consequence of the corresponding comparison result of the BSDEs $\cY^{\dbP}$.

\vspace{3mm}
 
This proof of the respresentation is similar to the one in \cite{STZ12}. The only difference is due to the different minimality condition \eqref{mincond}.
  Let $\PP\in\cP_0$ and $\PP'\in\cP_+(\tau,\PP)$ be arbitrary. 
  Since \eqref{2bsdel} holds $\PP'$-a.s., we can consider $Y$ as a supersolution of the BSDE on $\llbracket\tau,T\rrbracket$ under $\PP'$. 
  By comparison result, Proposition \ref{thm:RBSDE.stability.comparison}(ii), we obtain that $Y_\tau\geq\cY_\tau^{\PP'}$, $\PP'$-a.s. 
  As $\cY_\tau^{\PP'}$ (or a $\dbP'$-version of it) is $\cF^+_\tau$-measurable and $Y_\tau$ is $\cF_\tau^{+,\cP_0}$-measurable, we deduce that the inequality also holds $\PP$-a.s., 
     since $\dbP'=\dbP$ on $\cF_\tau^+$ for each $\dbP'\in\cP_+(\tau,\PP)$. % and the fact that measures extend uniquely to the completed $\sigma$-algebras.
  Therefore, 
    \begin{equation} \label{eq:2BSDErep1}
      Y_\tau \geq \esssup^\PP_{\PP'\in\cP_+(\tau,\PP)}\cY^{\PP'}_\tau, \quad \PP\mbox{-a.s.}
    \end{equation}
    by arbitrariness of $\PP'$. 
  
  We now show the reverse inequality. 
  Define $\delta Y:=Y-\cY^{\PP'}$ and $\delta Z:=Z-\cZ^{\PP'}$. % and $\delta N:=N^{\PP'}-\cN^{\PP'}$. 
  By Assumption \ref{AssumptionOmegaBSDE}, there exist two bounded processes $a^{\PP'}$ and $b^{\PP'}$ such that 
   \begin{align*}
     \delta Y_\tau &= \int_\tau^T\left(a^{\PP'}_s\delta Y_s + b^{\PP'}_s\cdot\widehat\sigma_s^{\intercal}\delta Z_s\right)ds - \int_\tau^T\widehat\sigma_s^{\intercal}\delta Z_s\cdot dW_s 
                          + \int_\tau^TdK_s^{\PP'} \\
            &= \int_\tau^T a^{\PP'}_s\delta Y_s -  \int_\tau^T\widehat\sigma_s^{\intercal}\delta Z_s\cdot (dW_s - b^{\PP'}_sds) + \int_\tau^TdK_s^{\PP'}, \qquad \PP'\mbox{-a.s.}
   \end{align*}
  Under the measure $\QQ_\tau^{\PP'}$, the process $W_s^{\QQ_\tau^{\PP'}}:=W_s-\int_\tau ^sb^{\PP'}_udu$ is a Brownian motion on $\llbracket\tau, T\rrbracket$ beginning with $W_\tau$.  
  Applying It\^o's formula with $\delta Y_s e^{\int_\tau^s a_u^{\PP'}du}$, 
   $$ \delta Y_\tau = -\int_\tau^Te^{\int_\tau^s a_u^{\PP'}du}\widehat\sigma_s^{\intercal}\delta Z_s\cdot dW_s^{\QQ_\tau^{\PP'}} % - \int_\tau^Te^{\int_\tau^s a_u^{\PP'}du}d\delta N_s 
        + \int_\tau^Te^{\int_\tau^s a_u^{\PP'}du}dK^{\PP'}_s, \qquad \PP'\mbox{-a.s.} $$
  Taking conditional expectation with respect to $\QQ_\tau^{\PP'}$ and localization procedure if necessary, we obtain that 
   $$ \delta Y_\tau = \EE^{\QQ_\tau^{\PP'}}\left[\int_\tau^Te^{\int_\tau^s a_u^{\PP'}du}dK^{\PP'}_s \bigg|\cF_\tau^+\right] 
                    \leq e^{L_yT}\EE^{\QQ_\tau^{\PP'}}\big[K^{\PP'}_T-K_\tau^{\PP'}\big|\cF_\tau^+\big]. $$
  By minimality condition \eqref{mincond} 
    $$ 0\leq Y_\tau -\esssup^\PP_{\PP'\in\cP_+(\tau,\PP)}\delta Y_\tau 
       \leq e^{L_yT}\left(\essinf^\PP_{\PP'\in\cP_+(\tau,\PP)}\EE^{\QQ_\tau^{\PP'}}\big[K^{\PP'}_T\big|\cF_\tau^+\big] - K_\tau^\PP\right) = 0, \quad \PP\mbox{-a.s.} $$
  Together with \eqref{eq:2BSDErep1} the assertion follows. 
 \end{proof}

\section{Second-order backward SDE: existence}
\label{sect:2BSDEexistence}

To prove the existence, we first define a value function $V$ by means of the solutions of BSDEs on shifted spaces, then we show that $V$ satisfies the dynamic programming principle, and introduce the corresponding pathwise right limit $V^+$. By combining the standard Doob-Meyer decomposition with our results on reflected BSDEs, we obtain that $V^+$ satisfies the required 2BSDE.

\subsection{Backward SDEs on the shifted spaces}  
  
For every $(t,\omega)\in[0,T]\times\Omega$ and $\PP\in\cP(t,\omega)$, we consider the following BSDE
\begin{equation} \label{bsdeshift}
  \cY_s^{t,\omega,\PP} = \xi^{t,\omega} + \int_s^{T-t}F_r^{t,\omega}(\cY_r^{t,\omega,\PP},\cZ_r^{t,\omega,\PP},\widehat{\sigma}_r)dr - \cZ_r^{t,\omega,\PP}\cdot dB_r, \quad \PP\mbox{-a.e.}
\end{equation}
with $s\in[0,T-t]$. 
By Theorem \ref{thm:ExistenceUniquenessBSDE} we have a unique solution $\big(\cY^{t,\omega,\PP},\cZ^{t,\omega,\PP}\big)\in\cS_{T-t}^\beta(\PP)\times\cH_{T-t}^\beta(\PP)$
 and $\cY^{t,\omega,\PP}\in\cD_{T-t}^\beta(\PP)$.

In this section, we will prove the following measurability result, which is important for the dynamic programming. 
 
\begin{proposition}  \label{prop:BSDE.measurable}
  Under Assumption \ref{assum:BSDE.Lipschitz}, the mapping $(t,\omega,\PP)\mapsto\cY^{t,\omega,\PP}[\xi,T]$ is $\cB([0,T])\otimes\cF_T\otimes\cB(\cM_1)$-measurable.
\end{proposition}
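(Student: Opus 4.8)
The plan is to realize $\cY^{t,\omega,\PP}[\xi,T]=\cY^{t,\omega,\PP}_0$ as a pointwise limit, in $(t,\omega,\PP)$, of measurable maps: first I would truncate the data so as to land in the $\LL^2$-theory, where a Picard scheme with parameter-uniform contraction rate is available, then propagate measurability through the iteration, and finally pass to the two limits. Throughout I use that, by the Blumenthal zero-one law for $\PP\in\overline{\cP}_S$, the initial value $\cY^{t,\omega,\PP}_0$ is deterministic, so that $\cY^{t,\omega,\PP}[\xi,T]$ is a genuine $\RR$-valued map of $(t,\omega,\PP)$ and all convergences reduce to convergence of real numbers.

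First I would reduce to bounded data. For $n\in\NN$ set $\xi^n:=q_n(\xi)$ and $f^n$ as in the proof of Theorem \ref{ExistenceUniqueness112}, and denote by $\big(\cY^{t,\omega,\PP,n},\cZ^{t,\omega,\PP,n}\big)$ the corresponding shifted BSDE solution, which now has bounded terminal data and bounded $f^0$. The shifted analogue of the a priori estimate of Proposition \ref{prop:deltaYestimate}, together with Assumption \ref{AssumptionOmegaBSDE}, gives $\big|\cY^{t,\omega,\PP,n}_0-\cY^{t,\omega,\PP}_0\big|\le\|\cY^{n}-\cY\|_{\cD(\PP)}\to0$ for every $(t,\omega,\PP)$, since evaluating the $\cD$-norm at $\tau=0$ dominates $|\cY^{n}_0-\cY_0|$. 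Hence it suffices to prove, for each fixed $n$, that $(t,\omega,\PP)\mapsto\cY^{t,\omega,\PP,n}_0$ is measurable.

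For fixed $n$ I would run a Picard iteration under each $\PP\in\cP(t,\omega)\subseteq\overline{\cP}_S$: set $\cY^{n,0}:=0$, $\cZ^{n,0}:=0$, and define inductively the martingale $M^{k+1}_s:=\EE^\PP\big[\xi^{n,t,\omega}+\int_0^{T-t}F^{t,\omega}_r(\cY^{n,k}_r,\cZ^{n,k}_r,\hat\sigma_r)\,dr\,\big|\,\cF_s\big]$, with $\cY^{n,k+1}_s:=M^{k+1}_s-\int_0^s F^{t,\omega}_r(\cY^{n,k}_r,\cZ^{n,k}_r,\hat\sigma_r)\,dr$ and $\cZ^{n,k+1}$ the integrand from the martingale representation of $M^{k+1}$ under $\PP$ (available since $\PP\in\overline{\cP}_S$). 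The uniform Lipschitz property of Assumption \ref{assum:BSDE.Lipschitz} yields a contraction in $\cS^2(\PP)\times\cH^2(\PP)$ whose rate depends only on $L$ and $T$, hence is uniform in $(t,\omega,\PP)$; in particular $\cY^{n,k}_0\to\cY^{n}_0$ at a parameter-independent rate. I would then prove by induction that $(t,\omega,\PP)\mapsto\big(\cY^{n,k},\cZ^{n,k}\big)$ is jointly measurable: the shift maps $(t,\omega)\mapsto\xi^{n,t,\omega}$ and $(t,\omega)\mapsto F^{t,\omega}$ are measurable by measurability of concatenation; the conditional-expectation operation $(\omega',\PP)\mapsto\EE^\PP[\Phi\,|\,\cF_s](\omega')$ is jointly measurable whenever $\Phi$ is, by a monotone-class argument on the weakly measurable structure of $\cM_1$ and measurable dependence of the r.c.p.d.; and $\cZ^{n,k+1}$ is recovered measurably from the pathwise cross-variation via $\cZ^{n,k+1}=\hat a^{-1}\,d\langle M^{k+1},B\rangle/dt$, which is $\PP$-independent on the support by \cite{Kar95}. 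A pointwise limit of measurable maps being measurable, $(t,\omega,\PP)\mapsto\cY^{n}_0$ is measurable, and a second pointwise limit in $n$ gives the claim.

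The main obstacle is the joint measurability at the inductive step, precisely because $\cZ^{n,k}$ and the martingale representation are $\PP$-dependent and a priori defined only $\PP$-a.e.: one must produce genuinely $(t,\omega,\PP)$-measurable versions rather than $\PP$-a.e.\ classes. This is handled by a pathwise (aggregated) construction of the stochastic integral and of $\langle M,B\rangle$, so that all objects are defined simultaneously for all $\PP$, combined with the measurable-kernel property of conditional expectations; the uniform-in-parameters contraction rate is exactly what lets measurability survive both passages to the limit.
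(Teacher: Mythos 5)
Your proposal is correct and shares the paper's overall skeleton (truncate the data to land in the $\LL^2$-theory, construct the truncated solutions in a jointly measurable way, then pass to the limit using Proposition \ref{prop:deltaYestimate} and Assumption \ref{AssumptionOmegaBSDE}), but it differs in the two key technical devices. The paper outsources the measurable $\LL^2$-construction to Steps 1--4 of \cite[Lemma 4.2]{LRTY18}, obtaining joint measurability of the \emph{entire solution path} $(t,\omega,s,\omega',\PP)\mapsto\cY_s^{n,t,\omega,\PP}(\omega')$; and since the truncated solutions only converge in $\cS^\beta(\PP)$-norm, it must invoke \cite[Lemma 3.2]{NN14} to extract a subsequence $n_k^{\PP}$, \emph{measurably in} $\PP$, along which the paths converge almost surely, before concluding measurability of the limit and finally of $\EE^\PP\big[\cY_0^{t,\omega,\PP}\big]$. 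You instead exploit the Blumenthal zero-one law (valid on $\overline{\cP}_S$) to reduce everything to the deterministic initial values: both your limits --- Picard index $k\to\infty$ and truncation level $n\to\infty$ --- then become pointwise limits of real-valued functions of $(t,\omega,\PP)$, so measurability passes to the limit trivially and the measurable-subsequence lemma of \cite{NN14} is not needed at all. This is a genuine simplification of the limiting step, made possible because the Proposition only concerns the scalar map, not the path. The price is that you must reconstruct the measurable $\LL^2$-solutions yourself: your Picard scheme with jointly measurable conditional expectations (via measurable r.c.p.d./kernels) and the pathwise recovery $\cZ=\widehat a^{-1}\,d\langle M,B\rangle/dt$ \`a la \cite{Kar95} is essentially a proof of what the paper cites from \cite{LRTY18}, and spelling out those aggregation details (jointly measurable c\`adl\`ag modifications of the conditional-expectation martingales, measurability of the regularization set, etc.) is where the real work sits. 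One minor inaccuracy of emphasis: the \emph{uniformity} in $(t,\omega,\PP)$ of the contraction rate is not what makes measurability survive the limits --- pointwise limits of measurable functions are measurable regardless of rates; what matters is only that convergence holds at every point, which your estimates do provide.
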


\begin{proof}
  Let $\xi^n$ and $f^n$ be defined as in Section 2. 
  Following Step 1-4 in the proof of \cite[Lemma 4.2]{LRTY20}, we may construct the solution 
            $\left(\cY_s^{n,t,\omega,\PP},\cZ_s^{n,t,\omega,\PP},\cN_s^{n,t,\omega,\PP}\right)$ of the following BSDE
             \begin{align*}
               \cY_s^{n,t,\omega,\PP} = \xi^{n,t,\omega} + \int_s^{T-t}F_r^{n,t,\omega}\big(\cY_r^{n,t,\omega,\PP},\cZ_r^{n,t,\omega,\PP},\widehat{\sigma}_r\big)dr-\int_s^{T-t}\cZ_r^{n,t,\omega,\PP}\cdot dB_r
             \end{align*}
     in a measurable way, such that  
           $(t,\omega,s,\omega',\PP)\mapsto\cY_s^{n,t,\omega,\PP}(\omega')$ is $\cB([0,T])\otimes\cF_T\otimes\cB([0,T])\otimes\cF_T\otimes\cB(\overline\cP_S)$-measurable. 
  By Proposition \ref{prop:deltaYestimate} and $\bf(H2)$ we have that 
       $$ \EE^\PP\left[\sup_{0\leq s\leq T-t}\left|\cY_s^{n,t,\omega,\PP}-\cY_s^{t,\omega,\PP}\right|^\beta\right] \longrightarrow 0, $$
     where $\cY^{t,\omega,\PP}$ is the solution associated with $\big(F^{t,\omega},\xi^{t,\omega}\big)$.
   Then, it follows from \cite[Lemma 3.2]{NN14} that there exists an increasing sequence $\left\{n_k^\PP\right\}_{k\in\NN}\subseteq\NN$ such that  $\PP\mapsto n_k^\PP$ is measurable for each $k\in\NN$ and 
     $$ \lim_{k\to\infty}\sup_{0\leq s\leq T-t}\left|\cY_s^{n_k^\PP,t,\omega,\PP}-\cY_s^{t,\omega,\PP}\right| = 0. $$
   Therefore, $(t,\omega,s,\omega',\PP)\mapsto\cY_s^{t,\omega,\PP}(\omega')$ is $\cB([0,T])\otimes\cF_T\otimes\cB([0,T])\otimes\cF_T\otimes\cB(\overline\cP_S)$-measurable, and the mapping
     $$ (t,\omega,\PP)\mapsto\YY^{t,\omega,\PP}[\xi,T] = \EE^\PP\left[\cY_0^{t,\omega,\PP}\right] $$
     is $\cB([0,T])\otimes\cF_T\otimes\cB(\overline\cP_S)$-measurable. 
   Since $\overline\cP_S\in\cB(\cM_1)$, the mapping $(t,\omega,\PP)\mapsto\YY^{t,\omega,\PP}[\xi,T]$ is $\cB([0,T])\otimes\cF_T\otimes\cB(\cM_1)$-measurable. 
\end{proof}

\begin{lemma} \label{lem:BSDEshift_tower}
 Let Assumptions \ref{assum:BSDE.Lipschitz} and \ref{AssumptionOmegaBSDE} hold true. 
 Then, for all $\tau\in\cT_{0,T}$ and $\PP\in\overline\cP_S$: 
  \begin{enumerate}[(i)]
   \item BSDE \eqref{eq:BSDE} and shifted version \eqref{bsdeshift}:
   $\mathbb{E}^\mathbb{P}\left[\mathcal{Y}^\mathbb{P}_\sigma\Big|\mathcal{F}_\sigma\right](\omega) =\mathbb{Y}^{\sigma, \omega, \mathbb{P}^{\sigma, \omega}}[\xi, T]$, for $\mathbb{P}$-a.e. $\omega\in\Omega$;
   \item Tower property of BSDE: 
           $ Y_{t}[\xi,T] = Y_{t}[Y_\sigma,\sigma] = Y_{t}\Big[\EE\big[Y_\sigma[\xi,T]\big|\cF_\sigma\big],\sigma\Big]. $
  \end{enumerate}
\end{lemma}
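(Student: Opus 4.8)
The plan is to prove (i) by first establishing the corresponding identity for square-integrable data, where the classical disintegration of BSDEs along a regular conditional probability distribution applies, and then passing to the $\LL^1$ limit via the stability estimates of Proposition \ref{prop:deltaYestimate}; part (ii) will then follow quickly from the uniqueness in Theorem \ref{thm:ExistenceUniquenessBSDE} together with the Blumenthal zero-one law for $\PP\in\overline{\cP}_S$.

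For part (i), I would first treat the truncated data $\xi^n:=q_n(\xi)$ and $f^n$ from Section \ref{sect:mainresults}, whose solution $(\cY^{n,\PP},\cZ^{n,\PP})$ lies in $\SSS^2(\PP)\times\HH^2(\PP)$. The structural fact to exploit is that, for $\PP$-a.e.\ $\omega$, the shifted pair obtained from $(\cY^{n,\PP}_{\sigma+\cdot},\cZ^{n,\PP}_{\sigma+\cdot})$ through $\omega\otimes_\sigma\cdot$ solves the shifted BSDE \eqref{bsdeshift} with data $(\xi^{n,\sigma,\omega},F^{n,\sigma,\omega})$ under $\PP^{\sigma,\omega}$. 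This is the standard disintegration of an $\LL^2$ BSDE, relying on the r.c.p.d.\ identity $\EE^{\PP^{\sigma}_\omega}[\eta]=\EE^{\PP^{\sigma,\omega}}[\eta^{\sigma,\omega}]$ from Section \ref{sect:preliminaries} and on the fact that the increment $B_{\sigma+\cdot}-B_\sigma$ is carried onto the canonical process of the shifted space, with $\widehat a$ shifting accordingly. By the $\LL^2$-uniqueness this shifted pair coincides with $(\cY^{n,\sigma,\omega,\PP^{\sigma,\omega}},\cZ^{n,\sigma,\omega,\PP^{\sigma,\omega}})$; evaluating at shifted time $0$ and invoking the Blumenthal zero-one law (so that $\cY^{n,\sigma,\omega,\PP^{\sigma,\omega}}_0$ is $\PP^{\sigma,\omega}$-a.s.\ constant) yields
\[
\EE^\PP\big[\cY^{n,\PP}_\sigma\,\big|\,\cF_\sigma\big](\omega)=\mathbb{Y}^{\sigma,\omega,\PP^{\sigma,\omega}}[\xi^n,T],\qquad \PP\text{-a.e.}\ \omega.
\]

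The second, and main, step is the passage $n\to\infty$. On the left, Proposition \ref{prop:deltaYestimate} gives $\|\cY^{n,\PP}-\cY^\PP\|_{\cD(\PP)}\to 0$, so $\cY^{n,\PP}_\sigma\to\cY^\PP_\sigma$ in $\LL^1(\PP)$ and hence $\EE^\PP[\cY^{n,\PP}_\sigma|\cF_\sigma]\to\EE^\PP[\cY^\PP_\sigma|\cF_\sigma]$ in $\LL^1(\PP)$, thus $\PP$-a.e.\ along a subsequence. On the right, I would apply the $\cD$-estimate of Proposition \ref{prop:deltaYestimate} on the shifted space under $\PP^{\sigma,\omega}$, bounding $|\mathbb{Y}^{\sigma,\omega,\PP^{\sigma,\omega}}[\xi^n,T]-\mathbb{Y}^{\sigma,\omega,\PP^{\sigma,\omega}}[\xi,T]|$ by $\cE^{\PP^{\sigma,\omega}}\big[|\xi^{\sigma,\omega}|\Ind_{\{|\xi^{\sigma,\omega}|\geq n\}}+\int_0^{T-t}|f^{0,\sigma,\omega}_s|\Ind_{\{|f^{0,\sigma,\omega}_s|\geq n\}}ds\big]$, which vanishes as $n\to\infty$ for every $(\sigma(\omega),\omega)$ by Assumption \ref{AssumptionOmegaBSDE}. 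Since the right-hand side converges for \emph{every} $\omega$ while the left converges $\PP$-a.e.\ along a subsequence, and the two sides agree for each $n$, the limits agree $\PP$-a.e., which is (i). I expect precisely this limiting step to be the main obstacle: one must reconcile the $\LL^1(\PP)$-convergence on the left with the pointwise-in-$\omega$ convergence on the right (a common subsequence, together with the measurability from Proposition \ref{prop:BSDE.measurable}, handles this), and one must verify that Assumption \ref{AssumptionOmegaBSDE} genuinely furnishes the shifted integrability under $\PP^{\sigma,\omega}$ relative to the Girsanov class $\cQ_{L_z}(\PP^{\sigma,\omega})$ used to define $\cE^{\PP^{\sigma,\omega}}$.

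For part (ii), the first identity $Y_t[\xi,T]=Y_t[Y_\sigma,\sigma]$ is the flow property: restricting the solution $(\cY^\PP,\cZ^\PP)$ of \eqref{eq:BSDE} to $[t,\sigma]$ shows that it solves the BSDE on $[t,\sigma]$ with terminal datum $\cY^\PP_\sigma=Y_\sigma[\xi,T]$ at time $\sigma$, so uniqueness in Theorem \ref{thm:ExistenceUniquenessBSDE} forces $Y_t[Y_\sigma,\sigma]=\cY^\PP_t=Y_t[\xi,T]$. For the second identity, the Blumenthal zero-one law gives $Y_\sigma[\xi,T]=\EE^\PP[Y_\sigma[\xi,T]|\cF_\sigma]$, $\PP$-a.s.; since the BSDE solution depends on its terminal datum only through its $\PP$-equivalence class, replacing $Y_\sigma$ by its $\cF_\sigma$-measurable version leaves $Y_t[\cdot,\sigma]$ unchanged, giving the claim.
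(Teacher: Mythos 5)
Your proposal is correct, but note that the paper does not actually write out a proof of this lemma: it is dispatched with the remark that assertion $(i)$ ``is a direct result of the uniqueness of the solution to BSDE'' and that $(ii)$ is ``similar to \cite[Lemma 2.7]{PTZ18}.'' Your argument supplies precisely the content that such a ``direct'' uniqueness argument is missing in the $\LL^1$ framework, and it does so by a route that is genuinely more careful than the one the paper gestures at. A literal disintegration of the $\LL^1$ solution $(\cY^\PP,\cZ^\PP)$ along the r.c.p.d., followed by an appeal to Theorem \ref{thm:ExistenceUniquenessBSDE} under $\PP^{\sigma,\omega}$, would require verifying that the shifted pair lies in $\big(\cS^\beta(\PP^{\sigma,\omega})\cap\cD(\PP^{\sigma,\omega})\big)\times\cH^\beta(\PP^{\sigma,\omega})$, i.e.\ that class-$\cD$ integrability with respect to the Girsanov family $\cQ_{L_z}$ survives conditioning; this is the delicate step the paper glosses over. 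Your truncation scheme sidesteps it: disintegration is performed only for the $\LL^2$ solutions $(\cY^{n,\PP},\cZ^{n,\PP})$, where it is classical, and the identity is transported to the limit via Proposition \ref{prop:deltaYestimate} applied both under $\PP$ and on the shifted space, with the common-subsequence argument correctly reconciling $\LL^1(\PP)$-convergence on the left with pointwise-in-$\omega$ convergence on the right. Your part $(ii)$ (flow property plus the Blumenthal zero-one law to replace $Y_\sigma$ by its $\cF_\sigma$-measurable version) is essentially the argument of \cite[Lemma 2.7]{PTZ18} that the paper points to; the only detail worth adding is that applying uniqueness on $[t,\sigma]$ uses that the terminal datum $Y_\sigma$ inherits Assumption \ref{assum:BSDExi} from the class-$\cD(\PP)$ membership of $Y$.

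One caveat, which you flag at the end but which deserves emphasis: the convergence of $\YY^{\sigma,\omega,\PP^{\sigma,\omega}}[\xi^n,T]$ requires Assumption \ref{assum:BSDExi} for the shifted data under $\PP^{\sigma,\omega}$, and Assumption \ref{AssumptionOmegaBSDE} delivers this only when $\PP^{\sigma,\omega}\in\cP\big(\sigma(\omega),\omega\big)$, i.e.\ via the stability under conditioning of Assumption \ref{assumption:ppt-prob}$(ii)$ for $\PP\in\cP_0$. Hence the stated scope ``for all $\PP\in\overline\cP_S$'' is not attained by your proof --- nor can it be by any proof from the stated hypotheses, since Assumption \ref{AssumptionOmegaBSDE} says nothing about measures outside the family $\cP$. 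This is a defect of the lemma's formulation rather than of your argument; in the paper the lemma is only ever invoked for measures in $\cP_0$ and their concatenations (e.g.\ $\PP\otimes_\tau\nu^\varepsilon$ in Lemma \ref{lem:ppt-Vsigma}), where your proof goes through verbatim.
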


We omit the proof as the assertion $(i)$ is a direct result of the uniqueness of the solution to BSDE and the assertion $(ii)$ is similar to \cite[Lemma 2.7]{PTZ18}.

\subsection{Dynamic programming}

We define the value function 
 $$ V_t(\omega):= \sup_{\PP\in\cP(t,\omega)}\YY^{t,\omega,\PP}[\xi,T], \quad\mbox{with}\quad \YY^{t,\omega,\PP}[\xi,T]:=\EE^\PP\left[\cY^{t,\omega,\PP}_0\right]. $$

 Now, we show the dynamic programming result by the measurable selection theorem. We first prove the following class $\cD(\cP)$ integrability result for the process $V$.

\begin{lemma}  \label{lem:ppt-Vsigma}
 Let Assumption \ref{assumption:ppt-prob} and Assumption \ref{AssumptionOmegaBSDE} hold true.  
 Then, the mapping $\omega \mapsto V_\tau(\omega)$ is $\cF^U_\tau$-measurable for each $[0,T]$-valued $\FF$-stopping time $\tau$.
 For any $(t,\omega)\in[0,T]\times\Omega$, 
 \begin{equation*}
   \lim_{n\to\infty}\sup_{\tau\in\cT_{0,T}}\sup_{\PP\in\cP(t,\omega)}\sup_{\QQ\in\cQ_{L_z}(\PP)}\EE^{\QQ}\left[\left|(V_\tau)^{t,\omega}\right|\Ind_{\{|(V_\tau)^{t,\omega}|\geq n\}}\right] = 0.
 \end{equation*}     
\end{lemma}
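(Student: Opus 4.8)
The statement has two parts, and I would prove them in turn.

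\emph{Measurability.} I would deduce this from Proposition~\ref{prop:BSDE.measurable} together with the analyticity of the graph $\llbracket\cP\rrbracket$ in Assumption~\ref{assumption:ppt-prob}(i). Since
\begin{equation*}
  V_t(\omega)=\sup_{\PP\in\cP(t,\omega)}\YY^{t,\omega,\PP}[\xi,T]
\end{equation*}
is the partial supremum of a Borel function of $(t,\omega,\PP)$ over the $(t,\omega)$-sections of an analytic set, the theory of upper semianalytic functions and the measurable projection theorem (as used in \cite{NN13}) give that $(t,\omega)\mapsto V_t(\omega)$ is upper semianalytic, hence $\cB([0,T])\otimes\cF_T$-universally measurable. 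Because $\cP(t,\omega)=\cP(t,\omega_{\cdot\wedge t})$ and $\xi^{t,\omega},F^{t,\omega}$ depend on $\omega$ only through $\omega_{\cdot\wedge t}$, we have $V_t(\omega)=V_t(\omega_{\cdot\wedge t})$; joint universal measurability together with this pathwise dependence makes $V$ an $\FF^U$-progressively measurable process, and therefore $V_\tau$ is $\cF^U_\tau$-measurable for every $\FF$-stopping time $\tau$.

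\emph{A priori bound.} Applying estimate~\eqref{BSDEEstimationEq1} to the shifted BSDE~\eqref{bsdeshift} at time $0$, and using that $\cY^{s,\bar\omega,\PP}_0$ is deterministic under $\PP\in\overline{\cP}_S$ by the Blumenthal zero--one law, I obtain for every $(s,\bar\omega)$
\begin{equation*}
  |V_s(\bar\omega)|
  \;\le\;
  \Psi_s(\bar\omega)
  :=\sup_{\PP\in\cP(s,\bar\omega)}\cE^\PP\Big[|\xi^{s,\bar\omega}|+\int_0^{T-s}|f^{0,s,\bar\omega}_r|\,dr\Big]
  \;\le\;
  \cE^{\cP(s,\bar\omega)}\big[\zeta^{s,\bar\omega}\big],
  \qquad \zeta:=|\xi|+\int_0^T|f^0_r|\,dr .
\end{equation*}
Since $\Psi\ge 0$ dominates $|V|$ pointwise, $|V_\tau|\Ind_{\{|V_\tau|\ge n\}}\le\Psi_\tau\Ind_{\{\Psi_\tau\ge n\}}$, an inequality preserved by the shift by $(t,\omega)$. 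Hence it suffices to prove the claimed limit with $(V_\tau)^{t,\omega}$ replaced by $(\Psi_\tau)^{t,\omega}$; note $\Psi$ is $\FF^U$-progressively measurable by the same analytic-set argument as above, so the events $A^n_\tau:=\{(\Psi_\tau)^{t,\omega}\ge n\}$ are well defined.

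\emph{Nonlinear Doob class-$\cD$ estimate.} Mimicking the classical proof, I would establish the conditional pull-out inequality: fixing $(t,\omega)$, $\PP\in\cP(t,\omega)$, $\QQ\in\cQ_{L_z}(\PP)$ and setting $\theta:=\tau^{t,\omega}-t$,
\begin{equation*}
  \EE^\QQ\big[(\Psi_\tau)^{t,\omega}\Ind_{A}\big]
  \;\le\;
  \cE^{\cP(t,\omega)}\big[\zeta^{t,\omega}\Ind_{A}\big]
  \qquad\text{for all } A\in\cF^{+,\PP}_\theta .
\end{equation*}
This follows by selecting, measurably and up to an arbitrary $\varepsilon>0$, near-optimal kernels in the definition of $\Psi_\tau$ and concatenating them at $\theta$ with $\PP$ and with the density of $\QQ$, using stability of $\cP$ under conditioning and concatenation (Assumption~\ref{assumption:ppt-prob}(ii)--(iii)) together with the concatenation stability of $\cQ_{L_z}$. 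Taking $A=\Omega$ gives $\sup_{\tau,\PP,\QQ}\EE^\QQ[(\Psi_\tau)^{t,\omega}]\le\cE^{\cP(t,\omega)}[\zeta^{t,\omega}]=:C$, finite by Assumption~\ref{AssumptionOmegaBSDE}; Markov's inequality then yields $\cE^{\cP(t,\omega)}[\Ind_{A^n_\tau}]\le C/n$ uniformly in $\tau$. Finally, Assumption~\ref{AssumptionOmegaBSDE} is exactly $\lim_n\cE^{\cP(t,\omega)}[\zeta^{t,\omega}\Ind_{\{\zeta^{t,\omega}\ge n\}}]=0$, so the equivalence in Proposition~\ref{AppProp1} provides, for each $\varepsilon>0$, a $\delta>0$ with $\cE^{\cP(t,\omega)}[\Ind_A]\le\delta\Rightarrow\cE^{\cP(t,\omega)}[\zeta^{t,\omega}\Ind_A]\le\varepsilon$. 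Choosing $n$ with $C/n\le\delta$ and combining the three displays closes the estimate and proves the limit.

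The crux is the pull-out inequality, the non-dominated analogue of removing a conditional expectation from inside an indicator. Its proof requires a measurable selection of near-optimal conditional measures and Girsanov densities (legitimate thanks to the analyticity of $\llbracket\cP\rrbracket$ and Proposition~\ref{prop:BSDE.measurable}) and a careful concatenation keeping the glued measure in $\cP(t,\omega)$ with density still in $\cQ_{L_z}$, while checking that the truncation event $A^n_\tau$ lives in the correct $\sigma$-field at $\theta$ so that conditioning does not move it. This bookkeeping, rather than any single estimate, is the main obstacle.
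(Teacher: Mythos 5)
Your proposal is correct in outline and reaches the lemma by a genuinely different organization of the same underlying machinery. The paper never introduces your dominating process $\Psi$: it applies the measurable selection theorem directly to $V_\tau$ (legitimized by Proposition \ref{prop:BSDE.measurable}) to produce an $\cF^U_\tau$-measurable $\varepsilon$-optimal kernel $\nu^\varepsilon$ -- which is also how it obtains the measurability claim, as a by-product of the selection inequality -- then uses the consistency Lemma \ref{lem:BSDEshift_tower}$(i)$ to identify $\YY^{\tau,\omega,\nu^\varepsilon(\omega)}[\xi,T]$ with $\EE^{\PP\otimes_\tau\nu^\varepsilon}\big[\cY^{\PP\otimes_\tau\nu^\varepsilon}_\tau\big|\cF_\tau\big]$, bounds this via estimate \eqref{BSDEEstimationEq1} under the concatenated measure, and transfers uniform integrability through Corollary \ref{cor:estimateYA}, i.e.\ by splitting the BSDE solution into its $\LL^2$-approximation plus a remainder controlled by Proposition \ref{prop:deltaYestimate} (whence the $C_m\delta^{1/2}$ term, via Cauchy--Schwarz). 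You instead dominate $|V|$ pointwise by $\Psi$ using \eqref{BSDEEstimationEq1}, and push all the work into a pull-out/tower inequality for the sublinear expectation of the data $\zeta$, closing with the elementary truncation $\cE[\zeta\Ind_A]\le\cE\big[\zeta\Ind_{\{\zeta\ge N\}}\big]+N\,\cE[\Ind_A]$; this is cleaner than Corollary \ref{cor:estimateYA} at that step, since no $\LL^2$ theory is needed. What you pay for sits inside your ``crux'': the pull-out inequality needs a two-layer measurable selection (an $\varepsilon$-optimal measure kernel \emph{and} an $\varepsilon$-optimal Girsanov kernel), and the measurability that legitimizes it is not literally Proposition \ref{prop:BSDE.measurable}, which concerns $(t,\omega,\PP)\mapsto\cY^{t,\omega,\PP}[\xi,T]$ rather than $(t,\omega,\PP)\mapsto\cE^{\PP}[\zeta^{t,\omega}]$; you would either rerun its proof for the generator $L_z\big|\widehat\sigma^{\bf T}z\big|$ with terminal condition $\zeta$ -- equivalently, recognize $\Psi$ as the value function of the dominating 2BSDE, after which your pull-out inequality is proved exactly as the paper treats $V$ -- or carry out the Girsanov-kernel selection directly. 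Likewise, Proposition \ref{AppProp1} is stated for the dominated family $\cQ_L(\PP)$, but the implication you invoke extends verbatim to $\cE^{\cP(t,\omega)}$. With these two points made explicit your route closes completely; it is not shorter than the paper's, but it isolates more transparently that the lemma is really a statement about sublinear expectations of $\zeta$ rather than about the BSDE structure of $V$ itself.
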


\begin{proof}
 By the measurability result proved in Proposition \ref{prop:BSDE.measurable} and the measurable selection theorem (see, e.g., \cite[Proposition 7.50]{BS96}), for each $\varepsilon>0$, 
   there exists an $\cF_\tau^U$-measurable kernal $\nu^\varepsilon:\omega\mapsto\nu^{\varepsilon}(\omega)\in\cP\big(\tau(\omega),\omega\big)$, such that for all $\omega\in\Omega$
   \begin{align} \label{eq:selection}
     V_\tau(\omega) \leq \YY^{\tau,\omega,\nu^{\varepsilon}(\omega)}[\xi,T] + \varepsilon. 
   \end{align}   
   This implies that $\omega \mapsto V_\tau(\omega)$ is $\cF^U_\tau$-measurable.
 Further it follows from Lemma \ref{lem:BSDEshift_tower} (i) that 
  \begin{align} \label{eq:consistsol}
     \YY^{\tau, \omega, \nu^\varepsilon(\omega)}[\xi,T] 
       = \EE^{\PP\otimes_\tau\nu^\varepsilon}\left[\mathcal{Y}^{\PP\otimes_\tau \nu^\varepsilon}_\tau\Big|\mathcal{F}_\tau\right](\omega), \quad \mathbb{P}\mbox{-a.s.}\quad 
         \mbox{for each}~~\mathbb{P}\in \mathcal{P}_0.
  \end{align}
 Together with \eqref{eq:selection} we have for $\QQ\in \cQ_{L_z}(\PP)$
  \begin{align*}
    \dbE^\dbQ\big[|V_\tau|\big] 
     & \leq \dbE^\dbQ\left[\EE^{\PP\otimes_\tau\nu^\varepsilon}\left[\big|\mathcal{Y}^{\PP\otimes_\tau \nu^\varepsilon}_\tau\big| \Big|\mathcal{F}_\tau\right]\right] +\e
       \leq \cE^\dbP\Big[\big|\mathcal{Y}^{\PP\otimes_\tau \nu^\varepsilon}_\tau\big| \Big]+ \e \\
     & \leq \cE^\dbP\left[|\xi| + \int_0^T \big|f^0_s\big| ds \right]+\e
       \leq \cE^{\cP_0} \left[|\xi| + \int_0^T \big|f^0_s\big| ds  \right]+\e.
 \end{align*}
 The second last inequality is due to the estimate \eqref{BSDEEstimationEq1} on the BSDE solution. So we have
 \begin{equation} \label{Vuniformbound}
  \sup_{\tau\in\cT_{0,T}} \cE^\cP\big[|V_\tau|\big] \leq \cE^\cP \left[|\xi| + \int_0^T \big|f^0_s\big|ds\right]+\e <\infty.
 \end{equation}
 Further, fix any $\d>0$ and $A\in \cF^U_\tau$ such that $\cE^{\cP_0}[\Ind_A]<\d$. 
 It follows again from \eqref{eq:selection} and \eqref{eq:consistsol} that for $\QQ\in \cQ_{L_z}(\PP)$
  \begin{align*}
    \dbE^\dbQ\big[|V_\tau|\Ind_A\big] 
      & \leq \dbE^\dbQ\left[\EE^{\PP\otimes_\tau\nu^\varepsilon}\left[\big|\mathcal{Y}^{\PP\otimes_\tau \nu^\varepsilon}_\tau\big|\Ind_A \Big|\mathcal{F}_\tau\right]\right] +\e \leq \cE^\dbP\Big[\big|\mathcal{Y}^{\PP\otimes_\tau \nu^\varepsilon}_\tau\big| \Ind_A \Big]+\e \\
      & \leq \cE^\dbP\left[|\xi|\Ind_{\{|\xi|\geq m\}}+\int_0^T\big|f^0_s\big|\Ind_{\{|f^0_s|\geq m\}}ds\right] + C_m \d^\frac12 +\e, \q\mbox{for all $m\in \dbN$}.
  \end{align*}
 The last inequality is due to Corollary \ref{cor:estimateYA}. Now let $m$ be large enough such that
   $$ \cE^{\cP_0}\left[|\xi|\Ind_{\{|\xi|\geq m\}}+\int_0^T\big|f^0_s\big|\Ind_{\{|f^0_s|\geq m\}}ds\right] <\e $$
 and $\d$ be small enough such that $C_m \d^\frac12 <\e$. Then we obtain $ \dbE^\dbQ\big[|V_\tau | \Ind_A\big]  <3\e$. Further note that
 the choice of $m$ and $\d$ is independent from $\dbP$ and $\tau$, so we have
   $$ \sup_{\tau\in\cT_{0,T}}\cE^{\cP_0}\left[ |V_\tau| \Ind_A \right] <3\e. $$
 Finally, since
   $$ \sup_{\tau\in\cT_{0,T}}\cE^{\cP_0}\big[\Ind_{\{|V_\tau \ge n|\}}\big] \le \frac{1}{n}  \sup_{\tau\in\cT_{0,T}}\cE^{\cP_0}[|V_\tau|], $$
  for $n$ big enough, $ \sup_{\tau\in\cT_{0,T}}\cE^{\cP_0}[\Ind_{\{|V_\tau|\ge n\}}]\le \d$
  and thus 
   $ \sup_{\tau\in\cT_{0,T}}\cE^{\cP_0}\big[|V_\tau|\Ind_{\{|V_\tau|\ge n\}}\big] <3\e$.
\end{proof}  
  
  Using the last integrability result, we now show the following results using the same argument as in  \cite{PTZ18} and \cite{LRTY20}.

\begin{proposition}\label{thm:dpp}
Under Assumption \ref{AssumptionOmegaBSDE}, we have 
   \begin{equation}  \label{dppeq}
     V_t(\omega) = \sup_{\PP\in\cP(t,\omega)}\YY^{t,\omega,\PP}\big[V_\tau,\tau\big]
     ~~\mbox{for all}~~
     (t,\omega)\in[0,T]\times\Omega,
     ~\mbox{and}~
      \tau\in\cT_{t,T}.
   \end{equation}
Moreover, we have for all $\PP\in\cP_0$ and $\tau\in\cT_{0,T}$:
   \begin{equation}
     V_t = \esssup_{\PP'\in\cP(t,\PP)}^{\PP}\EE^{\PP'}\left[\cY_t^{\PP'}\big[V_{\tau},\tau\big]\Big|\cF_t\right], \quad \PP\mbox{-a.s.} 
   \end{equation}
\end{proposition}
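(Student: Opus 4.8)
The plan is to follow the strategy of Soner, Touzi \& Zhang \cite{STZ12} and Possama\"i, Tan \& Zhou \cite{PTZ18}, establishing \eqref{dppeq} by proving the two inequalities separately. Throughout, fix $(t,\omega)$ and $\tau\in\cT_{t,T}$, and write $\theta:=\tau^{t,\omega}-t$. Before starting, I would record that the right-hand side is well-defined: by Lemma \ref{lem:ppt-Vsigma} the map $\omega'\mapsto(V_\tau)^{t,\omega}(\omega')$ is measurable and satisfies the uniform integrability under $\cQ_{L_z}$ required by Assumption \ref{assum:BSDExi}, so the shifted BSDE with terminal datum $(V_\tau)^{t,\omega}$ at $\theta$ admits a unique solution by Theorem \ref{thm:ExistenceUniquenessBSDE}, and $\YY^{t,\omega,\PP}[V_\tau,\tau]$ is finite.

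For the inequality $V_t(\omega)\le\sup_{\PP\in\cP(t,\omega)}\YY^{t,\omega,\PP}[V_\tau,\tau]$, I would fix $\PP\in\cP(t,\omega)$. By Assumption \ref{assumption:ppt-prob}$(ii)$ (stability under conditioning), for $\PP$-a.e.\ $\omega'$ one has $\PP^{\theta,\omega'}\in\cP\big(\tau(\omega\otimes_t\omega'),\omega\otimes_t\omega'\big)$, so Lemma \ref{lem:BSDEshift_tower}$(i)$ gives
$$
\EE^\PP\big[\cY^{t,\omega,\PP}_\theta\,\big|\,\cF_\theta\big](\omega')
=\YY^{\tau,\,\omega\otimes_t\omega',\,\PP^{\theta,\omega'}}[\xi,T]
\le V_\tau(\omega\otimes_t\omega')=(V_\tau)^{t,\omega}(\omega'),
$$
by the very definition of $V$. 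Applying the tower property Lemma \ref{lem:BSDEshift_tower}$(ii)$ to rewrite $\cY^{t,\omega,\PP}_0$ as the BSDE started from the conditional terminal value at $\theta$, and then the comparison result Theorem \ref{thm:BSDE.Stab.Comp}$(ii)$ (monotonicity of the BSDE in its terminal datum), I would obtain $\cY^{t,\omega,\PP}_0\le\cY^{t,\omega,\PP}_0\big[(V_\tau)^{t,\omega},\theta\big]$. Taking $\EE^\PP$ and the supremum over $\PP\in\cP(t,\omega)$ yields the claimed inequality.

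The reverse inequality is the main obstacle, as it requires a measurable selection together with concatenation and, in the present $\LL^1$ setting, a careful control of the selection error. Fix $\PP\in\cP(t,\omega)$ and $\varepsilon>0$. Exactly as in the proof of Lemma \ref{lem:ppt-Vsigma}, Proposition \ref{prop:BSDE.measurable} and the measurable selection theorem \cite[Proposition 7.50]{BS96} provide an $\cF^U_\tau$-measurable kernel $\nu^\varepsilon$ with $\nu^\varepsilon(\omega')\in\cP\big(\tau(\omega\otimes_t\omega'),\omega\otimes_t\omega'\big)$ and $V_\tau(\omega\otimes_t\omega')\le\YY^{\tau,\,\omega\otimes_t\omega',\,\nu^\varepsilon(\omega')}[\xi,T]+\varepsilon$. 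By Assumption \ref{assumption:ppt-prob}$(iii)$ the concatenation $\overline\PP:=\PP\otimes_\theta\nu^\varepsilon$ lies in $\cP(t,\omega)$, satisfies $\overline\PP^{\theta,\omega'}=\nu^\varepsilon(\omega')$ and $\overline\PP=\PP$ on $\cF_\theta$. Hence, by Lemma \ref{lem:BSDEshift_tower}$(i)$ again,
$$
\EE^{\overline\PP}\big[\cY^{t,\omega,\overline\PP}_\theta\,\big|\,\cF_\theta\big](\omega')
=\YY^{\tau,\,\omega\otimes_t\omega',\,\nu^\varepsilon(\omega')}[\xi,T]
\ge(V_\tau)^{t,\omega}(\omega')-\varepsilon .
$$
Using once more the tower property and the comparison theorem, followed by the a priori estimate \eqref{BSDEEstimationEq1} (which bounds the effect of the additive $\varepsilon$ in the terminal datum by a constant multiple of $\varepsilon$), I would get $\YY^{t,\omega,\overline\PP}[\xi,T]\ge\YY^{t,\omega,\overline\PP}[V_\tau,\tau]-C\varepsilon$. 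Since $V_\tau$ is $\cF_\tau$-measurable and $\overline\PP=\PP$ on $\cF_\theta$, the BSDE on $\llbracket 0,\theta\rrbracket$ with terminal $(V_\tau)^{t,\omega}$ depends on the measure only through $\PP$, so $\YY^{t,\omega,\overline\PP}[V_\tau,\tau]=\YY^{t,\omega,\PP}[V_\tau,\tau]$. Therefore $V_t(\omega)\ge\YY^{t,\omega,\overline\PP}[\xi,T]\ge\YY^{t,\omega,\PP}[V_\tau,\tau]-C\varepsilon$; taking the supremum over $\PP$ and letting $\varepsilon\downarrow 0$ completes \eqref{dppeq}.

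Finally, I would deduce the essential-supremum form. Projecting the pathwise identity \eqref{dppeq} onto $\cF_t$ under a fixed $\PP\in\cP_0$ and using Lemma \ref{lem:BSDEshift_tower}$(i)$ to identify $\EE^{\PP'}\big[\cY^{\PP'}_t[V_\tau,\tau]\big|\cF_t\big]$ with $\YY^{t,\cdot,(\PP')^{t,\cdot}}[V_\tau,\tau]$, the family $\big\{\YY^{t,\cdot,\PP'}[V_\tau,\tau]:\PP'\in\cP(t,\PP)\big\}$ is shown to be upward directed (via concatenation at $t$, Assumption \ref{assumption:ppt-prob}$(iii)$), so that the pointwise supremum becomes an essential supremum; the integrability needed to interchange supremum and conditional expectation is furnished by Lemma \ref{lem:ppt-Vsigma}. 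This is the standard argument of \cite{PTZ18} and \cite{LRTY18}, and yields the second displayed identity, $\PP$-a.s.
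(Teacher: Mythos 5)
Your proposal is correct and follows essentially the same route as the paper: the paper's proof of Proposition \ref{thm:dpp} is simply a citation to \cite[Theorem 6.7]{LRTY18}, i.e.\ the standard two-inequality dynamic programming argument (stability under conditioning plus the tower property and comparison for one direction, measurable selection and concatenation for the other, then concatenation at $t$ for the $\esssup$ form), with the $\cL^1$-integrability of $V_\tau$ from Lemma \ref{lem:ppt-Vsigma} serving exactly the role you assign it, namely making $\YY^{t,\omega,\PP}[V_\tau,\tau]$ well defined via Theorem \ref{thm:ExistenceUniquenessBSDE} and controlling the selection error. Your write-up is precisely the argument the paper outsources.
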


\begin{proof}
See \cite[Theorem 6.7]{LRTY20}.
\end{proof}

Based on the previous result we can define the right limit of the value function, and the next result shows that $V^+$ is actually a semimartingale under any $\PP\in\cP_0$, and gives its decomposition.

\begin{lemma} \label{lemma:2BSDE-semimart-rep}
 Let Assumptions \ref{assum:BSDE.Lipschitz}, \ref{assumption:ppt-prob} and \ref{AssumptionOmegaBSDE} hold true.  The right limit
   \begin{equation} \label{regularity}
     V^+_t(\omega) :=\lim_{r\in\QQ,r\downarrow t} V_r(\omega)
   \end{equation}
   exists $\cP_0$-q.s.~and the process $V^+$ is c\`adl\`ag $\cP_0$-q.s.
  Also we have:
 \begin{enumerate}[$(i)$]
  \item The process $V^+\in\cD(\cP_0)$.
 
  \item For any $\FF^+$-stopping times $0\leq \tau_1\leq\tau_2\leq T$
        \begin{equation}   \label{V=esssupYV}
          V^+_{\tau_1} = \esssup_{\PP'\in\cP_+(\tau_1,\PP)}^\PP\cY^{\PP'}_{\tau_1}\big[V^+_{\tau_2},\tau_2\big], \quad \PP\mbox{-a.s.}
        \end{equation}
 \end{enumerate}
Further, for any $\PP\in\cP_0$ and $\beta\in(0,1)$, there is $\big(Z^\PP,K^\PP\big)\in\cH^\beta\big(\PP,\FF^{\PP}\big)\times\cI^\beta\big(\PP,\FF^{\PP}\big)$, such that 
  $$ V^+_t = \xi + \int_t^T F_s\big(V^+_s,Z^\PP_s,\widehat\sigma_s\big)ds - \int_t^TZ_s^\PP\cdot dB_s + \int_t^TdK_s^\PP,  \quad \PP\mbox{-a.s.} $$ 
 Moreover, there is some $\FF^{\cP_0}$-progressively measurable process $Z$ which aggregates the family $\big\{Z^\PP\big\}_{\PP\in\cP_0}$. 
\end{lemma}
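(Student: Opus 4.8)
The plan is to follow the classical programme of Soner, Touzi \& Zhang \cite{STZ12} and Possama\"{\i}, Tan \& Zhou \cite{PTZ18}, adapted to the class $\cD$/$\LL^1$ framework built in Sections \ref{sect:preliminaries}--\ref{sect:RBSDE}. First I would produce the right limit $V^+$ by a Doob-type regularization. Fixing $\PP\in\cP_0$ and using the dynamic programming relation of Proposition \ref{thm:dpp} with the admissible choice $\PP'=\PP$, one obtains $V_s\ge\cY^\PP_s[V_t,t]$, $\PP$-a.s., for rationals $s\le t$. Linearizing the Lipschitz generator through the bounded coefficients $(a^\PP,b^\PP)$ and performing the associated Girsanov change of measure $\QQ\in\cQ_{L_z}(\PP)$, this inequality states that a suitably discounted version of $V$ is a $\QQ$-supermartingale along the rationals. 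Doob's downcrossing inequality then yields the existence of the right limit $V^+_t$ for $\QQ$-a.e.\ path; since $\QQ\sim\PP$ and $\PP\in\cP_0$ is arbitrary, $V^+$ exists $\cP_0$-q.s.\ and is c\`adl\`ag $\cP_0$-q.s.

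The class $\cD(\cP_0)$ membership, assertion $(i)$, is inherited from the uniform integrability bound of Lemma \ref{lem:ppt-Vsigma}, namely $\lim_{n\to\infty}\sup_\tau\cE^{\cP_0}[|V_\tau|\Ind_{\{|V_\tau|\ge n\}}]=0$, which passes to the c\`adl\`ag right limit $V^+$. For assertion $(ii)$, I would pass to the limit $r\downarrow\tau_1$ along rationals in the dynamic programming identity of Proposition \ref{thm:dpp}, with terminal data $V^+_{\tau_2}$ in place of $V_{\tau_2}$. The interchange of this limit with the $\esssup$ and with the BSDE map $\cY^{\PP'}[\cdot,\tau_2]$ is justified by the right-continuity of $V^+$, the BSDE stability estimates of Theorem \ref{thm:BSDE.Stab.Comp}, and the observation that $\cP_+(\tau_1,\PP)=\bigcup_{h>0}\cP((\tau_1+h)\wedge T,\PP)$ consists exactly of the measures agreeing with $\PP$ slightly after $\tau_1$, which is why the right limit (and not $V$ itself) enters.

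For the semimartingale decomposition under a fixed $\PP$, I would read $(ii)$ with $\PP'=\PP$ and $\tau_2=T$ as saying that $V^+$ is an $f$-supersolution of the BSDE with data $\xi$ under $\PP$: after the same linearization and change of measure as above, $V^+$ becomes a genuine supermartingale of class $\cD(\QQ)$, to which the Doob--Meyer decomposition applies, producing a predictable nondecreasing process $K^\PP$ with $K^\PP_0=0$ together with a uniformly integrable martingale part. Since $\PP\in\overline\cP_S$ enjoys the martingale representation property, this martingale part is represented as $\int Z^\PP_s\cdot dB_s$, which gives $V^+_t=\xi+\int_t^TF_s(V^+_s,Z^\PP_s,\widehat\sigma_s)ds-\int_t^TZ^\PP_s\cdot dB_s+\int_t^TdK^\PP_s$, $\PP$-a.s., on the $\PP$-augmented filtration $\FF^\PP$. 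The integrability $Z^\PP\in\cH^\beta(\PP)$ and $K^\PP\in\cI^\beta(\PP)$ for every $\beta\in(0,1)$ then follows from the a priori estimate of Proposition \ref{EstimationZNK}, whose derivation uses only the supersolution structure (a nondecreasing $K$) and hence applies here with the degenerate obstacle $S=-\infty$, once we know $V^+\in\cS^\beta(\PP)\cap\cD(\PP)$.

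Finally, aggregation of $\{Z^\PP\}_{\PP\in\cP_0}$ is obtained pathwise: the cross-variation $\langle V^+,B\rangle$ can be defined universally as in \cite{Kar95}, and comparing the decompositions under two measures gives $\widehat a_tZ^\PP_t\,dt=d\langle V^+,B\rangle_t$, so that $Z^\PP=\widehat a^{-1}\,d\langle V^+,B\rangle/dt$ does not depend on $\PP$ up to $\cP_0$-q.s.\ equality, defining a single $\FF^{\cP_0}$-progressively measurable aggregator $Z$. I expect the main obstacle to be this decomposition step in the class $\cD$/$\LL^1$ setting rather than in $\LL^2$: one must run the Doob--Meyer decomposition and the martingale representation while controlling $V^+$ only in $\cD(\PP)$ and $\cS^\beta(\PP)$ with $\beta<1$, and extract the $\cH^\beta$-bound on $Z^\PP$ and $\cI^\beta$-bound on $K^\PP$ from the sublinear estimates of Proposition \ref{EstimationZNK}. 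The simultaneous regularization of $V$ under the non-dominated family $\cP_0$ is the other delicate point, which is resolved by the equivalence of each $\QQ\in\cQ_{L_z}(\PP)$ with $\PP$ so that $\PP$-by-$\PP$ arguments assemble into a $\cP_0$-q.s.\ statement.
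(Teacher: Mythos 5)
Your overall skeleton (downcrossing regularization of $V$, class-$\cD$ membership via Lemma \ref{lem:ppt-Vsigma}, passing to the limit in the dynamic programming principle, a $\PP$-by-$\PP$ decomposition, and aggregation of $Z$ through the universal bracket $\langle V^+,B\rangle$) matches the route the paper takes, which it does not spell out but delegates to \cite[Proposition 6.8]{LRTY18} and Step 1 of the existence proof of \cite[Theorem 3.12]{LRTY18}. However, your decomposition step contains a genuine gap: the linearization is circular. The relation $V^+_{\tau_1}\geq \cY^\PP_{\tau_1}\big[V^+_{\tau_2},\tau_2\big]$ is an $f$-supermartingale property, and for a generator that depends on $z$ it cannot be converted into a genuine supermartingale property of a discounted $V^+$ under one fixed $\QQ\in\cQ_{L_z}(\PP)$: the Girsanov density you would need is $G^{b^\PP(V^+,Z^\PP)}_T$, i.e.\ it is built from precisely the process $Z^\PP$ that the Doob--Meyer decomposition is supposed to produce. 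What the $f$-supermartingale property actually yields is, for each pair $\tau_1\leq\tau_2$, an inequality under a measure depending on that pair (through the linearization along the BSDE solution with terminal data $V^+_{\tau_2}$); no single measure works for all times simultaneously, so the classical Doob--Meyer theorem plus martingale representation cannot be invoked as you describe, and even if a decomposition $V^+=V^+_0+M-A$ under some $\QQ$ were available, identifying the drift as $F_s(V^+_s,Z^\PP_s,\widehat\sigma_s)\,ds$ would again require the linearization coefficients attached to the unknown $Z^\PP$.

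This circularity is exactly why Section \ref{sect:RBSDE} exists, as announced in the preamble of Section \ref{sect:2BSDEexistence}: the decomposition is obtained by combining the standard Doob--Meyer decomposition with the $\LL^1$ theory of \emph{reflected} BSDEs. Concretely, under each $\PP$ one solves the reflected BSDE with terminal data $\xi$, generator $f$ and obstacle $S=V^+$ (admissible since $(V^+)^+\in\cD(\PP)$ by $(i)$); by the Snell-envelope representation of the reflected solution and the $f$-supermartingale property \eqref{V=esssupYV} with $\PP'=\PP$, the reflected solution coincides with $V^+$, the Skorokhod condition becomes vacuous, and the reflected equation itself is the desired supersolution form, with $(Z^\PP,K^\PP)\in\cH^\beta(\PP)\times\cI^\beta(\PP)$ furnished by Theorem \ref{RBSDEmainthm} and Proposition \ref{EstimationZNK}. (Peng's penalization/monotone-limit theorem would be an alternative nonlinear Doob--Meyer argument, but it is an $\LL^2$ result and would have to be redeveloped in the present class-$\cD$ setting; the RBSDE route is that redevelopment.) A milder instance of the same confusion appears in your regularization step: the discounted process is not a $\QQ$-supermartingale along all rationals for one fixed $\QQ$, since the measure produced by the linearization depends on the finite set of times considered; the argument survives because $\cQ_{L_z}(\PP)$ is stable under concatenation, so the pairwise measures can be patched over any finite partition and the resulting downcrossing bound is uniform (this is the Chen--Peng-type downcrossing inequality for $f$-supermartingales used in \cite{PTZ18,LRTY18}). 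Unlike the decomposition step, that looseness is repairable without new ideas; your final aggregation argument for $Z$ via $\widehat a_t Z^\PP_t\,dt=d\langle V^+,B\rangle_t$ is correct and is the same as the paper's.
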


\begin{proof}
See \cite[Proposition 6.8]{LRTY20} and the step 1 in the proof of the existence part of \cite[Theorem 3.12]{LRTY20}.
\end{proof}

\subsection{Existence through dynamic programming}                                                                                                                                                                                                                                                                                                                                                                                                                                                                                                                                  

Lemma \ref{lemma:2BSDE-semimart-rep} above provides us a candidate $(Y,Z)=(V^+,Z)$ of the solution of the 2BSDE \eqref{eq:2BSDE}. 
Then, it sufficies to verify that the family $\big\{K^\PP\big\}_{\PP\in\cP_0}$ satisfies the minimality condition \eqref{mincond}. 

\begin{proof}
 Let $\PP\in\cP_0$, $\tau\in\cT_{0,T}$ and $\PP'\in\cP_+(\tau,\PP)$ be arbitrary. 
 Let $\big(\cY^{\PP'},\cZ^{\PP'}\big)$ be the solution of 
  $$ \cY_t^{\PP'} = \xi + \int_t^TF_s\big(\cY^{\PP'},\cZ^{\PP'},\widehat\sigma_s\big)ds - \int_t^T\cZ^{\PP'}_s\cdot dB_s, \quad {\PP'}\mbox{-a.s.}  $$
 Define $\delta Y:=V^+-\cY^{\PP'}$ and $\delta Z:=Z-\cZ^{\PP'}$. % and $\delta N^{\PP'}:=N^{\PP'}-\cN^{\PP'}$.
 Then, 
   \begin{align*}
     \delta Y_\tau &= \int_\tau^T\Big(F_s\big(V^+_s,Z_s, \widehat{\sigma}_s\big) - F_s\big(\cY_s^{\PP'},\cZ_s^{\PP'},\widehat{\sigma}_s\big)\Big)ds - \int_\tau^T\delta Z_s\cdot dB_s 
                         + \int_\tau^TdK_s^{\PP'}  \\
          &= \int_\tau^T\big(a_s^{\PP'}\delta Y_s + b^{\PP'}_s\cdot\widehat{\sigma}_s^{\intercal}\delta Z_s\big) ds - \int_\tau^T\widehat{\sigma}_s^{\intercal}\delta Z_s\cdot dW_s 
                         + \int_\tau^TdK_s^{\PP'}, \quad \PP'\mbox{-a.s.} 
   \end{align*}
  where $a^{\PP'}$ and $b^{\PP'}$ are two bounded processes bounded by $L$.  
 Under the measure $\QQ^{\PP'}_{\tau}$, we have 
  $$ \delta Y_\tau = \int_\tau^T a_s^{\PP'}\delta Y_s ds - \int_\tau^T\widehat{\sigma}_s^{\intercal}\delta Z_s\cdot dW^{\QQ_\tau^{\PP'}}_s 
                         + \int_\tau^TdK_s^{\PP'}, \qquad \PP'\mbox{-a.s.} $$
 We next get rid of the linear term in $\delta Y$ by introducing 
   $ \overline{\delta Y}_s:= \delta Y_se^{\int_\tau^s a_u^{\PP'}du}$, $\tau\leq s\leq T $
   so that 
   $$ \delta Y_\tau = -\int_\tau^Te^{\int_\tau^s a_u^{\PP'}du} \widehat{\sigma}_s^{\intercal}\delta Z_s\cdot dW^{\QQ_\tau^{\PP'}}_s 
        + \int_\tau^Te^{\int_\tau^s a_u^{\PP'}du}dK_s^{\PP'}, \quad \PP'\mbox{-a.s.} $$
 Taking conditional expectation with respect to $\QQ_\tau^{\PP'}$ and localization procedure if necessary, we obtain that 
   $$ \delta Y_\tau = \EE^{\QQ_\tau^{\PP'}}\bigg[\int_\tau^Te^{\int_\tau^sa_u^{\PP'}du}dK_s^{\PP'} \bigg| \cF_\tau^+\bigg] 
         \geq e^{-L_yT}\EE^{\QQ_\tau^{\PP'}}\big[K^{\PP'}_T-K^{\PP'}_\tau\big|\cF_\tau^+\big], $$
  therefore, 
   $$ 0\leq \EE^{\QQ_\tau^{\PP'}}\big[K^{\PP'}_T\big|\cF_\tau^+\big]-K^{\PP}_\tau \leq e^{L_yT}\delta Y_\tau. $$
 Then, the result follows immediately thanks to \eqref{V=esssupYV}.
\end{proof}

\appendix 
 
\section{Appendix}

\subsection{Uniform integrability under $\cQ_L(\PP)$}
Here, we show that the space of progressive measurable c\`adl\`ag processes which belong to class $\cD(\PP)$ is complete under the norm 
 $$ \|Y\|_{\cD(\PP)}:=\sup_{\tau\in\cT_{0,T}}\cE^\PP[|Y_\tau|]. $$

First of all, we proof an equivalent characterization of the concept of uniform integrability. 
\begin{proposition}  \label{AppProp1}
  A family $\{X_t\}_{t\in \TT}$ of random variables, where $\TT$ is an arbitrary index set, is uniformly integrable under $\cQ_L(\PP)$, i.e., 
    \begin{equation} \label{AppEq1}
      \lim_{N\to\infty}\sup_{t\in\TT}\cE^{\PP}\big[|X_t|\Ind_{\{|X_t|\geq N\}}\big] = 0, 
    \end{equation}
  if and only if the following two conditions are satisfied:
  \begin{enumerate}[$(a)$]
   \item $\sup_{t\in\TT}\cE^\PP[|X_t|]<\infty$,
   \item For every $\varepsilon>0$, there exists $\delta>0$ such that for any $A\in\cF$ with $\sup_{\QQ\in\cQ_L(\PP)}\QQ[A]<\delta$ we have 
           $$ \sup_{t\in\TT}\cE^{\PP}[|X_t|\Ind_A]<\varepsilon. $$
          
  \end{enumerate}
\end{proposition}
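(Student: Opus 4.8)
The plan is to regard $\cE^{\PP}[\cdot]=\sup_{\QQ\in\cQ_L(\PP)}\EE^{\QQ}[\cdot]$ as a sublinear expectation and to transcribe the classical Dunford--Pettis/de la Vall\'ee-Poussin argument, exploiting only three elementary properties that $\cE^{\PP}$ inherits from being a supremum of linear expectations: monotonicity, subadditivity, and positive homogeneity. I will also use repeatedly the identity $\cE^{\PP}[\Ind_A]=\sup_{\QQ\in\cQ_L(\PP)}\QQ[A]$ for $A\in\cF$, which is immediate from the definition. The condition $(b)$ is precisely the replacement, in the nonlinear setting, of the classical uniform absolute-continuity condition, strengthened so that the smallness of $\cE^{\PP}[|X_t|\Ind_A]$ is uniform both in $t$ and in the underlying measure $\QQ$.

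First I would prove the implication \eqref{AppEq1}$\Rightarrow(a),(b)$. For $(a)$, fix $N$ with $\sup_{t}\cE^{\PP}[|X_t|\Ind_{\{|X_t|\geq N\}}]\leq 1$, which exists by \eqref{AppEq1}; then subadditivity and monotonicity give $\cE^{\PP}[|X_t|]\leq \cE^{\PP}[|X_t|\Ind_{\{|X_t|\geq N\}}]+N\leq 1+N$ for every $t$, whence $\sup_t\cE^{\PP}[|X_t|]<\infty$. For $(b)$, given $\varepsilon>0$ I would choose $N$ so that $\sup_t\cE^{\PP}[|X_t|\Ind_{\{|X_t|\geq N\}}]<\varepsilon/2$, and then for any $A\in\cF$ split $|X_t|\Ind_A=|X_t|\Ind_A\Ind_{\{|X_t|\geq N\}}+|X_t|\Ind_A\Ind_{\{|X_t|<N\}}$ and bound, using subadditivity, monotonicity and $\cE^{\PP}[\Ind_A]=\sup_{\QQ}\QQ[A]$,
\[
 \cE^{\PP}[|X_t|\Ind_A]\;\leq\;\cE^{\PP}\big[|X_t|\Ind_{\{|X_t|\geq N\}}\big]+N\,\sup_{\QQ\in\cQ_L(\PP)}\QQ[A].
\]
Taking $\delta:=\varepsilon/(2N)$ then yields $\sup_t\cE^{\PP}[|X_t|\Ind_A]<\varepsilon$ whenever $\sup_{\QQ}\QQ[A]<\delta$.

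For the reverse implication $(a),(b)\Rightarrow$\eqref{AppEq1}, the key point is that condition $(b)$ may be applied separately, for each $t$, to the set $A_t:=\{|X_t|\geq N\}$. Writing $M:=\sup_{t}\cE^{\PP}[|X_t|]<\infty$ by $(a)$, Markov's inequality under each single (linear) $\QQ\in\cQ_L(\PP)$ gives $\QQ[|X_t|\geq N]\leq N^{-1}\EE^{\QQ}[|X_t|]\leq N^{-1}\cE^{\PP}[|X_t|]\leq M/N$, and hence $\sup_{\QQ\in\cQ_L(\PP)}\QQ[A_t]\leq M/N$ uniformly in $t$. Given $\varepsilon>0$, take the $\delta>0$ furnished by $(b)$ and choose $N>M/\delta$; then $\sup_{\QQ}\QQ[A_t]<\delta$ for every $t$, so $(b)$ yields $\cE^{\PP}[|X_t|\Ind_{\{|X_t|\geq N\}}]<\varepsilon$, and taking the supremum over $t$ gives $\sup_t\cE^{\PP}[|X_t|\Ind_{\{|X_t|\geq N\}}]\leq\varepsilon$. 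As $\varepsilon>0$ is arbitrary, \eqref{AppEq1} follows.

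I do not expect a genuine obstacle here: the argument is a faithful adaptation of the linear case. The only point requiring care is that $\cE^{\PP}$ is not induced by a single measure, so no Radon--Nikodym/absolute-continuity reasoning is available; all manipulations must go through the sublinearity properties and the pointwise application of Markov's inequality under each fixed $\QQ$ before taking the supremum. It is exactly because $(b)$ is posed as a uniform bound over all $A$ with $\sup_{\QQ}\QQ[A]$ small (rather than a per-measure absolute-continuity statement) that the $t$-dependence of the sets $\{|X_t|\geq N\}$ causes no difficulty.
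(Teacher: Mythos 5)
Your proof is correct and follows essentially the same argument as the paper's: the forward direction uses the identical splitting $|X_t|\Ind_A \leq |X_t|\Ind_{\{|X_t|\geq N\}} + N\Ind_A$ with $\delta = \varepsilon/(2N)$, and the converse uses Markov's inequality under each fixed $\QQ\in\cQ_L(\PP)$ followed by applying $(b)$ to the sets $A_t=\{|X_t|\geq N\}$. No gaps; your explicit remark that $(b)$ is applied separately for each $t$ to the $t$-dependent set $A_t$ is exactly the (implicit) mechanism in the paper's proof as well.
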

 
\begin{proof}
 Clearly, \eqref{AppEq1} implies $(a)$. 
 Next, let $A\in\cF$ and write $A_t:=\{|X_t|\geq N\}$. Then, we have 
   \begin{align*}
     \sup_{t\in\TT}\cE^\PP[|X_t|\Ind_A] &= \sup_{t\in\TT}\cE^\PP\big[|X_t|\big(\Ind_{A\cap A_t}+\Ind_{A\backslash A_t}\big)\big] \\
       & \leq \sup_{t\in\TT}\cE^\PP\big[|X_t|\Ind_{A_t}\big] + N\sup_{t\in\TT}\cE^\PP[\Ind_{A}] \\ 
       & \leq \sup_{t\in\TT}\cE^\PP\big[|X_t|\Ind_{A_t}\big] + N\sup_{\QQ\in\cQ_L(\PP)}\QQ[A]. 
   \end{align*}
 Given $\varepsilon>0$, by \eqref{AppEq1}, we may find $N$ such that $ \sup_{t\in\TT}\cE^\PP\big[|X_t|\Ind_{A_t}\big]<\frac{\varepsilon}{2}. $
 Therefore, $(b)$ follows by setting $\delta=\frac{\varepsilon}{2N}$. 
 
 Conversely, suppose that $(a)$ and $(b)$ hold. 
 Then, by Markov inequality, we obtain that 
  \begin{align*}
   \sup_{t\in\TT}\sup_{\QQ\in\cQ_L(\PP)}\QQ[|X_t|\geq N] \leq \sup_{t\in\TT}\frac{1}{N}\cE^\PP[|X_t|] \leq \frac{M}{N}, 
  \end{align*}
  where $M$ is the bound indicated in $(a)$. 
 Hence, if $N\geq \frac{M}{\delta}$, then $\sup_{\QQ\in\cQ_L(\PP)}\QQ[A_t]<\delta$, for each $t\in\TT$. 
 By $(b)$, we have for each $t\in\TT$ that
   $ \cE^\PP[|X_t|\Ind_{A_t}]<\varepsilon. $
 Thus, \eqref{AppEq1} follows. 
\end{proof}

Now, we show the completeness of $\cD(\PP)$. 

\begin{theorem}  \label{AppThm1}
 The space $\cD(\PP)$ is complete with respect to the norm $\|\cdot\|_{\cD(\PP)}$.
\end{theorem}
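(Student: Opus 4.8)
The plan is to mirror the classical completeness of the linear class $\DD(\PP)$ (Dellacherie--Meyer), replacing Doob's maximal inequality by its nonlinear counterpart in the spirit of Lemma \ref{supDoob}, and to close the argument with the uniform-integrability characterisation of Proposition \ref{AppProp1}. The crucial preliminary observation is that every $\QQ^\lambda\in\cQ_{L}(\PP)$ is equivalent to $\PP$, since its density $G^\lambda_T$ is strictly positive $\PP$-a.s.; hence a $\PP$-null event is $\QQ$-null for all $\QQ\in\cQ_L(\PP)$, so ``$\PP$-a.s.'' and ``$\cQ_L(\PP)$-q.s.'' coincide. This lets me run Borel--Cantelli under the single measure $\PP$ and transfer the conclusion to all of $\cQ_L(\PP)$.

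First I would prove a weak-type maximal inequality: for any nonnegative c\`adl\`ag process $X$ and $\lambda>0$,
$$\sup_{\QQ\in\cQ_L(\PP)}\QQ\big[\textstyle\sup_{0\le t\le T}X_t>\lambda\big]\;\le\;\frac{1}{\lambda}\|X\|_{\cD(\PP)}.$$
This follows exactly as in Lemma \ref{ELbetaleqsupEL}: with $\tau_\lambda:=\inf\{t:X_t\ge\lambda\}$ one has $\{\sup_tX_t>\lambda\}\subseteq\{\tau_\lambda\le T\}$ and $X_{\tau_\lambda\wedge T}\ge\lambda$ there, so for each $\QQ$, $\lambda\QQ[\tau_\lambda\le T]\le\EE^\QQ[X_{\tau_\lambda\wedge T}]\le\cE^\PP[X_{\tau_\lambda\wedge T}]\le\|X\|_{\cD(\PP)}$, the bound being independent of $\QQ$. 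No concatenation is needed here, since $X$ is not assumed to be a submartingale; I simply dominate by the $\cD(\PP)$-norm.

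Given a Cauchy sequence $\{Y^n\}$ in $\cD(\PP)$, I would extract a subsequence with $\|Y^{n_{k+1}}-Y^{n_k}\|_{\cD(\PP)}\le 4^{-k}$. Applying the maximal inequality to $X=|Y^{n_{k+1}}-Y^{n_k}|$ with $\lambda=2^{-k}$ gives $\PP[\sup_t|Y^{n_{k+1}}_t-Y^{n_k}_t|>2^{-k}]\le 2^{-k}$ (using $\PP\in\cQ_L(\PP)$). By Borel--Cantelli under $\PP$, the series $\sum_k\sup_t|Y^{n_{k+1}}_t-Y^{n_k}_t|$ converges $\PP$-a.s., so $\{Y^{n_k}\}$ converges uniformly on $[0,T]$, $\PP$-a.s., hence $\cQ_L(\PP)$-q.s. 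Define $Y$ as this uniform limit (and $0$ on the exceptional set); as a q.s.\ uniform limit of c\`adl\`ag, progressively measurable processes it is again c\`adl\`ag and progressively measurable. To pass to the full sequence, fix $\tau\in\cT_{0,T}$ and $\QQ\in\cQ_L(\PP)$; since $Y^{n_k}_\tau\to Y_\tau$ $\QQ$-a.s., Fatou's lemma gives $\EE^\QQ[|Y^n_\tau-Y_\tau|]\le\liminf_k\EE^\QQ[|Y^n_\tau-Y^{n_k}_\tau|]\le\liminf_k\|Y^n-Y^{n_k}\|_{\cD(\PP)}$, and taking the supremum over $\tau,\QQ$ and sending $n\to\infty$ along the Cauchy property yields $\|Y^n-Y\|_{\cD(\PP)}\to 0$.

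The hard part will be the final step: verifying that $Y$ genuinely belongs to $\cD(\PP)$, i.e.\ that $\{Y_\tau:\tau\in\cT_{0,T}\}$ is uniformly integrable under $\cQ_L(\PP)$, since finiteness of $\|Y\|_{\cD(\PP)}$ alone is \emph{not} sufficient (as noted right after the definition of $\cD(\PP)$). I would check conditions $(a)$ and $(b)$ of Proposition \ref{AppProp1}. Condition $(a)$ is immediate from $\|Y\|_{\cD(\PP)}\le\|Y-Y^n\|_{\cD(\PP)}+\|Y^n\|_{\cD(\PP)}<\infty$. For $(b)$, given $\varepsilon>0$ choose $n$ with $\|Y-Y^n\|_{\cD(\PP)}<\varepsilon/2$; then for any $A\in\cF$,
$$\sup_\tau\cE^\PP[|Y_\tau|\Ind_A]\;\le\;\sup_\tau\cE^\PP[|Y_\tau-Y^n_\tau|]+\sup_\tau\cE^\PP[|Y^n_\tau|\Ind_A]\;\le\;\tfrac{\varepsilon}{2}+\sup_\tau\cE^\PP[|Y^n_\tau|\Ind_A].$$
Since $Y^n\in\cD(\PP)$ satisfies condition $(b)$ of Proposition \ref{AppProp1}, there is $\delta>0$ such that $\sup_\QQ\QQ[A]<\delta$ forces the last supremum below $\varepsilon/2$, whence $\sup_\tau\cE^\PP[|Y_\tau|\Ind_A]<\varepsilon$ and $Y\in\cD(\PP)$. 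The essential point of the whole argument is exactly this transfer of the uniform-integrability modulus from the approximants to the limit, which is why Proposition \ref{AppProp1} (and its condition $(b)$) is indispensable rather than a mere norm bound.
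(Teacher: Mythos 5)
Your proof is correct, and it reaches the same destination as the paper's, but by a more self-contained route in the construction of the limit. The paper's proof is shorter at that point: it observes that a Cauchy sequence for $\|\cdot\|_{\cD(\PP)}$ is in particular Cauchy for $\|\cdot\|_{\DD(\PP)}$ (since $\PP\in\cQ_L(\PP)$), and then simply cites the classical Dellacherie--Meyer completeness theorem \cite[VI Theorem 22, Page 83]{DM82} to produce a c\`adl\`ag limit $X$ with $\PP$-a.s.\ uniform convergence, transferring this to every $\QQ\in\cQ_L(\PP)$ by equivalence of measures exactly as you do. Your maximal inequality plus subsequence extraction plus Borel--Cantelli under $\PP$ is, in effect, an inline re-proof of that classical result; what it buys is independence from the external reference and an explicitly quantitative weak-type bound stated directly under the nonlinear expectation, at the cost of length. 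Your Fatou argument showing $\|Y^n-Y\|_{\cD(\PP)}\to 0$ for the full sequence is also a welcome filling-in of a step the paper passes over quickly (it merely asserts the existence of $N$ with $\|X-X^N\|_{\cD(\PP)}<\varepsilon$ from the Cauchy property). From that point on, the two proofs coincide: finiteness of the norm gives condition $(a)$ of Proposition \ref{AppProp1}, and the uniform-integrability modulus is transferred from an approximant $Y^n$ to the limit via condition $(b)$ and the triangle inequality --- you correctly identify this transfer as the essential point that a mere norm bound cannot replace. One shared technicality: your hitting time $\tau_\lambda=\inf\{t:X_t\ge\lambda\}$ is treated as an element of $\cT_{0,T}$, i.e.\ a stopping time for the raw filtration $\FF$; this is the same convention the paper adopts in Lemmas \ref{ELbetaleqsupEL} and \ref{supDoob}, so it is not a gap relative to the paper's own standard of rigour.
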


\begin{proof}
  Let $\{X^n\}_{n\in\NN}\subseteq \cD(\PP)$ be a Cauchy sequence with respect to $\|\cdot\|_{\cD(\PP)}$. 
  In particular, this is a Cauchy sequence with respect to $\|\cdot\|_{\DD(\PP)}$. 
  By \cite[VI Theorem 22, Page 83]{DM82}, there exists a c\`adl\`ag process $X$ such that 
  $$ \lim_{n\to\infty}\sup_{t\in[0,T]}(X_t-X_t^n) = 0, \quad \PP\mbox{-a.s.}, $$
  and $\|X\|_{\DD(\PP)}<\infty$.
  Since $\QQ\sim\PP$ for each $\QQ\in\cQ_L(\PP)$, the above convergence holds also for each $\QQ\in\cQ_L(\PP)$. 
  As $\{X^n\}_{n\in\NN}$ is a Cauchy with respect to $\|\cdot\|_{\cD(\PP)}$, for each $\varepsilon>0$ there exists a $N\in\NN$, such that $\|X-X^N\|_{\cD(\PP)}<\varepsilon$, and by triangle inequality we obtain
    $$ \|X\|_{\cD(\PP)} \leq \big\|X-X^N\big\|_{\cD(\PP)} + \big\|X^N\big\|_{\cD(\PP)} < \infty. $$
    
  To show the uniform integrability, it suffices to show $(b)$ in Proposition \ref{AppProp1}. 
  For each $\varepsilon>0$, there exist $N\in\NN$ such that 
     $ \sup_{\tau\in\cT_{0,T}}\cE^\PP\big[\big|X_\tau - X_\tau^N\big|\big]<\frac{\varepsilon}{2}, $
   and $\delta>0$ such that 
     $ \sup_{\tau\in\cT_{0,T}}\cE^\PP\big[\big|X_\tau^N\big|\Ind_A\big]<\frac{\varepsilon}{2}, $
     for each $\sup_{\QQ\in\cQ_L(\PP)}\QQ[A]<\delta$. 
  Therefore, by triangle inequality, we obtain that
    $$ \sup_{\tau\in\cT_{0,T}}\cE^\PP[|X_\tau|\Ind_A] \leq \sup_{\tau\in\cT_{0,T}}\cE^\PP\big[\big|X_\tau - X_\tau^N\big|\big] + \sup_{\tau\in\cT_{0,T}}\cE^\PP\big[\big|X_\tau^N\big|\Ind_A\big]<\varepsilon, $$
    and the assertion follows. 
\end{proof}

\bibliography{BSDE} 

\bibliographystyle{abbrv}

\end{document}